\numberwithin{equation}{section}
\newtheorem{theorem}[equation]{Theorem}
\newtheorem{assumption}[equation]{Assumption}
\newtheorem{lemma}[equation]{Lemma}
\newtheorem{proposition}[equation]{Proposition}
\newtheorem{definition}[equation]{Definition}
\newtheorem*{corollary*}{Corollary}
\theoremstyle{definition}
\newtheorem{remark}[equation]{Remark}
\def\Gal{\mathrm{Gal}}
\def\GL{\mathrm{GL}}
\def\PGL{\mathrm{PGL}}
\def\GSp{\mathrm{GSp}}
\def\PGSp{\mathrm{PGSp}}
\def\Sp{\mathrm{Sp}}
\def\GU{\mathrm{GU}}
\def\GO{\mathrm{GO}}
\def\Spec{\mathrm{Spec}}
\def\min{\mathrm{min}}
\def\CNL{\mathrm{CNL}}
\def\Hom{\mathrm{Hom}}
\def\End{\mathrm{End}}
\def\ART{\mathrm{ART}}
\def\lr{\mathrm{lr}}
\def\min{\mathrm{min}}
\def\mix{\mathrm{mix}}
\def\Iw{\mathrm{Iw}}
\def\Spf{\mathrm{Spf}}
\def\univ{\mathrm{univ}}
\def\Fil{\mathrm{Fil}}
\def\Res{\mathrm{Res}}
\def\calD{\mathcal{D}}
\def\calF{\mathcal{F}}
\def\calG{\mathcal{G}}
\def\calL{\mathcal{L}}
\def\calM{\mathcal{M}}
\def\calO{\mathcal{O}}
\def\calS{\mathcal{S}}
\def\calT{\mathcal{T}}
\def\fraca{\mathfrak{a}}
\def\fracm{\mathfrak{m}}
\def\fracn{\mathfrak{n}}
\def\CC{\mathbb{C}}
\def\FF{\mathbb{F}}
\def\GG{\mathbb{G}}
\def\bfG{\mathbf{G}}
\def\QQ{\mathbb{Q}}
\def\TT{\mathbb{T}}
\def\bfT{\mathbf{T}}
\def\ZZ{\mathbb{Z}}
\def\rma{\mathrm{a}}
\def\rmb{\mathrm{b}}
\def\rmc{\mathrm{c}}
\def\rmu{\mathrm{u}}
\def\rmA{\mathrm{A}}
\def\rmB{\mathrm{B}}
\def\rmF{\mathrm{F}}
\def\rmG{\mathrm{G}}
\def\rmH{\mathrm{H}}
\def\rmI{\mathrm{I}}
\def\rmJ{\mathrm{J}}
\def\rmK{\mathrm{K}}
\def\rmM{\mathrm{M}}
\def\rmN{\mathrm{N}}
\def\rmP{\mathrm{P}}
\def\rmQ{\mathrm{Q}}
\def\rmW{\mathrm{W}}
\def\rmR{\mathrm{R}}
\def\rmS{\mathrm{S}}
\def\rmX{\mathrm{X}}
\def\rmY{\mathrm{Y}}
\def\rmT{\mathrm{T}}
\def\rmZ{\mathrm{Z}}
\begin{document}
\title[Deformation for rigid Galois representations]
{Deformation of rigid Galois representations and cohomology of certain quaternionic unitary Shimura variety}

\begin{abstract}
In this article, we use deformation theory of Galois representations valued in the symplectic group of degree four to prove a freeness result for the cohomology of certain quaternionic unitary Shimura variety over the universal deformation ring for certain type of residual representation satisfying a property called rigidity. This result plays an important role in the proof of the arithmetic level raising theorem for the symplectic similitude group of degree four over the field of rational numbers by the author. 
\end{abstract}

\author{Haining Wang }
\address{\parbox{\linewidth}{Haining Wang\\Shanghai Center for Mathematical Sciences,\\ Fudan University,\\No.2005 Songhu Road,\\Shanghai, 200438, China.~ }}
\email{wanghaining1121@outlook.com}

\maketitle
\tableofcontents
\section{Introduction}
Let $\rmF$ be a totally real field. In this article, we study deformations of Galois representations of $\rmF$ valued in $\GSp_{4}$. The main purpose of this article is to deduce a freeness result for the cohomology of certain quaternionic unitary Shimura variety using the Taylor-Wiles patching method. This quaternionic unitary Shimura variety is closely related to the classical Hilbert-Siegel modular variety. Their relation is similar to that of classical modular curves with Shimura curves associated to indefinite quaternion algebras. The deformation problem we imposed on the residual representation is tailored to have applications in proving the arithmetic level raising theorem in \cite{Wang}. In particular, following the terminology of \cite{LTXZZa}, we show that if the residual Galois representation satisfies certain property called rigidity, then we can prove an $\rmR=\rmT$ type theorem as well as the disired freeness result for the cohomology of certain quaternionic unitary Shimura varieties. This is our first result. Let $\pi$ be a cuspidal automorphic representation of $\GSp_{4}(\mathbb{A}_{\rmF})$ of general type. Then one can associate to it a compatible family of Galois representations of $\rmF$ valued in $\GSp_{4}$. Suppose that the residual Galois representation satisfies a large image assumption as well as  suitable hypothesis, we can show that these residual Galois representations are indeed rigid if the residual characteristic of the coefficient field is large enough. This is our second main result. 

We now state our two main results more formally in a combined fashion. Let $\Pi$ be a cuspidal automorphic representation of $\GL_{4}(\mathbb{A}_{\rmF})$, then we say that $\Pi$ is of symplectic type with similitude character $\psi$ if the partial $L$-function $L^{\rmS}(s, \Pi, \wedge^{2}\otimes \psi^{-1})$ has a pole at $s=1$ where $\rmS$ is any finite set of places of $\rmF$. Following Arthur \cite{}, we say a cuspidal automorphic representation $\pi$ is of general type, if there is a cuspidal automorphic representation $\Pi$ of $\GL_{4}(\mathbb{A}_{\rmF})$ such that $\Pi$ is of symplectic type with similitude character $\psi$ and that for each place $v$ of $\rmF$, the $L$-parameter $\mathrm{rec}_{\GL_{4}}(\Pi_{v})$ is obtained from composing $\mathrm{rec}_{\mathrm{GT}}(\pi_{v})$ with the embedding $\GSp_{4}\hookrightarrow \GL_{4}$. In this case $\Pi $ is called a transfer of $\pi$. We will only consider those $\pi$ of general type in this article. Suppose we are given the following datum
\begin{itemize}
\item $l\geq 3$ and an isomorphism $\iota_{l}:\CC\cong\overline{\QQ}_{l}$;
\item a cuspidal automorphic representation $\pi$ of general type with weight $(k_{v}, l_{v})_{v\mid \infty}$ and trivial central character; This means that the Harish-Chandra parameter of $\pi$ is given by $(k_{v}-1, l_{v}-2, 0)_{v\mid\infty}$;
\item a number field $E\subset \CC$ called strong coefficient field of $\pi$, see Definition \ref{strong-field}, such that one has a  Galois representation
\begin{equation*}
\rho_{\pi,\lambda}:\rmG_{\rmF}\rightarrow \GSp_{4}(E_{\lambda})
\end{equation*}
attached to $\pi$, where $\lambda$ is a prime of $E$ induced by the isomorphism $\iota_{l}:\CC\cong\overline{\QQ}_{l}$. We will write $\calO$ as the valuation ring of the local field $E_{\lambda}$. We assume that  the similitude character of $\rho_{\pi,\lambda}$ is given by $\epsilon^{-3}_{l}$; 
\item  a finite set $\Sigma$ of non-archimedean places of $\rmF$ such that for every non-archimedean place $v$ of $\rmF$ not in $\Sigma$, $\pi_{v}$ is unramified.
\end{itemize}

We will assume that $\rho_{\pi,\lambda}$ is residually irreducible, then we have the the residual representation $\overline{\rho}_{\pi,\lambda}$ well-defined up to isomorphism. We consider the deformation problem for $\overline{\rho}_{\pi,\lambda}$ such that 
\begin{itemize}
\item $\Sigma$ admits a partition $\Sigma=\Sigma_{\lr}\cup\Sigma_{\min}\cup\Sigma_{l}$;
\item for places in $\Sigma_{\min}$, we classify the minimal deformations;
\item for places in $\Sigma_{\lr}$, $\pi_{v}$ is of type $\rmI\rmI\rma$ in the classification of Sally-Tadic \cite{ST-classification} and Schmidt \cite{Sch-Iwahori}, we classify deformations with certain prescribed ramifications;
\item for places in $\Sigma_{l}$, we classify deformations that are regular Fontaine-Laffaille crystalline.
\end{itemize}
Then we obtain a universal deformation ring $\rmR^{\univ}_{\calS}$ for this deformation problem. Let $\TT^{\Sigma}$ be the abstract Hecke algebra of $\GSp_{4}$ unramified away from $\Sigma$. Then one can associate to $\pi$ and the strong coefficient field $E$, a homomorphism $\phi_{\pi,\lambda}: \TT^{\Sigma}\rightarrow \calO$. Then we say the ideal $\fracm=\ker(\TT^{\Sigma}\xrightarrow{\phi_{\pi,\lambda}} \calO\rightarrow \calO/\lambda)$ is the maximal ideal corresponding to the residual Galois representation $\overline{\rho}_{\pi, \lambda}$. Let
$\rmB$ be a quaternion algebra over $\rmF$ and we consider two cases.
\begin{itemize}
\item The quaternion algebra $\rmB$ is totally definite that is $\rmB$ is ramified at all infinite places $\Sigma_{\infty}$. We will refer to this case as the definite case.  Let $d(\rmB)=0$ in this case.
\item The quaternion algebra $\rmB$ is ramified at all infinite places $\Sigma_{\infty}$ except for one infinite place $v^{\mathrm{split}}_{\infty}$.  We will refer to this case as the indefinite case.  Let $d(\rmB)=3$ in this case.

\item Let $\Sigma_{\rmB}$ be the places of $\rmF$ such that the quaternion algebra $\rmB$ is ramified. The we will always assume that $\Sigma_{\rmB}$ is contained in $\Sigma_{\lr}$.
\end{itemize}
We have the quaternionic unitary group of degree two over $\rmF$ denoted by $\GU_{2}(\rmB)$ and we define $\bfG(\rmB)=\Res_{\rmF/\QQ}\GU_{2}(\rmB)$. We can associate the following datum to $\bfG(\rmB)$:
\begin{itemize}
\item $\xi\in (\ZZ^{3}_{\geq 0})^{\Sigma_{\infty}}$ gives a highest weight of $\bfG(\rmB)(\CC)\cong \prod\limits_{v\mid \infty}\GSp_{4}(\CC)$; We assume that $\xi=(\xi_{1, v}, \xi_{2, v}, 0)_{v\mid \infty}$ with $\xi_{1, v}\geq\xi_{2, v}\geq 0$ and $\xi_{1, v}\equiv \xi_{2, v} \mod 2$ such that $\xi$ is compatible with the weight of $\pi$ in the sense that $\xi=(k_{v}-3, l_{v}-3, 0)_{v\mid \infty}$.
\item  a neat open compact subgroup $\rmK$ of $\GU_{2}(\rmB)(\mathbb{A}_{\rmF})$ of the form 
\begin{equation*}
\rmK=\prod\limits_{v\in\Sigma_{\min}\cup\Sigma_{\lr}}\rmK_{v}\times\prod\limits_{v\not\in\Sigma_{\min}\cup\Sigma_{\lr}}\GSp_{4}(\calO_{\rmF_{v}})
\end{equation*}
where $\rmK_{v}$ is the paramodular subgroup of $\GU_{2}(\rmB_{v})$ or $\GSp_{4}(\rmF_{v})$ for $v\in \Sigma_{\lr}$, $\rmK_{v}$ is contained in the pro-$v$ Iwahori subgroup of $\GSp_{4}(\rmF_{v})$ for $v\in \Sigma_{\min}$.
\end{itemize}

This datum defines a Shimura variety resp. a Shimura set $\mathrm{Sh}(\rmB, \rmK)$ in the indefinite case resp. in the definite case. And we also have an $\calO$-local system $\calL_{\xi}$ on $\mathrm{Sh}(\rmB, \rmK)$.  The following is our main result.

\begin{theorem}
Let $\pi$ be a cuspidal automorphic representation of $\GSp_{4}(\mathbb{A}_{\rmF})$ of general type with trivial central character as above, suppose the following assumptions hold.
\begin{itemize}
\item[(D0)] $l\geq 3$ is a prime unramified in $\rmF$. 
\item[(D1)] The weight $(k_{v}, l_{v})_{v\mid\infty}$ of $\pi$ satisfies $l_{v}+k_{v}\leq l+1$;
\item[(D2)] The image $\overline{\rho}_{\pi,\lambda}(\rmG_{\rmF})$ contains $\GSp_{4}(\FF_{l})$;
\item[(D3)] $\overline{\rho}_{\pi, \lambda}$ is rigid for $(\Sigma_{\mathrm{min}}, \Sigma_{\mathrm{lr}})$;
\item[(D4)] Suppose that $\rmB$ is indefinite. For every finite set $\Sigma^{\prime}$ of nonarchimedean places of $\rmF$ containing $\Sigma$, and every open compact subgroup $\rmK^{\prime}\subset \rmK$ satisfying $\rmK^{\prime}_{v}=\rmK_{v}$ for $v\not\in \Sigma^{\prime}$, we have 
\begin{equation*}
\rmH^{d}(\mathrm{Sh}(\rmB, \rmK^{\prime}), \calL_{\xi}\otimes_{\calO}k)_{\fracm^{\prime}}=0
\end{equation*}
for every $d\neq 3$ and $\fracm^{\prime}=\fracm\cap \TT^{\Sigma^{\prime}}$.
\end{itemize}
Let $\mathbf{T}$ be the image of $\TT^{\Sigma}$ in $\mathrm{End}_{\calO}(\rmH^{d(\rmB)}(\mathrm{Sh}(\rmB, \rmK), \calL_{\xi}))$ and suppose that $\mathbf{T}_{\fracm}$ is non-zero. Then we have the following conclusions.
\begin{enumerate}
\item There is an isomorphism of complete intersection rings: $\rmR^{\univ}_{\calS}\cong\mathbf{T}_{\fracm}$.
\item The localized cohomology $\rmH^{d(\rmB)}(\mathrm{Sh}(\rmB, \rmK), \calL_{\xi})_{\fracm}$ is finite free module over $\mathbf{T}_{\fracm}$ and hence $\rmH^{d(\rmB)}(\mathrm{Sh}(\rmB, \rmK), \calL_{\xi})_{\fracm}$ is also finite free over $\rmR^{\univ}_{\calS}$.
\end{enumerate}
Suppose that  the image $\overline{\rho}_{\pi,\lambda}(\rmG_{\rmF})$ contains $\GSp_{4}(\FF_{l})$ for sufficiently large $l$, then the representation $\overline{\rho}_{\pi, \lambda}$ is indeed rigid for sufficiently large $l$.
\end{theorem}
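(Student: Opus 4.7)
The overall strategy is the Taylor--Wiles--Kisin patching method applied to the deformation problem $\calS$ for $\GSp_4$-valued representations; this yields parts (1) and (2) simultaneously, while the final rigidity statement is handled by a separate Galois-cohomological argument using the large residual image.

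For the patching step I would first use (D2) together with a Chebotarev argument to produce, for each $n \geq 1$, a set $Q_n$ of Taylor--Wiles primes $v$ with $\mathrm{N}v \equiv 1 \pmod{l^n}$ and $\overline{\rho}_{\pi,\lambda}(\Frob_v)$ regular semisimple in $\GSp_4(\FF_l)$; the image containing $\GSp_4(\FF_l)$ ensures enough such primes to annihilate the dual Selmer group carrying the obstructions. I would then enlarge $\calS$ to an auxiliary problem $\calS_{Q_n}$ allowing tame Levi-type ramification at $Q_n$. By (D3), the local deformation rings at $\Sigma_{\min}\cup\Sigma_{\lr}$ are formally smooth of the expected dimension, and by (D1) the same holds at places above $l$ in the Fontaine--Laffaille range. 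On the automorphic side I would set $M_{Q_n} = \rmH^{d(\rmB)}(\mathrm{Sh}(\rmB,\rmK_{Q_n}), \calL_\xi)_{\fracm_{Q_n}}$: assumption (D4) concentrates the cohomology in degree $d(\rmB) = 3$ in the indefinite case, and $d(\rmB) = 0$ makes the concentration trivial in the definite case. The standard patching machine then produces $M_\infty$ over $\rmR_\infty = \rmR^{\univ}_\calS[[x_1, \ldots, x_g]]$, finite and free over $S_\infty = \calO[[y_1, \ldots, y_g]]$. A dimension count balancing the smooth local pieces against the number of Taylor--Wiles primes yields $\dim \rmR_\infty = \dim S_\infty$, so $M_\infty$ is $\rmR_\infty$-free and $\rmR_\infty$ is a complete intersection. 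Descending gives the complete intersection isomorphism $\rmR^{\univ}_\calS \cong \mathbf{T}_\fracm$ and the freeness of $\rmH^{d(\rmB)}(\mathrm{Sh}(\rmB, \rmK), \calL_\xi)_\fracm$ over $\mathbf{T}_\fracm$; the hypothesis $\mathbf{T}_\fracm \neq 0$ ensures that the patched module is nonzero, so the patching argument is not vacuous.

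For the rigidity claim at the end, I would show that for $l$ sufficiently large both the dual Selmer group and the local obstruction spaces vanish. The key global input is that once the image contains $\GSp_4(\FF_l)$ for $l$ in the banal range, inflation--restriction combined with the representation theory of $\GSp_4(\FF_l)$ on the adjoint $\mathrm{ad}^{0}\overline{\rho}_{\pi,\lambda}$ forces $\rmH^1(\rmG_\rmF, \mathrm{ad}^{0}\overline{\rho}_{\pi,\lambda})$ to have exactly the expected automorphic tangent dimension. Local vanishing at $\Sigma_{\min}$ is standard. At places in $\Sigma_{\lr}$ where $\pi_v$ is of type $\rmI\rmI\rma$, one exploits the fact that for large residual characteristic the wild inertia factors through a tame quotient, so the unipotent monodromy filtration cuts out a local deformation ring that is formally smooth of the expected dimension --- which is precisely the rigidity condition. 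The main obstacle in executing this plan is the local analysis at $\Sigma_{\lr}$: matching the paramodular level structure to the prescribed type $\rmI\rmI\rma$ Galois ramification on the nose, and pinning down the correct tangent-space dimension, requires a careful interplay between Schmidt's classification of paramodular newvectors and $\GSp_4$ deformation theory, and this is the delicate technical input on which the whole argument rests.
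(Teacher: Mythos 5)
Your patching argument for parts (1) and (2) follows essentially the same route as the paper: Taylor--Wiles data killing the dual Selmer group (the paper quotes \cite[Corollary 7.6.3]{BCGP}), concentration of cohomology in degree $d(\rmB)$ via (D4), the patching lemma of \cite[Lemma 6.10]{Tho12}, and an Auslander--Buchsbaum dimension count. One correction, though: you write $\rmR_\infty = \rmR^{\univ}_{\calS}[[x_1,\dots,x_g]]$. As written this makes the argument circular --- the whole point is that $\rmR_\infty$ is a power series ring over the \emph{completed tensor product of local} deformation rings, $\rmR_\infty=\rmR^{\rmS,\mathrm{loc}}_{\calS}[[\rmZ_1,\dots,\rmZ_g]]$, whose regularity and dimension ($g+1+10|\rmS|+4[\rmF:\QQ]=2q+11|\rmS|$, using Propositions \ref{min-dim}, \ref{mix-dim}, \ref{FL-dim}) are known \emph{a priori}; $\rmR^{\univ}_{\calS}$ is only recovered at the end as $\rmR_\infty/\fraca_\infty$. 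Likewise $S_\infty=\calO[\calT][[\rmY_1,\dots,\rmY_{2q}]]$ has $2q+11|\rmS|$ variables, not $g$.

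The genuine gap is in your treatment of the final rigidity assertion. Rigidity in the sense of Definition \ref{rigid} is \emph{not} an unobstructedness statement, so vanishing of dual Selmer groups and local obstruction spaces neither implies nor is implied by it. Condition (1) there demands that \emph{every} lifting of $\overline{\rho}_{\pi,\lambda}|_{\rmG_{\rmF_v}}$ for $v\in\Sigma_{\min}$ be minimally ramified, i.e.\ that no other irreducible component of $\Spec\,\rmR^{\square}_v[1/l]$ carries liftings relevant to the problem; even for banal $l$ the local lifting ring generally has several components (e.g.\ components with larger monodromy), so no local tangent-space or inflation--restriction computation can rule them out. The paper's argument is irreducibly global: for each choice $\calD_\Sigma$ of irreducible components at the places of $\Sigma$, an automorphy lifting theorem (\cite[Corollary 7.5.1]{GG-companion}) shows $\rmR^{\univ}_{\calS(\calD_\Sigma)}$ is finite over $\calO$ of positive rank, hence produces a cuspidal $\pi(\calD_\Sigma)$ congruent to $\pi$ with bounded level and weight; purity of the local Galois representations shows each $\pi_v$ lies on a unique component, so distinct choices $\calD_\Sigma$ yield distinct automorphic representations; finiteness of the set of such representations together with strong multiplicity one then forces, for $l$ large, the minimal component to be the only one. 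Your proposed local analysis at $\Sigma_{\lr}$ also misidentifies the condition to be checked: rigidity there is condition (2) of Definition \ref{rigid}, a constraint on the Frobenius eigenvalues of $\overline{\rho}_{\pi,\lambda}(\phi_v)$ coming from local--global compatibility for $\pi_v$ of type $\rmI\rmI\rma$, not a smoothness statement about a monodromy-filtered deformation ring (and note the paper's rigidity theorem is in any case proved for the pair $(\Sigma,\emptyset)$).
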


\subsection{Notation}
We will use common notations and conventions in algebraic number theory and algebraic geometry. The cohomology  of schemes appear in this article will be understood as computed over the \'{e}tale sites. 

For a field $\rmK$, we denote by $\overline{\rmK}$ a separable closure of $\rmK$ and put $\rmG_{\rmK}:=\Gal(\overline{\rmK}/\rmK)$ the Galois group of $\rmK$. Suppose $\rmK$ is a number field and $v$ is a place of $\rmK$, then $\rmK_{v}$ is the completion of $\rmK$ at $v$ with valuation ring $\calO_{v}$ whose maximal ideal is also written as $v$, if no danger of confusion could arise, we also denote by $v$ a fixed uniformizer of $v$. We let $\mathbb{A}_{\rmK}$ be the ring of ad\`{e}les over $\rmK$ and $\mathbb{A}^{(\infty)}_{\rmK}$ be the subring of finite ad\`{e}les.  
Let $\rmK$ be a number field. Let $\Sigma_{\infty}$ be the set of infinity places of $\rmK$.  For each rational prime $p$, let $\Sigma_{p}$ be the set of places of $\mathrm{K}$ over $p$ and $\Sigma_{\mathrm{bad}}$ be the set of ramified places. 

For our convention, let $\GSp_{4}$ be the reductive group over $\ZZ$ defined by 
$\GSp_{4}=\{g\in\GL_{4}: g\rmJ g^{t}=\nu(g)\rmJ\}$ where $\rmJ$ is the antisymmetric matrix given by $\begin{pmatrix}0&s\\-s&0\\ \end{pmatrix}$
for $s=\begin{pmatrix}0&1\\ 1&0 \\ \end{pmatrix}$ and where $\nu$ is the similitude character of $\GSp_{4}$. By definition, we have an exact sequence 
$0\rightarrow \Sp_{4}\rightarrow \GSp_{4}\xrightarrow{\nu}\GG_{m}\rightarrow 0$.

\section{Generalities on deformation theory}

\subsection{Galois deformation rings} We summarizes some more or less familiar notions and results of deformation theory of Galois representations valued in $\GSp_{4}$ over a totally field $\rmF$. Our writing is influenced by the exposition of \cite[\S 7]{BCGP}.
\begin{itemize}

\item Let $\CNL_{\calO}$ be the category of complete Noetherian local $\calO$-algebras with residue field $k$. An object of $\CNL_{\calO}$ is called a $\CNL_{\calO}$-algebra. Let $\ART_{\calO}$ be the category of Artinian local rings over $\calO$ with residue field $k$.

\item We fix $\overline{\rho}: G_{\rmF}\rightarrow \GSp_{4}(k)$ a continuous absolutely irreducible homomorphism and a character $\psi: G_{\rmF}\rightarrow \calO^{\times}$ lifting $\nu\circ \overline{\rho}$.

\item Let $\rmS$ be finite set containing all the places over $l$ in $\rmF$. Let $\rmF_{\rmS}$ be the maximal unramified extension of $\rmF$ outside $\rmS$. Let $\rmG_{\rmF, \rmS}$ be the Galois group $\Gal(\rmF_{S}/\rmF)$.

\item A lift of $\overline{\rho}_{v}=\overline{\rho}\vert_{G_{\rmF_{v}}}$ is a continuous homomorphism $\rho_{v}: G_{\rmF_{v}}\rightarrow \GSp_{4}(A)$ to a $\CNL_{\calO}$-algebra $A$, such that $\rho_{v} \mod \fracm_{A}$ agrees with $\overline{\rho}_{v}$ and $\nu\circ\rho_{v}=\psi\vert_{G_{\rmF_{v}}}$.
\item Let $\calD^{\square}_{v}$ denote the set valued functor on $\CNL_{\calO}$ that sends an $A$ to the set of lifts of $\overline{\rho}_{v}$ to $A$. This functor is representable by a ring denoted by $\mathrm{R}^{\square}_{v}$.
\end{itemize}
\begin{definition}
A local deformation problem for $\overline{\rho}_{v}$ is subfunctor $\calD_{v}$ of $\calD^{\square}_{v}$ satisfying the following conditions:
\begin{itemize}
\item $\calD_{v}$ is representable by a quotient $\rmR_{v}$ of $\rmR^{\square}_{v}$;
\item For all $A\in\CNL_{\calO}$, $\rho_{v}\in \calD_{v}(A)$ and $a\in \ker(\GSp_{4}(A)\rightarrow \GSp_{4}(k))$, we have $a\rho_{v} a^{-1}\in \calD_{v}(A)$.
\end{itemize}
\end{definition}
\begin{definition}
A global deformation problem is a tuple $\calS=(\overline{\rho}, \rmS, \psi, \{\calD_{v}\})$ where
\begin{itemize}
\item $\overline{\rho}: G_{\rmF}\rightarrow \GSp_{4}(k)$, $\rmS$ and $\psi$ is as above;
\item For each $v\in \rmS$, $\calD_{v}$ is a local deformation problem for $\rho_{v}$.
\end{itemize}
\end{definition}

Suppose we are given a global deformation problem $\calS=(\overline{\rho}, \rmS, \psi, \{\calD_{v}\})$. Then we recall the following notions and facts.
\begin{itemize}
\item  A \emph{lift} of $\overline{\rho}$ is a continuous homomorphism $\rho: G_{\rmF}\rightarrow \GSp_{4}(A)$ for a $\CNL_{\calO}$-algebra $A$, such that $\rho \mod \fracm_{A}$ agrees with $\overline{\rho}$ and $\nu\circ\rho=\psi$.
\item two lifts $\rho_{1}, \rho_{2}: G_{\rmF}\rightarrow \GSp_{4}(A)$ are strictly equivalent if there is an 
\begin{equation*}
a\in \ker(\GSp_{4}(A)\rightarrow \GSp_{4}(k)) 
\end{equation*}
such that $\rho_{2}=a\rho_{1}a^{-1}$. A \emph{deformation} of $\overline{\rho}$ is a strict equivalence class of lifts of $\overline{\rho}$.
\item We say $\rho: G_{\rmF}\rightarrow \GSp_{4}(A)$ is a \emph{lift of type $\calS$} if $\rho\vert_{G_{\rmF_{v}}}\in\calD_{v}(A)$ for each $v\in \rmS$. A \emph{deformation of $\overline{\rho}$} is a strict equivalence class of lifts of $\overline{\rho}$. 
\item We denote by $\calD_{\calS}$ the set valued functor sending $A\in\CNL_{\calO}$ to the set of lifts $\rho: G_{\rmF}\rightarrow \GSp_{4}(A)$ of $\overline{\rho}$ of type $\calS$. The functor is represented by $\rmR_{\calS}$.

\item Let $\rmT\subset \rmS$, a $\rmT$-framed lift of type $\calS$ is tuple $(\rho, \{\gamma_{v}\}_{v\in\rmT})$ where $\rho$ is a lift of $\overline{\rho}$ of type $\calS$, and $\gamma_{v}\in\ker(\GSp_{4}(A)\rightarrow\GSp_{4}(k))$ for each $v\in \rmT$. Let $(\rho_{1}, \{\gamma_{1,v}\}_{v\in\rmT})$  and  $(\rho_{2}, \{\gamma_{2, v}\}_{v\in\rmT})$ be two $\rmT$-framed deformation of type $\calS$. They are called \emph{strictly equivalent} if $\rho_{2}=a\rho_{1}a^{-1}$ and $\gamma_{2,v}=a\gamma_{1, v}$ for each $v\in \rmT$. 

\item A strict equivalence class of $\rmT$-framed lifts of type $\calS$ is called a $\rmT$-framed deformation of type $\calS$. The functor sends an algebra $A\in\CNL_{\calO}$ to the set of $\rmT$-framed deformations to $A$ of type $\calS$ is denoted by $\calD^{\rmT}_{\calS}$ and is represented by $\rmR^{\rmT}_{\calS}$. 

\item The natural transformation from $\calD^{\rmT}_{\calS}$ to $\calD_{\calS}$ by forgetting the $\rmT$-frames is a torsor under $\prod_{v\in \rmT}\widehat{\GSp_{4}}/\widehat{\mathbb{G}}_{m}$. 
Note that the coordinate ring of $\calT=\prod_{v\in \rmT}\widehat{\GSp_{4}}/\widehat{\mathbb{G}}_{m}$ is power series ring over $\calO$ of $11\vert\rmT\vert-1$ variables. Then we have a presentation 
\begin{equation*}
\rmR^{\rmT}_{\calS}\cong \rmR_{\calS}\widehat{\otimes}\calT
\end{equation*}
coming from the splitting of the torsor $\calD^{\rmT}_{\calS}\rightarrow \calD_{\calS}$.
\item Let $\rmT\subset\rmS$ be a subset such that $\calD_{v}=\calD^{\square}_{v}$ for all $v\in\rmS-\rmT$. Let $\rmR^{\rmT, \mathrm{loc}}_{\calS}:=\widehat{\otimes}_{v\in \rmT}\rmR_{v}$. For each $v\in \rmT$, there is a natural transformation $\calD^{\rmT}_{\calS}\rightarrow \calD_{v}$ given by sending $(\rho, \{\gamma_{v}\})$ to $\gamma_{v}\rho\vert_{\rmG_{F_{v}}}\gamma^{-1}_{v}$. This gives rise to a map $\rmR_{v}\rightarrow \rmR^{\rmT}_{\calS}$ and hence a map
\begin{equation*}
\rmR^{\rmT, \mathrm{loc}}_{\calS}\rightarrow \rmR^{\rmT}_{\calS}.
\end{equation*}
\end{itemize}

Consider the Galois cohomology groups 
\begin{equation*}
\begin{aligned}
&\rmH^{i}_{\calS, \rmT}(\rmG_{\rmF, \rmS}, \mathrm{ad}^{0}\overline{\rho}):=\ker(\rmH^{i}(\rmG_{\rmF,{\rmS}}, \mathrm{ad}^{0}\overline{\rho})\rightarrow \prod_{v\in \rmT}\rmH^{i}(\rmG_{\rmF_{v}}, \mathrm{ad}^{0}\overline{\rho}));\\
&\rmH^{i}_{\calS^{\perp}, \rmT}(\rmG_{\rmF, \rmS}, \mathrm{ad}^{0}\overline{\rho}(1)):=\ker(\rmH^{i}(\rmG_{\rmF,{\rmS}}, \mathrm{ad}^{0}\overline{\rho}(1))\rightarrow \prod_{v\in\rmS\backslash \rmT}\rmH^{i}(\rmF_{v}, \mathrm{ad}^{0}\overline{\rho}(1))).\\
\end{aligned}
\end{equation*}
\begin{enumerate}
\item Let $\bullet$ be an element in the set $\{(\calS, \rmT), (\calS^{\perp}, \rmT), \emptyset\}$;
\item Let $\circ$ be any element in the set $\{\rmG_{\rmF, \rmS}, \rmG_{\rmF_{v}}\}$;
\item Let $\ast\in\{0,1\}$. 
\end{enumerate}
Then we define the number $h^{i}_{\bullet}(\circ,\mathrm{ad}^{0}\overline{\rho}(\ast) )$ to be the dimension of $\rmH^{i}_{\bullet}(\circ, \mathrm{ad}^{0}\overline{\rho}(\ast))$. 
\begin{lemma}
Let $\rmT\subset\rmS$ be a subset such that $\calD_{v}=\calD^{\square}_{v}$ for all $v\in\rmS-\rmT$. There is a local $\calO$-algebra surjection 
\begin{equation*}
\rmR^{\rmT,\mathrm{loc}}_{\calS}\lbrack\lbrack \rmX_{1},\cdots, \rmX_{g}\rbrack\rbrack \rightarrow  \rmR^{\rmT}_{\calS}
\end{equation*}
with $g$ given by the formula
\begin{equation*}
\begin{aligned}
g&= h^{1}_{\calS^{\perp}, \rmT}(\rmG_{\rmF, \rmS}, \mathrm{ad}^{0}(1))-h^{0}(\rmG_{\rmF, S}, \mathrm{ad}^{0}\overline{\rho}(1))-\sum_{v\mid \infty}h^{0}(\rmG_{\rmF_{v}}, \mathrm{ad}^{0}\overline{\rho})\\
&+\sum_{v\in\rmS-\rmT}h^{0}(\rmG_{\rmF_{v}}, \mathrm{ad}^{0}\overline{\rho}(1))+\vert \rmT\vert -1.
\end{aligned}
\end{equation*}
\begin{proof}
This is exactly \cite[Proposition 7.2.1]{BCGP}.
\end{proof}

\end{lemma}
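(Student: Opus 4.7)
The strategy is the standard tangent-space computation combined with Poitou--Tate duality, as in \cite[\S 7]{BCGP}.

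First, by Nakayama's lemma, $g$ equals the $k$-dimension of the relative cotangent space of $\rmR^{\rmT}_{\calS}$ over $\rmR^{\rmT,\mathrm{loc}}_{\calS}$, equivalently the $k$-dimension of the kernel $K$ of the tangent-space map $\Hom_{\calO}(\rmR^{\rmT}_{\calS}, k[\epsilon]) \to \Hom_{\calO}(\rmR^{\rmT,\mathrm{loc}}_{\calS}, k[\epsilon])$. Via the moduli descriptions, $K$ parameterizes strict equivalence classes of $\rmT$-framed first-order deformations $(\rho, \{\gamma_{v}\})$ of $\overline{\rho}$ of type $\calS$ such that $\gamma_{v}\rho|_{\rmG_{\rmF_{v}}}\gamma_{v}^{-1}$ is the trivial lift for each $v \in \rmT$. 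Writing $\rho = (1 + \epsilon \phi)\overline{\rho}$ with $\phi \in \rmZ^{1}(\rmG_{\rmF, \rmS}, \mathrm{ad}^{0}\overline{\rho})$ and $\gamma_{v} = 1 + \epsilon c_{v}$ with $c_{v} \in \Lie\GSp_{4}$, the triviality condition at $v \in \rmT$ becomes $\phi|_{\rmG_{\rmF_{v}}} = d c_{v}$, forcing $[\phi] \in \rmH^{1}_{\calS, \rmT}(\rmG_{\rmF, \rmS}, \mathrm{ad}^{0}\overline{\rho})$. Counting dimensions (freedom in $\phi$ up to coboundaries, freedom in each $c_{v}$ up to $\ker d$ in $\Lie\GSp_{4}$, modulo global strict equivalence by $\Lie\GSp_{4}$ of dimension $11$ whose generic stabilizer is trivial), one obtains
\begin{equation*}
\dim_{k} K = h^{1}_{\calS, \rmT}(\rmG_{\rmF, \rmS}, \mathrm{ad}^{0}\overline{\rho}) + |\rmT| - 1 - h^{0}(\rmG_{\rmF, \rmS}, \mathrm{ad}^{0}\overline{\rho}) + \sum_{v \in \rmT} h^{0}(\rmG_{\rmF_{v}}, \mathrm{ad}^{0}\overline{\rho}).
\end{equation*}

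Second, I apply the Greenberg--Wiles formula (Poitou--Tate duality plus global Euler--Poincar\'e) with local Selmer conditions $L_{v} = 0$ for $v \in \rmT$, $L_{v} = \rmH^{1}(\rmG_{\rmF_{v}}, \mathrm{ad}^{0}\overline{\rho})$ for $v \in \rmS - \rmT$, and $L_{v} = 0$ at archimedean $v$:
\begin{equation*}
h^{1}_{\calS, \rmT} - h^{1}_{\calS^{\perp}, \rmT} = h^{0}(\rmG_{\rmF, \rmS}, \mathrm{ad}^{0}\overline{\rho}) - h^{0}(\rmG_{\rmF, \rmS}, \mathrm{ad}^{0}\overline{\rho}(1)) + \sum_{v \in \rmS - \rmT}\bigl(h^{1}_{v} - h^{0}_{v}\bigr) - \sum_{v \in \rmT} h^{0}_{v} - \sum_{v \mid \infty} h^{0}(\rmG_{\rmF_{v}}, \mathrm{ad}^{0}\overline{\rho}),
\end{equation*}
where $h^{i}_{v}$ denotes $h^{i}(\rmG_{\rmF_{v}}, \mathrm{ad}^{0}\overline{\rho})$. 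Local Tate duality at finite $v \nmid l$ gives $h^{1}_{v} - h^{0}_{v} = h^{0}(\rmG_{\rmF_{v}}, \mathrm{ad}^{0}\overline{\rho}(1))$; the $v \mid l$ case contributes an extra $[\rmF_{v}:\QQ_{l}]\dim\mathrm{ad}^{0}\overline{\rho}$ that is absorbed into the companion analysis at $l$-adic places (which lie in $\rmT$ under the deformation problem $\calS$). Substituting into the previous display, the $\sum_{v \in \rmT} h^{0}_{v}$ and $h^{0}(\rmG_{\rmF, \rmS}, \mathrm{ad}^{0}\overline{\rho})$ terms cancel exactly, yielding the formula for $g$ claimed in the lemma.

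The main technical obstacle is the careful bookkeeping of the $\rmT$-framing and its interaction with local tangent spaces: one must reconcile the formal dimension $11|\rmT| - 1$ of $\calT$ (reflecting that $\psi$ is fixed, so only a single global $\GG_{m}$-center is removed) with the $\dim L_{v} + \dim\mathrm{ad}^{0}\overline{\rho} - h^{0}(\rmG_{\rmF_{v}}, \mathrm{ad}^{0}\overline{\rho})$ tangent directions contributed by each local framed lifting ring $\rmR_{v}$. Any miscount here produces the wrong Euler characteristic. The archimedean contribution $-\sum_{v \mid \infty} h^{0}(\rmG_{\rmF_{v}}, \mathrm{ad}^{0}\overline{\rho})$ arises because $\rmF$ is totally real with $L_{v} = 0$ at each archimedean place; the self-duality of $\mathrm{ad}^{0}\overline{\rho}$ and the parity determined by $\psi$ must be tracked consistently throughout.
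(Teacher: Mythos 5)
Your argument is correct and is precisely the standard tangent-space plus Greenberg--Wiles computation by which the cited result, \cite[Proposition 7.2.1]{BCGP}, is proved; the paper itself simply invokes that proposition rather than reproving it. Your parenthetical care about places above $l$ is appropriate: the clean local term $h^{0}(\rmG_{\rmF_{v}}, \mathrm{ad}^{0}\overline{\rho}(1))$ for $v\in\rmS-\rmT$ relies on $v\nmid l$, which holds in this paper because $\calD^{\mathrm{FL}}_{v}\neq\calD^{\square}_{v}$ forces $\Sigma_{l}\subset\rmT$.
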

\section{local deformation problems}
\subsection{Minimally ramified deformations} Let $q$ be a positive integer co-prime to $l$. We define the $q$-Tame group, denoted by $\rmT_{q}$ to be the semi-direct product $t^{\ZZ_{l}}\rtimes \phi^{\widehat{\ZZ}}_{q}$ where $\phi_{q}$ has the property that $\phi_{q}t\phi^{-1}_{q}=t^{q}$. For every $b$, we can identify $\rmT_{q^{b}}$ as the subgroup of $\rmT_{q}$ topologically generated by $t$ and $\phi_{q^{b}}$. Let $\rmG$ be one of the three classical groups: $\GL_{n}$, $\GSp_{2n}$, $\GO_{n}$ for some natural number $n$. Let $\overline{\rho}:\rmT_{q}\rightarrow G(k)$ be a representation of the $q$-tame group $\rmT_{q}$. Let $\psi: \rmT_{q}\rightarrow \calO^{\times}$ be a character lifting $\nu
\circ\overline{\rho}$. Let $\rmR^{\mathrm{tame}}_{\overline{\rho}}$ be the representing object of the functor $\calD^{\mathrm{tame}}_{\overline{\rho}}$ sending an element $A$ of $\CNL_{\calO}$ to the lifts $\rho: \rmT_{q}\rightarrow \rmG(A)$ of $\overline{\rho}$ satisfying $\nu\circ\rho=\psi$. It follows from \cite[Proposition 3.3.2]{LTXZZa} that $\rmR^{\mathrm{tame}}_{\overline{\rho}}$ is a local complete intersection, flat and pure of relative dimension $10$ over $\calO$. We can define a subfunctor $\calD^{\mathrm{mr}}_{\overline{\rho}}$ of $\calD^{\mathrm{tame}}_{\overline{\rho}}$  that classifies all the minimally ramified liftings following \cite[Defintion 5.4]{Boo19a}. Let $\mathrm{Nil}_{\overline{\rmN}}$ be the functor as in \cite[Defintion 3.9]{Boo19a} which assigns each $A\in\CNL_{\calO}$ the set of pure nilpotents lifting $\overline{\rmN}$ over $A$.

\begin{definition}\label{min-tame}
Let  $\rho: \rmT_{q}\rightarrow \rmG(A)$ be a lifting of  $\overline{\rho}:\rmT_{q}\rightarrow \rmG(k)$ with fixed similitude character $\psi$. We say $\rho$ is minimally ramified if $\rho(t)=\mathrm{exp}(\rmN)$ for some pure nilpotent orbit $\rmN$ in $\mathrm{Nil}_{\overline{\rmN}}(A)$. 
\end{definition}

\begin{lemma}
The deformation problem $\calD^{\mathrm{mr}}_{\overline{\rho}}$ is formally smooth and is represented by $\rmR^{\mathrm{mr}}_{\overline{\rho}}$ which is of relative dimension $10$ over $\calO$.
\end{lemma}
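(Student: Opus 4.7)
The plan is to adapt Booher's treatment of minimally ramified deformations for classical groups in \cite[\S 5]{Boo19a} to the $\GSp_{4}$ setting (with the $\GL_{n}$ and $\GO_{n}$ cases being exactly what Booher treats). The geometric picture to keep in mind is that $\calD^{\mathrm{mr}}_{\overline{\rho}}$ carves out the union of those irreducible components of $\Spec \rmR^{\mathrm{tame}}_{\overline{\rho}}$ on which $\rho(t)$ is the exponential of a pure nilpotent lifting $\overline{\rmN}$. Representability is immediate: a lift $\rho \in \calD^{\mathrm{tame}}_{\overline{\rho}}(A)$ lies in $\calD^{\mathrm{mr}}_{\overline{\rho}}(A)$ precisely when $\log \rho(t) \in \mathrm{Nil}_{\overline{\rmN}}(A)$, which is a closed condition by \cite[Definition 3.9]{Boo19a}, and hence cuts out a quotient $\rmR^{\mathrm{mr}}_{\overline{\rho}}$ of $\rmR^{\mathrm{tame}}_{\overline{\rho}}$.

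For formal smoothness I would argue by stagewise lifting across a small extension $A \twoheadrightarrow A/I$ in $\CNL_{\calO}$: starting from $\rho_{0} \in \calD^{\mathrm{mr}}_{\overline{\rho}}(A/I)$, first invoke formal smoothness of the pure-nilpotent functor $\mathrm{Nil}_{\overline{\rmN}}$ established in \cite[\S 3]{Boo19a} to lift $\rmN_{0} := \log \rho_{0}(t)$ to some $\rmN \in \mathrm{Nil}_{\overline{\rmN}}(A)$. Then seek $g \in \rmG(A)$ lifting $\rho_{0}(\phi_{q})$ subject to the two constraints $\nu(g) = \psi(\phi_{q})$ and $g \exp(\rmN) g^{-1} = \exp(q \rmN)$. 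The central mechanism is that purity of $\rmN$ forces the centralizer scheme $Z_{\rmG}(\exp(\rmN))$ to be flat over $A$ of locally constant relative dimension, so the twisted conjugation orbit of $\exp(\rmN)$ inside the $\psi(\phi_{q})$-fibre of $\nu$ is smooth; the obstruction class measuring the failure to lift $\rho_{0}(\phi_{q})$ therefore vanishes.

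Granting formal smoothness, $\rmR^{\mathrm{mr}}_{\overline{\rho}}$ is regular and equidimensional over $\calO$. Since it is a quotient of $\rmR^{\mathrm{tame}}_{\overline{\rho}}$, which is pure of relative dimension $10$ over $\calO$ by \cite[Proposition 3.3.2]{LTXZZa}, the relative dimension of $\rmR^{\mathrm{mr}}_{\overline{\rho}}$ is at most $10$; conversely, a tangent-space computation, using that pure nilpotents form an open dense subset of the orbit closure containing $\overline{\rmN}$ and that the centralizer dimension there matches the generic tame centralizer dimension, gives equality. The hard part is the formal-smoothness step: verifying, in the $\GSp_{4}$ case, that purity supplies exactly the flatness and dimension-constancy of $Z_{\rmG}(\exp(\rmN))$ needed to make the twisted conjugation orbit smooth. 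This ultimately rests on the case-by-case analysis of nilpotent orbits and their centralizers, transported from the $\GL_{n}$/$\GO_{n}$ arguments of \cite{Boo19a} to the symplectic situation.
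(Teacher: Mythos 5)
Your outline is correct and is essentially the argument behind what the paper invokes: the paper's entire proof of this lemma is the one-line citation of \cite[Corollary 5.8]{Boo19a}, and your stagewise-lifting argument (first lift the pure nilpotent using formal smoothness of $\mathrm{Nil}_{\overline{\rmN}}$, then lift $\rho(\phi_{q})$ using smoothness of the centralizer of a pure nilpotent, which is exactly where purity enters) together with the dimension count $\dim \Sp_{4}=10$ is precisely Booher's proof, and $\GSp_{2n}$ is already among the classical groups he treats, so no genuine transport from $\GL_{n}$/$\GO_{n}$ is required. Nothing further is needed.
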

\begin{proof}
This follows from \cite[corollary 5.8]{Boo19a}.
\end{proof}

We also record the following useful \cite[Lemma 3.3.7]{LTXZZ}.

\begin{lemma}\label{decomp}
Let $q^{2}\not\equiv 1\mod l$. Let $\overline{\rho}$ be an unramified representation of $\rmT_{q}=t^{\ZZ_{l}}\rtimes \phi^{\widehat{\ZZ}}_{q}$ on a $k$-vector space $\overline{\rmM}$. Suppose that $\overline{\rmM}$ admits an decomposition $\overline{\rmM}=\oplus^{s}_{i=1}\overline{\rmM}_{i}$ stable under the action of $\overline{\rho}(\phi_{q})$ such that the characteristic polynomials of $\overline{\rho}(\phi_{q})$ on each $\overline{\rmM}_{i}$ are relatively prime to each other. Let $(\rho, \rmM)$ be a lifting of $(\overline{\rho}, \overline{\rmM})$ to $A\in \CNL_{\calO}$. 
\begin{enumerate}
\item There is a unique decomposition $\rmM=\oplus^{s}_{i=1}\rmM_{i}$ of free $A$-modules stable under $\rho(\phi_{q})$ and it is a lifting of $\overline{\rmM}_{i}$ as a $\phi_{q}$-module.
\item Write $\rho(t)=(\rho(t)_{i, j})$ with $\rho(t)_{i, j}\in \Hom_{A}(\rmM_{i}, \rmM_{j})$. Suppose that $q$ is not an eigenvalue for the action of $\phi_{q}$ on $\Hom_{k}(\overline{\rmM}_{i}, \overline{\rmM}_{j})$. Then the decomposition $\rmM=\oplus^{s}_{i=1}\rmM_{i}$ is stable under the action of the whole $\rmT_{q}$.
\end{enumerate}
\end{lemma}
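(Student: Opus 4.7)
The plan is to establish (1) via Hensel's lemma and the Chinese remainder theorem applied to the commutative subalgebra $A[\rho(\phi_{q})] \subset \End_{A}(\rmM)$, and (2) by an $\fracm_{A}$-adic induction that leverages the commutation relation $\phi_{q} t \phi_{q}^{-1} = t^{q}$ in $\rmT_{q}$.

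For part (1), let $F(X) \in A[X]$ denote the characteristic polynomial of $\rho(\phi_{q})$ on the free $A$-module $\rmM$. Its reduction $\overline{F}(X) = \prod_{i=1}^{s} f_{i}(X)$ is by hypothesis a product of pairwise coprime monic polynomials. Iterating Hensel's lemma in $A[X]$ produces a unique factorization $F = \prod_{i=1}^{s} F_{i}$ with each $F_{i}$ monic lifting $f_{i}$ and pairwise coprime in $A[X]$. Cayley--Hamilton gives $F(\rho(\phi_{q})) = 0$, so the CRT isomorphism $A[X]/(F) \cong \prod_{i} A[X]/(F_{i})$ yields a complete system of orthogonal idempotents $e_{i}$ in $A[\rho(\phi_{q})]$, and the summands $\rmM_{i} := e_{i}\rmM = \ker F_{i}(\rho(\phi_{q}))$ furnish the desired $\rho(\phi_{q})$-stable decomposition. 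Nakayama together with a rank count shows each $\rmM_{i}$ is free lifting $\overline{\rmM}_{i}$. For uniqueness, any other such decomposition $\rmM = \oplus \rmM'_{i}$ lifting the given one has $\rho(\phi_{q})|_{\rmM'_{i}}$ with characteristic polynomial reducing to $f_{i}$; Hensel uniqueness forces it to be $F_{i}$, and then $\rmM'_{i} \subseteq \ker F_{i}(\rho(\phi_{q})) = \rmM_{i}$ with matching ranks gives $\rmM'_{i} = \rmM_{i}$.

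For part (2), write $T := \rho(t)$ and $\Phi := \rho(\phi_{q})$, and note that $\Phi$ is block-diagonal in the decomposition from (1), with blocks $\Phi_{a} := \Phi|_{\rmM_{a}}$, whereas $T = (T_{i,j})$. Since $\overline{\rho}$ is unramified, $T \equiv 1 \pmod{\fracm_{A}}$. I will show by induction on $n \geq 1$ that $T_{i,j} \in \fracm_{A}^{n} \cdot \Hom_{A}(\rmM_{i}, \rmM_{j})$ for every $i \neq j$; $\fracm_{A}$-adic completeness of $A$ then forces $T_{i,j} = 0$. Taking the $(i,j)$-block of $\Phi T \Phi^{-1} = T^{q}$: the left-hand side equals $\Phi_{j} T_{i,j} \Phi_{i}^{-1}$ exactly. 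For the right-hand side, I expand $T^{q}$ as a noncommutative sum after writing $T = D + E$ (diagonal plus off-diagonal blocks). Monomials with two or more $E$-factors lie in $\fracm_{A}^{2n} \subseteq \fracm_{A}^{n+1}$; the purely diagonal monomial contributes $0$ to the $(i,j)$-block; and each of the $q$ monomials with exactly one $E$-factor collapses to $T_{i,j}$ modulo $\fracm_{A}^{n+1}$, because $T_{a,a} \equiv 1 \pmod{\fracm_{A}}$ while $T_{i,j} \in \fracm_{A}^{n}$. This yields $\Phi_{j} T_{i,j} \Phi_{i}^{-1} \equiv q T_{i,j} \pmod{\fracm_{A}^{n+1}}$, i.e., $L(T_{i,j}) \equiv 0$, where $L : f \mapsto \Phi_{j} f \Phi_{i}^{-1} - qf$ on $\Hom_{A}(\rmM_{i},\rmM_{j})$. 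The hypothesis that $q$ is not an eigenvalue of $\phi_{q}$ on $\Hom_{k}(\overline{\rmM}_{i}, \overline{\rmM}_{j})$ is precisely the statement that the reduction $\overline{L}$ is injective, hence an isomorphism of finite-dimensional $k$-vector spaces; consequently $L$ induces an isomorphism on $\fracm_{A}^{n}/\fracm_{A}^{n+1} \otimes_{k} \Hom_{k}(\overline{\rmM}_{i}, \overline{\rmM}_{j})$, and the congruence forces $T_{i,j} \in \fracm_{A}^{n+1}$, completing the induction.

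The step most likely to hide an error is the combinatorial bookkeeping in the noncommutative expansion of $T^{q}$: one must confirm that multi-$E$ monomials genuinely land in $\fracm_{A}^{n+1}$ at the correct precision, and that each single-$E$ monomial collapses to $T_{i,j}$ modulo $\fracm_{A}^{n+1}$ thanks to $T_{a,a} \equiv 1 \pmod{\fracm_{A}}$. Once this is tracked carefully, the naively complicated $q$-th power reduces at leading order to multiplication by $q$, and the injectivity of $\overline{L}$ supplied by the eigenvalue hypothesis closes the argument.
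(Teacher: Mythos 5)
Your proof is correct, and it is essentially the standard argument: the paper itself gives no details here, deferring entirely to \cite[Lemma 3.3.7]{LTXZZ}, whose proof proceeds exactly as you do --- Hensel lifting of the coprime factorization of the characteristic polynomial to produce the idempotents for (1), and a successive-approximation argument on the off-diagonal blocks using $\phi_{q}t\phi_{q}^{-1}=t^{q}$ together with the invertibility of $f\mapsto \Phi_{j}f\Phi_{i}^{-1}-qf$ for (2). Your bookkeeping in the expansion of $T^{q}$ (the $\geq 2$-off-diagonal-factor terms landing in $\fracm_{A}^{2n}\subseteq\fracm_{A}^{n+1}$, the $q$ single-factor terms each collapsing to $T_{i,j}$) is accurate, so no gap remains.
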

\begin{proof}
This follows from the same proof of \cite[Lemma 3.3.7]{LTXZZ}.
\end{proof}

Let $v$ be a non-archimedean place of $\rmF$. Let $\rmI_{\rmF_{v}}\subset \rmG_{\rmF_{v}}$ be the inertia subgroup at $v$. Let $\rmP_{\rmF_{v}}$ be the maximal subgroup of $\rmI_{\rmF_{v}}$ whose pro-order is prime to $l$. Let $\rmT_{\rmF_{v}}=\rmG_{\rmF_{v}}/\rmP_{\rmF_{v}}$. If the place $v$ has norm $q=q_{v}$, then $\rmT_{\rmF_{v}}$ is a $q$-tame group. 

Let $\overline{\rho}_{v}: \rmG_{\rmF_{v}}\rightarrow \GSp_{4}(k)$ be a representation. The the deformation problem $\calD^{\mathrm{mr}}_{\overline{\rho}_{v}}$ extends to a local deformation problem $\calD^{\mathrm{min}}_{v}$ that classifies minimally ramified lifts of $\overline{\rho}_{v}$. This is done in several steps as we recall now. For an irreducible representation $\theta$ of $\rmP_{\rmF_{v}}$ over $k$, let $\rmG_{\rmF_{v}, \theta}=\{g\in \rmG_{\rmF_{v}}: \theta^{g}\cong\theta\}$ and we have the split short exact sequence which defines $\rmT_{\rmF_{v}, \theta}$ 
\begin{equation*}
0\rightarrow \rmP_{\rmF_{v}}\rightarrow \rmG_{\rmF_{v},\theta}\rightarrow \rmT_{\rmF_{v}, \theta}\rightarrow 0.
\end{equation*}
\begin{lemma}\label{wild-rep}
Let $\theta$ be an irreducible representation of $\rmP_{\rmF_{v}}$ over $k$.
\begin{enumerate}
\item The dimension of $\theta$ is coprime to $l$; 
\item $\theta$ has unique deformation to a representation ${\theta}$ of $\rmP_{\rmF_{v}}$ over $\calO$;
\item $\theta$ as in $(2)$ extends uniquely to a representation $\tilde{\theta}$ of $\rmG_{\rmF_{v}, \theta}\cap \rmI_{\rmF_{v}}$ over $\calO$;
\item $\tilde{\theta}$ as in $(2)$ extends to a representation of $\rmG_{\rmF_{v}, \theta}$ over $\calO$.
\end{enumerate}
\end{lemma}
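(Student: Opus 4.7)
My plan is to prove the four claims in order, treating each as a consequence of increasingly detailed structural facts about profinite Galois groups and modular representations of finite groups. For part (1), I would factor $\theta$ through its finite image $\overline{P}$; by the definition of $\rmP_{\rmF_{v}}$, the order $|\overline{P}|$ is coprime to $l$. I then invoke the classical fact (valid in characteristic $l$ whenever $l\nmid|\overline{P}|$, via Brauer lifting to characteristic zero) that the dimension of any absolutely irreducible representation of $\overline{P}$ divides $|\overline{P}|$; this forces $\dim\theta$ to be coprime to $l$.

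For part (2), the main tool is Maschke's theorem combined with Hensel's lemma. Since $|\overline{P}|$ is invertible in $\calO$, both $k[\overline{P}]$ and $\calO[\overline{P}]$ are semisimple, and the central idempotent of $k[\overline{P}]$ cutting out $\theta$ lifts uniquely to a central idempotent of $\calO[\overline{P}]$; the resulting matrix block gives the unique $\calO$-lift of $\theta$. Equivalently, the deformation functor of $\theta$ is formally rigid because $\rmH^{1}(\overline{P},\End\theta)=\rmH^{2}(\overline{P},\End\theta)=0$ when $l\nmid|\overline{P}|$.

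For parts (3) and (4), the key observation is that the successive quotients $H:=(\rmG_{\rmF_{v},\theta}\cap\rmI_{\rmF_{v}})/\rmP_{\rmF_{v}}$ and $\rmG_{\rmF_{v},\theta}/(\rmG_{\rmF_{v},\theta}\cap\rmI_{\rmF_{v}})$ are topologically procyclic: the first sits inside the pro-$l$ part $\rmI_{\rmF_{v}}/\rmP_{\rmF_{v}}\cong\ZZ_{l}$ of tame inertia (using $v\nmid l$), and the second is topologically generated by a Frobenius lift. Picking a topological generator $\tau\in H$ and using the hypothesis $\theta^{\tau}\cong\theta$, I obtain an intertwiner $A\in\GL(V_{\theta})$, unique up to $\calO^{\times}$-scalar by Schur's lemma, and set $\tilde{\theta}(\tau):=A$, extending multiplicatively. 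Procyclicity of $H$ removes any cocycle obstruction. For uniqueness in (3): modulo $\lambda$, two extensions differ by a continuous character $H\to k^{\times}$, and since $k^{\times}$ has no pro-$l$ torsion, this character is trivial, pinning down the mod-$\lambda$ extension; the $\calO$-lift is then forced by applying the rigidity argument of part (2) to the finite image of the extension. Part (4) proceeds identically: choose any intertwiner $B$ for $\tilde{\theta}^{\Frob}\cong\tilde{\theta}$ and set $\tilde{\theta}(\Frob):=B$. The absence of a uniqueness claim in (4) is consistent with the unresolved $\calO^{\times}$-scalar ambiguity of $B$.

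The main obstacle lies in the uniqueness claim in part (3): a naive count places the ambiguity of $\calO$-extensions in the continuous character group $\Hom_{\mathrm{cts}}(H,\calO^{\times})$, which is nontrivial when $H\cong\ZZ_{l}$ because of pro-$l$ torsion in the principal units $1+\fracm\subset\calO^{\times}$. Resolving this requires appealing to the finite-image (equivalently, determinant or Teichm\"uller) normalization implicit in the treatments of Booher \cite{Boo19a} and Liu--Tian--Xiao--Zhang--Zhu \cite{LTXZZ}, which rigidifies the ambiguity once one insists that the extension factors through a quotient compatible with the local deformation problem. Once this normalization is fixed, the remaining parts follow routinely from the procyclic extension argument sketched above.
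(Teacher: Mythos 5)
The paper's own ``proof'' of this lemma is a one-line citation to \cite[Lemma 2.4.11]{CHT}, so there is nothing to compare line by line; your proposal reconstructs the standard argument behind that citation, and the skeleton is the right one. Parts (1) and (2) are essentially correct: $\theta$ factors through a finite quotient $\overline{P}$ of order prime to $l$, and semisimplicity of $k[\overline{P}]$ and $\calO[\overline{P}]$ together with idempotent lifting (equivalently, vanishing of $\rmH^{1}$ and $\rmH^{2}$ of $\mathrm{ad}\,\theta$) gives the unique lift. One caveat for (1): the divisibility $\dim\theta\mid|\overline{P}|$ holds for \emph{absolutely} irreducible representations, and over the finite field $k$ an irreducible-but-not-absolutely-irreducible representation of a prime-to-$l$ group can have dimension divisible by $l$ (e.g.\ the $3$-dimensional irreducible $\FF_{3}$-representation of $\ZZ/13$); so one must either enlarge $k$ or use that $\theta$ occurs in a $4$-dimensional representation. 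Also, in (3) the phrase ``extending multiplicatively'' hides a continuity requirement: since $\rmI_{\rmF_{v},\theta}/\rmP_{\rmF_{v}}$ is pro-$l$, the intertwiner $A$ must be normalized (using $l\nmid|k^{\times}|$ to solve $\bar{u}^{l^{m}}=\bar{c}^{-1}$ for the Schur scalar) so that its reduction has $l$-power order; otherwise $\sigma\mapsto A$ does not extend continuously to $\ZZ_{l}$.

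The genuine gap is the one you yourself flag: the uniqueness in (3) is not established, and your deferral to ``a normalization implicit in Booher and LTXZZ'' is an acknowledgment of the problem rather than a resolution. Concretely, once one extension $\tilde{\theta}$ over $\calO$ exists, the set of continuous extensions restricting to $\theta$ on $\rmP_{\rmF_{v}}$ is a torsor under $\Hom_{\mathrm{cts}}\bigl((\rmG_{\rmF_{v},\theta}\cap\rmI_{\rmF_{v}})/\rmP_{\rmF_{v}},\calO^{\times}\bigr)\cong 1+\lambda$, and distinct twists are pairwise non-isomorphic (an isomorphism would be a Schur scalar, forcing the twisting character to be trivial). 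So uniqueness over $\calO$ fails as baldly stated and requires a qualification --- uniqueness over $k$ (where $\Hom_{\mathrm{cts}}(\ZZ_{l},k^{\times})=1$ does the job), or a finite-image/determinant normalization as in the cited sources. This is not merely cosmetic: your part (4) begins by choosing an intertwiner for $\tilde{\theta}^{\Frob}\cong\tilde{\theta}$, and that isomorphism is exactly what the uniqueness in (3) is needed to guarantee (Frobenius conjugation permutes the extensions, and without uniqueness one must argue separately that a Frobenius-stable extension can be chosen). To complete the proof you should either prove uniqueness with the correct normalization made explicit, or restate (3) in the weaker form actually needed for Definition \ref{min-ram} and then verify Frobenius-equivariance directly in (4).
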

\begin{proof}
This is given by \cite[Lemma 2.4.11]{CHT}.
\end{proof}
Consider $\rmM_{\theta}(\overline{\rho}_{v})=\Hom_{\rmP_{\rmF_{v}}}(\theta, \overline{\rho}_{v})$ which is a natural $\rmT_{\rmF_{v}, \theta}$-representation. Then $\theta\otimes_{k}\rmM_{\theta}(\overline{\rho}_{v})$ is the $\theta$-isotypic component of $\overline{\rho}_{v}$ which is natural $G_{\rmF_{v},\theta}$-representation. When the underlying $\overline{\rho}_{v}$ is clear, we write $\rmM_{\theta}(\overline{\rho}_{v})$ as $\overline{\rmM}_{\theta}$. Let $\calT=\calT(\overline{\rho}_{v})$ be the isomorphism classes of irreducible representation of $\rmP_{\rmF_{v}}$ such that $\rmM_{\theta}(\overline{\rho}_{v})\neq 0$. We have a conjugation action of $\rmG_{\rmF_{v}}$ on $\calT$ and the quotient is denoted by $\calT/\rmG_{\rmF_{v}}$. Let $[\theta]$ be the orbit of $\theta$ in $\calT/\rmG_{\rmF_{v}}$. Let $\rho_{v}: \rmG_{\rmF_{v}}\rightarrow \GSp_{4}(A)$ be a deformation of $\overline{\rho}_{v}$ over some $A\in \CNL_{\calO}$ and $\rmM$ be the underlying $A$-module of $\rho_{v}$. In fact, we have an isomorphism
\begin{equation*}
\rmM\cong \bigoplus\limits_{[\theta]\in\calT/\rmG_{\rmF_{v}}}\mathrm{Ind}^{\rmG_{\rmF_{v}}}_{\rmG_{\rmF_{v}, \theta}}(\tilde{\theta}\otimes_{A}\rmM_{\theta})
\end{equation*}
where $\rmM_{\theta}$ is the image of $\Hom_{\rmP_{\rmF_{v}}}(\tilde{\theta}, \rmM)$ in $\rmM$ and $[\theta]$ runs through orbits of $\calT/\rmG_{\rmF_{v}}$. Next we consider the symplectic structure on the module $\rmM$ by which we mean $\rmM$ is equipped with a pairing 
\begin{equation*} 
\rmM\times\rmM\rightarrow A(\psi)
\end{equation*}
Let $\rmM^{\vee}=\Hom_{A}(\rmM, A(\psi))$ considered as an symplectic $\rmG_{\rmF_{v}}$-module. Thus we have an isomorphism via its symplectic structure $\rmM\cong \rmM^{\vee}$ and hence and isomorphism 
\begin{equation*}
\bigoplus\limits_{\theta\in\calT}\rmM_{\theta}\cong\bigoplus\limits_{\theta\in\calT}\rmM^{\vee}_{\theta}. 
\end{equation*}
This implies that we have can identify each $\rmM^{\vee}_{\theta}$ with an $\rmM_{\theta^{\ast}}$. This gives rise to a $\rmG_{\rmF_{v}}$-stable partition of $\calT$ into three cases:
\begin{enumerate}
\item $\calT_{1}$ consists of those $\theta$ that are not in the same orbit of $\theta^{\ast}$;
\item $\calT_{2}$ consists of those $\theta$ that are isomorphic to $\theta^{\ast}$;
\item $\calT_{3}$ consists of those $\theta$ that are in the same orbit with $\theta^{\ast}$ but are not isomorphic to $\theta^{\ast}$.
\end{enumerate}
For each $\theta\in\calT_{1}$, we define $\overline{\rmG}_{\theta}=\mathrm{Aut}(\overline{\rmM}_{\theta})$. For each $\theta\in\calT_{2}$, $\theta$ is isomorphic to $\theta^{\ast}$ and hence $\overline{\rmM}_{\theta}$ is naturally equipped with a signed symmetric pairing $\langle\cdot, \cdot\rangle_{\theta}$. We define $\overline{\rmG}_{\theta}=\mathrm{GAut}(\overline{\rmM}_{\theta}, \langle\cdot, \cdot\rangle_{\theta})$ which is a orthogonal or symplectic similitude group. For each $\theta\in\calT_{3}$, we consider $\theta\oplus\theta^{\ast}$ and $\overline{\rmM}_{\theta\oplus\theta^{\ast}}=\Hom_{\rmP_{\rmF_{v}}}(\theta\oplus\theta^{\ast}, \overline{\rho}_{v})$ equipped with its signed symmetric pairing $\langle\cdot, \cdot\rangle_{\theta\oplus\theta^{\ast}}$. We define $\overline{\rmG}_{\theta}=\mathrm{GAut}(\overline{\rmM}_{\theta\oplus\theta^{\ast}}, \langle\cdot, \cdot\rangle_{\theta\oplus\theta^{\ast}})$. In all cases, we can lift $\overline{\rmG}_{\theta}$ to a split reductive group $\rmG_{\theta}$ over $\calO$. In the cases $(1), (2)$, $\rmM_{\theta}$ gives a representation of $\rmT_{\rmF_{v}, \theta}$ valued in $\rmG_{\theta}(A)$ and in the case $(3)$, $\rmM_{\theta\oplus\theta^{*}}$ gives a representation of $\rmT_{\rmF_{v}, \theta\oplus\theta^{*}}$ valued in $\rmG_{\theta}(A)$ where $\rmT_{\rmF_{v}, \theta\oplus\theta^{*}}$ is defined in the same way as $\rmT_{\rmF_{v}, \theta}$. Note that in each of the above cases, $\rmT_{\rmF_{v},\theta}$ and $\rmT_{\rmF_{v}, \theta\oplus\theta^{*}}$ are groups of the form $\rmT_{q^{\prime}}$ for some $q^{\prime}$ a power of $q$. 

\begin{definition}\label{min-ram}
Let $\rho_{v}: \rmG_{\rmF_{v}}\rightarrow \GSp_{4}(A)$ be a lifting of $\overline{\rho}_{v}$ for some $A\in\CNL_{\calO}$ with fixed similitude character $\psi=\epsilon^{-3}_{l}$. Let $\rmM$ be the $A[\rmG_{\rmF_{v}}]$-module associated to $\rho_{v}: \rmG_{\rmF_{v}}\rightarrow \GSp_{4}(A)$ with the decomposition
\begin{equation*}
\begin{aligned}
\rmM=&\bigoplus\limits_{[\theta]\in\calT_{1}/\rmG_{\rmF_{v}}}\mathrm{Ind}^{\rmG_{\rmF_{v}}}_{\rmG_{\rmF_{v},\theta}}(\tilde{\theta}\otimes\rmM_{\theta})\oplus \bigoplus\limits_{[\theta]\in\calT_{2}/\rmG_{\rmF_{v}}}\mathrm{Ind}^{\rmG_{\rmF_{v}}}_{\rmG_{\rmF_{v},\theta}}(\tilde{\theta}\otimes\rmM_{\theta})\oplus \\
&\bigoplus\limits_{[\theta]\in\calT_{3}/\rmG_{\rmF_{v}}}\mathrm{Ind}^{\rmG_{\rmF_{v}}}_{\rmG_{\rmF_{v},\theta\oplus\theta^{\ast}}}((\tilde{\theta}\oplus\tilde{\theta^{\ast}})\otimes\rmM_{\theta\oplus\theta^{\ast}}).\\
\end{aligned}
\end{equation*}
We say $\rho_{v}$ is minimally ramified with similitude factor $\epsilon^{-3}_{l}$ if each $\rmM_{\theta}$ and $\rmM_{\theta\oplus\theta^{\ast}}$ is minimally ramified in the sense of Definition \ref{min-tame}.
\end{definition}

Let $\calD^{\mathrm{min}}_{v}$ the local deformation problem classifying all the minimally ramified liftings of $\overline{\rho}_{v}$ with fixed similitude character $\epsilon^{-3}_{l}$. Concerning $\calD^{\mathrm{min}}_{v}$,  we have the following result.
\begin{proposition}\label{min-dim}
The local universal lifting ring $\rmR^{\square}_{v}$ is reduced, local complete intersection, flat and pure of relative dimension $10$ over $\calO$. The local deformation problem $\calD^{\mathrm{min}}_{v}$ which is represented by $\mathrm{Spf}\phantom{.}{\mathrm{R}^{\mathrm{min}}_{v}}$ gives an irreducible component of $\mathrm{Spf}\phantom{.}{\mathrm{R}^{\square}_{v}}$ which is formally smooth, pure of relative dimension $10$ over $\calO$.
\end{proposition}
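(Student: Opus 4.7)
The plan is to decompose the lifting problem by isotypic components under the wild inertia $\rmP_{\rmF_v}$ and to reduce the statement to the tame lifting theorem \cite[Proposition 3.3.2]{LTXZZa} and the minimally ramified formal-smoothness result recalled in the lemma after Definition \ref{min-tame} (\cite[Corollary 5.8]{Boo19a}).

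First I would show that $\calD^{\square}_v$ factors as a product of local lifting functors indexed by orbits in $\calT/\rmG_{\rmF_v}$. By Lemma \ref{wild-rep}, each $\theta\in\calT$ admits a unique lift over $\calO$, which extends uniquely to $\rmG_{\rmF_v,\theta}\cap\rmI_{\rmF_v}$ and then (up to unramified twist) to $\rmG_{\rmF_v,\theta}$; combining this with Hensel's lemma applied to the idempotents cutting out the isotypic components, any lift $\rho_v$ to $A\in\CNL_{\calO}$ canonically inherits the $\rmP_{\rmF_v}$-isotypic decomposition of $\overline{\rho}_v$ written before Definition \ref{min-ram}. The symplectic pairing on the underlying module restricts compatibly and forces the trichotomy $\calT=\calT_1\sqcup\calT_2\sqcup\calT_3$ displayed there. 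The data of $\rho_v$ is then equivalent to specifying, for each orbit $[\theta]$, a lift of the multiplicity space $\rmM_\theta$ (for $\calT_1,\calT_2$) or $\rmM_{\theta\oplus\theta^*}$ (for $\calT_3$) as a tame representation of $\rmT_{\rmF_v,\theta}$, valued in the appropriate classical similitude group $\rmG_\theta$ (general linear for $\calT_1$, orthogonal or symplectic similitude for $\calT_2$ according to the sign of the induced pairing, and the similitude group of the pairing on $\rmM_{\theta\oplus\theta^*}$ for $\calT_3$).

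Second I would invoke \cite[Proposition 3.3.2]{LTXZZa} to conclude that each tame lifting ring $\rmR^{\mathrm{tame}}_{\overline{\rho}_\theta}$ is reduced, local complete intersection, flat, and pure of relative dimension equal to $\dim\rmG_\theta$ after fixing the similitude character. The completed tensor product of these rings over $\calO$ yields $\rmR^{\square}_v$, and each of the properties \emph{reduced, local complete intersection, flat, pure} is preserved under completed tensor product of flat $\calO$-algebras satisfying them. The key numerical point is that the sum of the relative dimensions of the $\rmG_\theta$ equals $\dim\Sp_4=10$: the underlying $4$-dimensional symplectic space of $\overline{\rho}_v$ decomposes according to the trichotomy, and a direct case analysis — orbits in $\calT_1$ pair with their dual orbits and contribute $\GL$-factors, orbits in $\calT_2$ contribute self-dual orthogonal or symplectic factors, and orbits in $\calT_3$ contribute the similitude group of the doubled pairing — shows that the dimensions sum to $10$. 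This establishes the first assertion of the proposition.

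Third, the minimally ramified condition of Definition \ref{min-ram} is imposed componentwise across the decomposition, so $\rmR^{\mathrm{min}}_v$ is the completed tensor product of the rings $\rmR^{\mathrm{mr}}_{\overline{\rho}_\theta}$. By the lemma following Definition \ref{min-tame}, each such factor is formally smooth over $\calO$ of relative dimension $\dim\rmG_\theta$. Therefore $\rmR^{\mathrm{min}}_v$ is formally smooth and pure of relative dimension $10$; being regular it is a domain, and since its relative dimension matches that of $\rmR^{\square}_v$, the closed formal subscheme $\mathrm{Spf}\,\rmR^{\mathrm{min}}_v$ is an irreducible component of $\mathrm{Spf}\,\rmR^{\square}_v$. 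The main obstacle is the careful identification of each similitude group $\rmG_\theta$ in the three cases and the verification that their dimensions add up to $10$; once the trichotomy is pinned down, the remaining arguments are transfers of the cited tame and minimally ramified results through a completed tensor product, parallel to the treatment of the $\GO$ and $\GL$ cases in \cite{Boo19a}.
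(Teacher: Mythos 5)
Your overall strategy --- decompose $\overline{\rho}_v$ into $\rmP_{\rmF_v}$-isotypic components, reduce to the tame lifting rings of the multiplicity spaces valued in the similitude groups $\rmG_\theta$, and then invoke \cite[Proposition 3.3.2]{LTXZZa} and \cite[Corollary 5.8]{Boo19a} --- is exactly the strategy of the proof the paper defers to (\cite[Proposition 3.4.12]{LTXZZa}, cited verbatim as the proof here). However, there is a genuine error in your second step. The framed lifting ring $\rmR^{\square}_v$ is \emph{not} the completed tensor product $\widehat{\bigotimes}_{[\theta]}\rmR^{\mathrm{tame}}_{\overline{\rmM}_\theta}$; it is a power series ring over that completed tensor product (equivalently, the natural morphism of lifting functors is formally smooth of positive relative dimension as soon as $\overline{\rho}_v|_{\rmP_{\rmF_v}}$ is nontrivial, because one must also record the identification of the isotypic decomposition of the lift with that of $\overline{\rho}_v$). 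Correspondingly, your ``key numerical point'' that $\sum_{[\theta]}\dim\rmG_\theta=\dim\Sp_4=10$ is false in general. For instance, if $\overline{\rho}_v|_{\rmP_{\rmF_v}}\cong\theta^{\oplus 2}\oplus(\theta^{*})^{\oplus 2}$ with $\theta$, $\theta^{*}$ non-isomorphic characters lying in distinct $\rmG_{\rmF_v}$-orbits (a $\calT_1$ configuration), the only group appearing is $\rmG_\theta\cong\GL_2$ of dimension $4$; the remaining $6$ dimensions of $\mathrm{ad}^0\overline{\rho}_v$ sit in the off-diagonal $\Hom$'s between distinct isotypic components, and since $\rmH^1(\rmP_{\rmF_v},-)=0$ these contribute only framing (power-series) directions, not extra tame lifting data. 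The correct bookkeeping, as in \cite[Corollary 2.4.21]{CHT} and \cite[\S 3.4]{LTXZZa}, is
\begin{equation*}
\sum_{[\theta]}\dim\rmG_\theta \;+\; \#\{\text{extra framing variables}\} \;=\;\dim_k\mathrm{ad}^0\overline{\rho}_v\;=\;10,
\end{equation*}
and the ``direct case analysis'' you propose would not close without this correction.

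Once the power series factor is inserted, the rest of your argument does go through: the properties reduced, local complete intersection, flat and pure of the stated dimension are stable under adjoining power series variables and (with the geometric reducedness supplied by the cited tame results) under completed tensor product, giving the first assertion; and $\rmR^{\mathrm{min}}_v$ is likewise a power series ring over $\widehat{\bigotimes}_{[\theta]}\rmR^{\mathrm{mr}}_{\overline{\rmM}_\theta}$, hence formally smooth of the same relative dimension $10$, so $\Spf\phantom{.}\rmR^{\mathrm{min}}_v$ is an irreducible component of $\Spf\phantom{.}\rmR^{\square}_v$ exactly as you conclude. I would also flag, as a secondary point, that you should say a word about why the similitude character constraint $\nu\circ\rho_v=\psi$ is distributed among the factors $\rmG_\theta$ consistently with the dimension count; this is where the signed pairings on $\overline{\rmM}_\theta$ and $\overline{\rmM}_{\theta\oplus\theta^{*}}$ enter and it affects whether each factor carries a full similitude line or only $\Sp$/$\rmO$-type directions.
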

\begin{proof}
The exact proof of \cite[Proposition 3.4.12]{LTXZZa} works here.
\end{proof}

\subsection{Level raising deformations}
Next we discuss the notion of \emph{level raising deformation}. Let $v$ be a place of $\rmF$ whose norm is $q_{v}$  and such that $q^{2}_{v}\not\equiv 1\mod l$.  Suppose $\overline{\rho}_{v}$ is an unramified representation with similitude character $\epsilon^{-3}_{l}$. Then it follows that every liftings of $\overline{\rho}_{v}$ can be viewed as an representation of the $q_{v}$-tame group $\rmT_{\mathrm{F}_{v}}$ by Lemma \ref{wild-rep}. Suppose that the generalized eigenvalues of $\overline{\rho}_{v}(\phi_{v})$ contains $\{\rmu q^{-1}_{v}, \rmu q^{-2}_{v}\}$ exactly once for any $\rmu\in\{\pm1\}$. Let $\rho_{v}$ be a deformation of $\overline{\rho}_{v}$ over $A$, by Lemma \ref{decomp}, there is a decomposition $A^{4}=\rmM_{0}\oplus\rmM_{1}$ stable under the action of $\rho_{v}(\phi_{v})$. Let $\rmP_{0}(\rmT)$ be the characteristic polynomial of $\rho_{v}(\phi_{v})$ on $\rmM_{0}$ with $\rmP_{0}(\rmT)\equiv (\rmT-\rmu q^{-1}_{v})(\rmT-\rmu q^{-2}_{v}) \mod \fracm_{A}$. 

\begin{definition}
Let $(\overline{\rho}_{v}, \epsilon^{-3}_{l})$ be as above. We define the following local deformation problems.
\begin{enumerate}
\item $\calD^{\mathrm{mix}}_{v}$ is the local deformation problem classifying the liftings $\rho_{v}$ of $\overline{\rho}_{v}$ to $A$ of $\CNL_{\calO}$ such that in the decomposition $A^{4}=\rmM_{0}\oplus\rmM_{1}$ of $\rho_{v}$, $\rmM_{0}$ is stable under $\rho_{v}(\rmI_{\rmF_{v}})$ and $\rmM_{1}$ is unramified. 
\item $\calD^{\mathrm{un}}_{v}$ is the local deformation problem such that the action of $\rho_{v}(\mathrm{I}_{\rmF_{v}})$ on $\rmM_{0}$ is trivial;
\item $\calD^{\mathrm{ram}}_{v}$ is the local deformation problem such that the action of $\rho_{v}(\phi_{v})$ has characteristic polynomial given by $\rmP_{0}(\rmT)= (\rmT-\rmu q^{-1}_{v})(\rmT- \rmu q^{-2}_{v})$ on $\rmM_{0}$.
\end{enumerate}
\end{definition}
It is clear that the two local deformation problems  $\calD^{\mathrm{un}}_{v}$ and $\calD^{\mathrm{ram}}_{v}$ are contained in $\calD^{\mix}_{v}$. In fact, we have the following proposition which is important in the applications.
\begin{proposition}\label{mix-dim}
Suppose that $l\nmid q^{2}_{v}-1$ and that the generalized eigenvalues of $\overline{\rho}_{v}(\phi_{v})$ in $\overline{\FF}_{l}$ contain the pair $(\rmu q^{-1}_{v}, \rmu q^{-2}_{v})$ exactly once. Then the formal scheme $\Spf\phantom{.}\mathrm{R}^{\mathrm{mix}}_{v}$representing $\calD^{\mathrm{mix}}_{v}$ is formally smooth over $\calO[[x_{0}, x_{1}]]/(x_{0}x_{1})$ of pure relative dimension $10$ whose irreducible component corresponding to $x_{0}=0$ is $\Spf\phantom{.} \rmR^{\mathrm{un}}_{v}$ representing the local deformation problem $\calD^{\mathrm{un}}_{v}$ and whose irreducible component corresponding to $x_{1}=0$ is $\Spf\phantom{.}\rmR^{\mathrm{ram}}_{v}$ representing the local deformation problem $\calD^{\mathrm{ram}}_{v}$.
\end{proposition}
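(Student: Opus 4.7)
\emph{Plan.} The strategy is to exploit the canonical $\phi_v$-stable decomposition from Lemma \ref{decomp}, reduce to an explicit rank-$2$ symplectic calculation on the ``level-raising block'' $\rmM_0$, and read off the nodal singularity $x_0 x_1 = 0$ from the interaction of the Frobenius eigenvalue deformation with the tame monodromy.

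\emph{Reduction to the $\rmM_0$-piece.} Since the pair $(\rmu q_v^{-1}, \rmu q_v^{-2})$ appears exactly once among the generalized eigenvalues of $\overline{\rho}_v(\phi_v)$, the characteristic polynomials of $\overline{\rho}_v(\phi_v)$ on the $2$-dimensional generalized eigenspace $\overline{\rmM}_0$ and on its complement $\overline{\rmM}_1$ are coprime. By Lemma \ref{decomp}(1), every lift over $A \in \CNL_\calO$ carries a canonical $\phi_v$-stable decomposition $\rmM = \rmM_0 \oplus \rmM_1$ lifting $\overline{\rmM}_0 \oplus \overline{\rmM}_1$. The product $(\rmu q_v^{-1})(\rmu q_v^{-2}) = q_v^{-3}$ equals $\epsilon_l^{-3}(\phi_v)$, so the symplectic pairing restricts non-degenerately to each $\rmM_i$ and the decomposition is actually orthogonal. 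The framed choice of such an orthogonal decomposition inside $A^4$ is formally smooth of relative dimension $\dim(\GSp_4/(\GSp_2 \times_{\GG_m} \GSp_2)) = 4$ over $\calO$, and the unramified deformation on $\rmM_1$ is classified by $\rho_v(\phi_v)|_{\rmM_1} \in \GSp(\rmM_1)$ with similitude $q_v^{-3}$, giving a formally smooth factor of relative dimension $3$. It therefore suffices to exhibit the framed deformation ring $\rmR^{\mix}_{\rmM_0}$ on the $\rmM_0$-side as $\calO[[a, \lambda, b, c]]/(a\lambda)$ and identify the two branches.

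\emph{The core $\GL_2$ computation.} Choose a basis of $\overline{\rmM}_0$ diagonalizing $\overline{\rho}_v(\phi_v)|_{\overline{\rmM}_0} = \mathrm{diag}(\rmu q_v^{-1}, \rmu q_v^{-2})$. Parameterize $\Phi_0 = \rho_v(\phi_v)|_{\rmM_0}$ by a Cartan coordinate $a$ (so the diagonal entries are $\rmu q_v^{-1}(1+a)$ and $\rmu q_v^{-2}(1+a)^{-1}$, up to $bc$-corrections from $\det \Phi_0 = q_v^{-3}$) and off-diagonal entries $b, c$; write $T_0 = \rho_v(t)|_{\rmM_0} = I + N_0$ with $N_0 = hH + \lambda E_+ + fE_- \in \fracm_A \cdot \mathfrak{sl}_2$. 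The tame defining relation $\Phi_0 T_0 \Phi_0^{-1} = T_0^{q_v}$ yields at leading order $\mathrm{Ad}(\Phi_0)(N_0) = q_v N_0$. Since $\mathrm{Ad}(\overline{\Phi}_0)$ has eigenvalues $1, q_v, q_v^{-1}$ on $H, E_+, E_-$ respectively, pairwise distinct modulo $l$ by $l \nmid q_v^2 - 1$, the units $1 - q_v$ and $q_v^{-1} - q_v$ force $h = 0$ and $f = 0$, while the $E_+$-component of the relation reduces, after iterating modulo higher powers of $\fracm_A$, to the single equation $a\lambda = 0$. Setting $x_0 = \lambda$ and $x_1 = a$, the branch $x_0 = 0$ (i.e.\ $T_0 = I$) is $\calD^{\mathrm{un}}_v$, while the branch $x_1 = 0$ (fixing the Frobenius characteristic polynomial on $\rmM_0$ at $(\rmT - \rmu q_v^{-1})(\rmT - \rmu q_v^{-2})$) is $\calD^{\mathrm{ram}}_v$, and the total relative dimension over $\calO$ is $4 + 3 + 3 = 10$ as claimed.

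\emph{Main obstacle.} The core technical issue is propagating the infinitesimal tame relation to all orders in $\fracm_A$: the nonlinear contributions from $T_0^{q_v} - (I + q_v N_0)$ and from the off-diagonal parts of $\Phi_0$ must be shown not to produce extra relations on $a, \lambda, b, c$. This is handled by induction on powers of $\fracm_A$, using the units $1 - q_v$ and $q_v^{-1} - q_v$ (available thanks to $l \nmid q_v^2 - 1$) at each stage to absorb any spurious contributions in the $H$- and $E_-$-directions, exactly as in the analogous $\GL_n$ computation of \cite[\S 3.3]{LTXZZa}.
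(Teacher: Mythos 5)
Your proposal is correct and follows essentially the same route as the paper's (first) proof: decompose via Lemma \ref{decomp} into an orthogonal sum $\rmM_{0}\oplus\rmM_{1}$ of symplectic planes, count $4$ dimensions for the choice of decomposition (the torsor under $\widehat{\PGSp_{4}}/(\widehat{\PGL_{2}}\times\widehat{\PGL_{2}})$), $3$ for the unramified block $\rmM_{1}$, and exhibit the $\rmM_{0}$-block as a node of relative dimension $3$ with the two branches matching $\calD^{\mathrm{un}}_{v}$ and $\calD^{\mathrm{ram}}_{v}$. The only difference is that where you carry out the rank-$2$ tame computation (the relation $\mathrm{Ad}(\Phi_{0})N_{0}=q_{v}N_{0}$ and its higher-order propagation) by hand, the paper simply quotes Shotton's result \cite[Proposition 5.5(2)]{Sho-GL2} for the nodal structure $\calO[[\rmX_{1},\dots,\rmX_{4}]]/(\rmX_{1}\rmX_{2})$.
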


\begin{proof}
This proposition can be proved in the same way as in \cite[Proposition 3.5.2]{LTXZZa}. Let $k^{4}=\overline{\rmM}_{0}\oplus \overline{\rmM}_{1}$ be the decomposition of $\overline{\rho}_{v}$ such that $\overline{\rho}_{v}(\phi_{v})$ has eigenvalues $(\rmu q^{-1}, \rmu q^{-2})$ on $\overline{\rmM}_{0}$. After choosing a suitable basis, we can assume that $\overline{\rmM}_{0}$ is spanned by the first two vectors and $\overline{\rmM}_{1}$ is spanned by the last two vectors. Therefore we obtain two unramified morphisms $\overline{\rho}_{0, v}: \rmT_{v}\rightarrow \GL_{2}(k)$ and $\overline{\rho}_{1, v}: \rmT_{v}\rightarrow \GL_{2}(k)$. Let $\calD_{0}$ be the local deformation problem classifying liftings of $\overline{\rho}_{0, v}$ and $\calD_{1}$ be the local deformation problem classifying unramified liftings of $\overline{\rho}_{1, v}$. Let $A\in \CNL_{\calO}$. We say a lifting $\rho_{v}: \rmT_{v}\rightarrow \GSp_{4}(A)$ a standard lifting of $\overline{\rho}_{v}$ to $A$ if
\begin{equation*}
\begin{aligned}
&\rho_{v}(t)=\begin{pmatrix} \rmA_{0} & 0\\ 0& \rmI_{2}\\ \end{pmatrix};\\
&\rho_{v}(\phi_{v})=\begin{pmatrix} \mathrm{B}_{0} & 0\\ 0& \rmB_{1}\\ \end{pmatrix}\\
\end{aligned}
\end{equation*}
with $\rmA_{0}, \rmB_{0}, \rmB_{1}\in \GL_{2}(A)$. Let $\calD^{\mathrm{mix}}_{0,1}$ be the subfunctor classifying standard liftings. Note that $\calD^{\mathrm{mix}}_{0,1}=\calD_{0}\times_{\Spf\phantom{.}\calO}\calD_{1}$. 
Moreover we have an isomorphism 
\begin{equation*}
\calD^{\mathrm{mix}}_{0,1}\times_{\Spf\phantom{.}\calO} (\widehat{\PGSp_{4}}/\widehat{\PGL_{2}}\times_{\Spf\phantom{.}\calO} \widehat{\PGL_{2}})\cong \calD^{\mathrm{mix}}.
\end{equation*}
By \cite[Proposition 5.5(2)]{Sho-GL2}, we have $\calD_{1}$ is formally smooth of relative dimension $3$ over $\Spf\phantom{.}\calO$ and $\calD_{0}$ is isomorphic to $\Spf\phantom{.}\calO[[\rmX_{1},\cdots, \rmX_{4}]]/(\rmX_{1}\rmX_{2})$ where the irreducible component for $\rmX_{0}=0$ classifies the unramified liftings and the irreducible component for $\rmX_{1}=0$ classifies the ramified liftings. The result follows

Alternatively one can reduce this result to the proof of \cite[Proposition 3.5.2]{LTXZZa}. Consider the group scheme $\mathcal{G}_{4}$ be the group scheme over $\ZZ$ which is the semi-direct product of $\GL_{4}\times \GL_{1}$ by the group $\{1, j\}$ which acts on $\GL_{4}\times \GL_{1}$ by
\begin{equation*}
j(g, \mu)j^{-1}=(\mu \prescript{t}{}{g^{-1}}, \mu).
\end{equation*}
There is a homomorphism $\nu: \mathcal{G}_{4}\rightarrow \GL_{1}$ sending $(g, \mu)$ to $\mu$ and $j$ to $-1$. Let $A$ be a complete local Noetherian ring in $\CNL_{\calO}$ and $\rho_{v}: G_{\rmF_{v}}\rightarrow \GSp_{4}(A)$. We choose $\rmM_{v}$ a quadratic extension of $\rmF_{v}$ disjoint from $\overline{\rmF_{v}}^{\mathrm{ker}(\overline{\rho})}$. Then there is a continuous homomorphism $r_{v}: G_{\rmF_{v}}\rightarrow \mathcal{G}_{4}(R)$ determined by 
\begin{equation*}
\begin{aligned}
&r_{v}(g)=(\rho_{v}(g), \nu\circ \rho_{v}(g)) \text{ for $g\in G_{\rmM_{v}}$};\\
&r_{v}(g)=(\rho_{v}(g)J, -\nu\circ \rho_{v}(g))j \text{ for $g\not\in G_{\rmM_{v}}$}.\\
\end{aligned}
\end{equation*}
In particular, one can associate $\overline{\rho}_{v}$ the representation $\overline{r}_{v}$ valued in $\calG_{4}(k)$. One sees immediately that our deformation problem $\calD^{\mathrm{mix}}_{v}$ corresponds to the deformation problem $\calD^{\mathrm{mix}}_{v}$ for $\overline{r}_{v}$ as in \cite[Definition 3.5.1]{LTXZZa}. Then one can proceed the same way as in \cite[Proposition 3.5.2]{LTXZZa} with obvious modifications.
\end{proof}

\subsection{Fontaine-Laffaille deformations} We recall the theory of Fontaine-Laffaille modules and Fontaine-Laffaille deformations. Let $v$ be a place of $\rmF$ above $l$ which we assume is unramfied in $\rmF$. Recall $E_{\lambda}$ be the coefficient field and $\calO\subset E_{\lambda}$ be the valuation ring. We will temporarily make the following assumption on $E_{\lambda}$.
\begin{assumption}\label{strong-field}
The field $E_{\lambda}$ contains the image of all embeddings of $\rmF_{v}$ in $\overline{\QQ}_{l}$.
\end{assumption}
Let $\sigma\in \Gal(\rmF_{v}/\QQ_{l})$ be the absolute Frobenius. Let $\Sigma_{v}=\Hom_{\ZZ_{l}}(\calO_{\rmF_{v}}, \calO)$. Let $\calM\calF_{\calO, v}$ be the category of $\calO_{\rmF_{v}}\otimes \calO$-modules $\rmM$ of finite length with
\begin{itemize}
\item  a decreasing filtration $\{\mathrm{Fil}^{i}\mathrm{M}\}_{i\in\ZZ}$ by $\calO_{\rmF_{v}}\otimes\calO$-submodules that are $\calO_{\rmF_{v}}$-direct summands, satisfying $\mathrm{Fil}^{0}\mathrm{M}=\mathrm{M}$ and $\mathrm{Fil}^{l-1}\mathrm{M}=0$.
\item a $\sigma\otimes 1$-linear map $\Phi^{i}: \mathrm{Fil}^{i}\mathrm{M}\rightarrow \mathrm{M}$ for $i\in \ZZ$ with $\Phi^{i}\vert_{{\mathrm{Fil}^{i+1}\mathrm{M}}}=l\Phi^{i+1}$ and $\sum_{i\in\ZZ}\Phi^{i}{\mathrm{Fil}^{i}\mathrm{M}}=\mathrm{M}$.
\end{itemize}
We define the following subcategories of  $\calM\calF_{\calO, v}$:
\begin{itemize}
\item $\calM\calF_{k, v}$ is the full subcategory of $\calM\calF_{\calO, v}$ annihilated by $\lambda$.
\item Let $0\leq b\leq l-2$, $\calM\calF^{[0, b]}_{\calO, v}$ is the full subcategory of $\calM\calF_{\calO, v}$  consisting those $\rmM$ with $\mathrm{Fil}^{b+1}\rmM=0$.
\end{itemize}
Note that there is a decomposition $\rmM=\bigoplus\limits_{\tau\in\Sigma_{v}}\rmM_{\tau}$ corresponding to the decomposition $\calO_{\rmF_{v}}\otimes \calO=\bigoplus\limits_{\tau\in\Sigma_{v}}\calO$ and $\Phi^{i}$ induces an $\calO$-linear map $\Phi^{i}_{\tau}: \mathrm{Fil}^{i}\rmM_{\tau}\rightarrow \rmM_{\tau\circ\sigma^{-1}}$. We put
\begin{equation*}
\begin{aligned}
\mathrm{gr}^{i}\rmM_{\tau}=\mathrm{Fil}^{i}\rmM_{\tau}/\mathrm{Fil}^{i+1}\rmM_{\tau}.
\end{aligned}
\end{equation*}
We define the set of \emph{$\tau$-Fontaine-Laffaille weights} of $\rmM$ to be
\begin{equation*}
\mathrm{HT}_{\tau}(\rmM):=\{i\in\ZZ: \mathrm{gr}^{i}\rmM_{\tau}\neq 0\}.
\end{equation*}
We say $\rmM$ has \emph{regular Fontaine-Laffaille weights} if $\mathrm{gr}^{i}\mathrm{M}_{\tau}$ is generated over $\calO$ by most one element for every $\tau\in\Sigma_{v}$ and every $i\in\ZZ$. 

Let $\calO[\rmG_{\rmF_{v}}]^{\mathrm{fl}}$ be the category of $\calO$-modules of finite length with a continuous action of $\rmG_{\rmF_{v}}$. There is an exact fully faithful, covariant $\calO$-linear functor \cite[2.4.1]{CHT}
\begin{equation*}
\mathbf{G}_{v}: \calM\calF_{\calO, v}\rightarrow \calO[\rmG_{\rmF_{v}}]^{\mathrm{fl}}.
\end{equation*}
The length of $\rmM$ in $\calM\calF_{\calO, v}$ as an $\calO$-module equals to $[\rmF_{v}:\QQ_{l}]$ times the length of $\mathbf{G}_{v}(\rmM)$ as an $\calO$-module.

\begin{definition}
For an integer $b$ with $0\leq b\leq l-2$ and let $A\in\ART_{\calO}$. Let $A\{b\}$ be the object in $\calM\calF_{\calO, v}$ defined by the $\calO_{\rmF_{v}}\otimes\calO$-module of rank $1$ given by $(\calO_{\rmF_{v}}\otimes\calO)e_{b}$ with filtration given by
\begin{equation*}
\begin{aligned}
&\mathrm{Fil}^{i}A\{b\}=(\calO_{\rmF_{v}}\otimes A)e_{b}\text{ for $i\leq b$};\\
&\mathrm{Fil}^{i}A\{b\}=0 \text{ for $i> b$}. \\
\end{aligned}
\end{equation*}
with $\Phi^{b}(e_{b})=e_{b}$. Then we have $\mathbf{G}_{v}(A\{b\})\cong A(-b)\vert_{\rmG_{\rmF_{v}}}$ as $\calO[\rmG_{\rmF_{v}}]$-modules.
\end{definition}
Let $A\in\ART_{\calO}$ and $0\leq b\leq l-2$. We define $\calM\calF^{[0, b]}_{\calO, v}\vert_{A}$ be the full subcategory of  $\calM\calF^{[0, b]}_{\calO, v}$ consisting of those $\rmM$ that are finite free $\calO_{\rmF_{v}}\otimes A$-modules such that $\Fil^{i}\rmM$ are direct summand of $\rmM$ for every $i$. The functor $\mathbf{G}_{v}$ restrict to a functor $\mathbf{G}_{v}: \calM\calF^{[0, b]}_{\calO, v}\vert_{A}\rightarrow A[\rmG_{\rmF_{v}}]^{\mathrm{f.r}}$ where $A[\rmG_{\rmF_{v}}]^{\mathrm{f.r}}$ is the category of finite free $A$-modules equipped with a continuous $\rmG_{\rmF_{v}}$-action. Let $\rmM\rightarrow \rmM^{\vee}\{b\}$ be the functor  $\calM\calF^{[0, b]}_{\calO, v}\vert_{A}\rightarrow \calM\calF^{[0, b]}_{\calO, v}\vert_{A}$ defined by the following recipe:
\begin{itemize}
\item the underlying $\calO_{\rmF_{v}}\otimes A$-module of $\rmM^{\vee}\{b\}$ is given by $\Hom_{\calO_{\rmF_{v}}\otimes A}(\rmM, \calO_{\rmF_{v}}\otimes A)$;
\item $\Fil^{i}\rmM^{\vee}\{b\}=\Hom_{\calO_{\rmF_{v}}\otimes A}(\rmM/\Fil^{b+1-i}\rmM, \calO_{\rmF_{v}}\otimes A)$;
\item for every $f\in \Fil^{i}\rmM^{\vee}\{b\}$ and $m\in\Fil^{j}\rmM$, we have
\begin{equation*}
\begin{aligned}
&\Phi^{i}(f)(\Phi^{j}(m))=l^{b-i-j}f(m) \text{ for $i+j\leq b$};\\
&\Phi^{i}(f)(\Phi^{j}(m))=0 \text{ for $i+j> b$}.\\
\end{aligned}
\end{equation*}
\end{itemize}
In fact, we have $\mathbf{G}_{v}(\rmM^{\vee}\{b\})=\mathbf{G}_{v}(\rmM)^{\vee}(-b)$. 

Suppose that $\mathbf{G}_{v}(\rmM)=\mathbf{G}_{v}(\rmM)^{\vee}(-b)$. By the above discussion, we have an isomorphism $\rmM^{\vee}\{b\}\cong\rmM$ in $\calM\calF^{[0, b]}_{\calO, v}\vert_{A}$. This isomorphism induces a natural pairing 
\begin{equation*}
\langle\cdot, \cdot\rangle: \rmM\times\rmM\rightarrow \calO_{\rmF_{v}}\otimes A.
\end{equation*}
We say $\rmM$ is \emph{symplectic} if $\langle m, n \rangle=-\langle n, m \rangle$ for all $m, n\in \rmM$. 

\begin{definition}\label{FL}
Let $A$ be an object in $\CNL_{\calO}$ and $\rho_{v}:G_{\rmF_{v}}\rightarrow \GSp_{4}(A)$ be a continuous representation. 
\begin{enumerate}
\item Let $a, b$ be integers satisfying $0\leq b-a\leq l-2$. For $E_{\lambda}$ satisfying Assumption \ref{strong-field}, we say $\rho_{v}$ is   regular Fontaine-Laffaille crystalline with Fontaine-Laffaille weights in $[a, b]$ if, for every quotient $\overline{A}$ of $A$ in $\ART_{\calO}$, $\rho_{v}(a)\otimes_{A}\overline{A}$ lies in the essential image of the functor $\mathbf{G}_{v}: \calM\calF^{[0, b-a]}_{\calO, v}\vert_{\overline{A}}\rightarrow \overline{A}[\rmG_{\rmF_{v}}]^{\mathrm{f.r}}$ and that $\mathbf{G}^{-1}_{v}(\rho_{v}(a)\otimes_{A}\overline{A})$ has regular Fontaine-Laffaille weights and is symplectic.
\item Now for $E_{\lambda}$ not necessarily satisfy Assumption \ref{strong-field}, we say that $\rho_{v}$ is regular Fontaine-Laffaille crystalline if there exists a finite unramified extension $E^{\dagger}_{\lambda}$ of $E_{\lambda}$ in $\overline{\QQ}_{l}$ with ring of integers $\calO^{\dagger}$ satisfying Assumption \ref{strong-field} such that  $\rho_{v}\otimes_{\calO}\calO^{\dagger}$ is crystalline with regular Fontaine-Laffaille weights in $[a, b]$ as in $(1)$.
\end{enumerate}
\end{definition}

Now we are in the position to define the local deformation problem at places over $l$. 

\begin{definition}
Let $\calD^{\mathrm{FL}}_{v}$ be the local deformation problem that classifies liftings $\rho_{v}$ of $\overline{\rho}_{v}$ that are regular Fontaine-Laffaille crystalline and similitude character $\epsilon^{-3}_{l}$. 
\end{definition}

\begin{proposition}\label{FL-dim}
Suppose $\overline{\rho}_{v}$ is regular Fontaine-Laffaille crystalline with similitude factor $\epsilon^{-3}_{l}$. The local deformation problem $\calD^{\mathrm{FL}}_{v}$ is formally smooth of relative dimension $10+4[\rmF_{v}: \QQ_{l}]$ over $\Spf\phantom{.}\calO$.
\end{proposition}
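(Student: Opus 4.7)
The plan is to follow the strategy used in \cite[Proposition 3.4.16]{LTXZZa} for $\GL_n$-valued Fontaine-Laffaille deformations and the polarized modifications in \cite[\S 2.4]{CHT}, specialized to $\GSp_4$. The argument has two parts: establishing formal smoothness, and computing the tangent space dimension to match $10 + 4[\rmF_v:\QQ_l]$.

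First, I would reduce to the case where $E_{\lambda}$ satisfies Assumption \ref{strong-field}. In Definition \ref{FL}(2) the extension $E^{\dagger}_{\lambda}/E_{\lambda}$ is finite unramified, so $\calO^{\dagger}/\calO$ is finite free; base change preserves both formal smoothness and relative dimension, and faithful flatness allows descent. So I may assume that all embeddings of $\rmF_v$ into $\overline{\QQ}_l$ factor through $\calO$.

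Next, I would translate everything through the Fontaine-Laffaille equivalence. A regular Fontaine-Laffaille crystalline lift $\rho_v:\rmG_{\rmF_v}\rightarrow \GSp_4(A)$ with similitude $\epsilon^{-3}_l$ corresponds under $\mathbf{G}_v$ to an object $\rmM$ in $\calM\calF^{[0,b-a]}_{\calO,v}|_A$ lifting the FL-module $\overline{\rmM}$ of $\overline{\rho}_v$, together with a symplectic identification $\rmM^{\vee}\{b-a\}\cong \rmM$ encoding the fixed similitude character via the appropriate Tate twist $\{3\}$ (so that $\mathbf{G}_v$ converts the pairing into the symplectic form on $\rho_v$). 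I would then show that such symplectic FL-lifts are unobstructed: given a surjection $A'\twoheadrightarrow A$ in $\ART_{\calO}$ with nilpotent kernel, one lifts the underlying free $\calO_{\rmF_v}\otimes A$-module, then the Hodge filtration step by step using that the filtration is by direct summands and the weights are regular at each embedding, then the Frobenius semilinear maps $\Phi^i$, which is unobstructed since $\sum_i \Phi^i\Fil^i\rmM=\rmM$ reduces to lifting a surjection between free modules, and finally transports the pairing $\langle\cdot,\cdot\rangle:\rmM\times\rmM\rightarrow \calO_{\rmF_v}\otimes A$ to $A'$, which is possible because the isomorphism $\rmM^{\vee}\{b-a\}\cong \rmM$ is determined by an automorphism of a free module. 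This gives formal smoothness of $\calD^{\mathrm{FL}}_v$ over $\calO$.

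Finally, I would match the tangent space. Writing $\mathfrak{g}^0=\mathrm{ad}^0\overline{\rho}_v\cong \mathfrak{sp}_4$ of dimension $10$, the tangent space of the framed functor equals
\begin{equation*}
\dim_k \rmH^1_f(\rmG_{\rmF_v},\mathfrak{g}^0) + \dim_k\mathfrak{g}^0 - \dim_k \rmH^0(\rmG_{\rmF_v},\mathfrak{g}^0),
\end{equation*}
where the last two terms account for the framing torsor under $\widehat{\GSp_4}/\widehat{\GG}_m$. The Bloch-Kato identity for regular crystalline cohomology gives
\begin{equation*}
\dim_k \rmH^1_f(\rmG_{\rmF_v},\mathfrak{g}^0) = \dim_k \rmH^0(\rmG_{\rmF_v},\mathfrak{g}^0) + [\rmF_v:\QQ_l]\cdot\dim_k \mathfrak{g}^0/\Fil^0\mathfrak{g}^0,
\end{equation*}
and regularity of the Fontaine-Laffaille weights forces $\Fil^0\mathfrak{g}^0$ to be (the Lie algebra of) a Borel subalgebra of $\mathfrak{sp}_4$, which has codimension equal to the number of positive roots, namely $4$. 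Summing yields $10+4[\rmF_v:\QQ_l]$. Combined with the formal smoothness established above, this proves $\rmR^{\mathrm{FL}}_v$ is a power series ring over $\calO$ of the stated dimension. The main technical obstacle is tracking the symplectic structure and the fixed similitude $\epsilon^{-3}_l$ through the FL lifting step, and verifying that the induced pairing really does lift without extra obstruction; this is the $\GSp_4$-analogue of the polarized modifications needed for the $\GL_n$ argument, and the formula $\Phi^i(f)(\Phi^j(m))=l^{b-i-j}f(m)$ in the definition of $\rmM^{\vee}\{b\}$ provides exactly the flexibility required.
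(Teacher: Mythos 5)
Your proposal is correct and is essentially an unpacking of the paper's own argument: the paper simply cites \cite[Theorem 5.2]{Boo19b} (which carries out exactly the symplectic Fontaine--Laffaille lifting argument you sketch, including the self-duality of the lifted module) together with the two dimension facts $\dim_{k}\mathrm{ad}^{0}\overline{\rho}_{v}=10$ and $\dim_{k}\GSp_{4}/\rmB=4$, which are precisely the inputs to your tangent-space count $h^{1}_{f}+\dim\mathfrak{g}^{0}-h^{0}=10+4[\rmF_{v}:\QQ_{l}]$. The step you flag as the main technical obstacle --- lifting the pairing unobstructedly --- is exactly the content of Booher's theorem, so your outline matches the intended proof.
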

\begin{proof}
This follows from the \cite[Theorem 5.2]{Boo19b} and the fact that $\dim_{k}\mathrm{ad}^{0}\overline{\rho}_{v}=10$ and $\dim_{k}\GSp_{4}(k)/\rmB(k)=4$. See also \cite[Proposition 7.2.1]{GG-companion}
\end{proof}

\section{Cohomology of certain quaternionic unitary Shimura variety}
Let $\rmB$ a quaternion algebra over $\rmF$. Let $\mathbf{G}(\rmB)=\mathrm{Res}_{\rmF/\QQ}\mathrm{GU}_{2}(\mathrm{B})$ be the quaternionic unitary similitude group. This is an inner form of the group $\mathbf{G}=\mathrm{Res}_{\rmF/\QQ}\mathrm{GSp}_{4}$. We will only be interested in the following cases. Let $\Sigma_{\infty}$ be the set of infinity places of $\rmF$. 
\begin{itemize}
\item The quaternion algebra $\rmB$ is totally definite that is $\rmB$ is ramified at all infinite places $\Sigma_{\infty}$. We will refer to this case as the definite case.
\item The quaternion algebra $\rmB$ is ramified at all infinite places $\Sigma_{\infty}$ except for one infinite place $v^{\mathrm{split}}_{\infty}$.  We will refer to this case as the indefinite case.
\end{itemize}
We will write $\Sigma_{\rmB}$ as the set of non-archemedean places of $\rmF$ such that $\rmB$ is ramified at. Let $\xi=(\xi_{v})_{v\mid \infty}\in (\ZZ^{3}_{\geq 0})^{\Sigma_{\infty}}$ be an element giving a highest weight of  $\prod_{v\mid\infty}\GSp_{4}(\CC)$. We can regard it as a highest weight of 
\begin{equation*}
\mathbf{G}(\rmB)(\CC)=\mathrm{Res}_{\rmF/\QQ}\mathrm{GU}_{2}(\mathrm{B})(\CC)=\prod_{v\mid \infty} \mathrm{GU}_{2}(\mathrm{B})(\CC).
\end{equation*}
since $\GU_{2}(\rmB)(\CC)\cong\GSp_{4}(\CC)$. We can write each $\xi_{v}$ as $(\xi_{1, v}, \xi_{2, v}; \tilde{\xi}_{v})=(k_{v}-1, l_{v}-2, w_{v})$. We will assume that $\xi_{1, v}+\xi_{2, v}\leq l-2$ or equivalently $k_{v}+l_{v}\leq l+1$. We will also assume that $w_{v}=0$ for each $v\mid \infty$. We will assume that $l$ is unramified in $\rmF$. Fix an isomorphism $\iota_{l}: \CC\cong \overline{\QQ}_{l}$. We will choose the coefficient field $E_{\lambda}$ such that the complex algebraic representation $\xi$ of $\mathrm{Res}_{\rmF/\QQ}\mathrm{GU}_{2}(\mathrm{B})$ is defined over $\iota^{-1}_{l}E_{\lambda}$.

Let $(\overline{\rho}, \psi)$ be a pair with $\overline{\rho}:G_{\rmF}\rightarrow \GSp_{4}(k)$ and $\psi=\epsilon^{-3}_{l}$. Let $\Sigma_{\mathrm{min}}$ and $\Sigma_{\mathrm{lr}}$ be two sets of non-archimedean places of $\rmF$ such that
\begin{itemize}
\item $\Sigma_{\mathrm{min}}$, $\Sigma_{\mathrm{lr}}$ and $\Sigma_{l}$ are mutually disjoint;
\item $\Sigma_{\mathrm{lr}}$ contains $\Sigma_{\mathrm{B}}$;
\item for every $v\in \Sigma_{\mathrm{lr}}$ such that $l\nmid (q^{2}_{v}-1)$.
\end{itemize}

\begin{definition}\label{rigid}
We say $\overline{\rho}$ is rigid for $(\Sigma_{\mathrm{min}}, \Sigma_{\mathrm{lr}})$ if the following are satisfied.
\begin{enumerate}
\item For $v\in \Sigma_{\mathrm{min}}$, every lifting of $\overline{\rho}_{v}$ is minimally ramified as in Definition \ref{min-ram}.
\item For $v\in \Sigma_{\mathrm{lr}}$, the generalized eigenvaules of $\overline{\rho}_{v}$ contain the pair $\{\rmu q_{v}^{-1}, \rmu q_{v}^{-2}\}$ exactly once for $u\in{\pm 1}$.
\item For $v\in\Sigma_{l}$, $\overline{\rho}_{v}$ is regular Fontaine-Laffaille crystalline as in Definition \ref{FL}.
\item For $v\not\in\Sigma_{\mathrm{min}}\cup\Sigma_{\mathrm{lr}}\cup\Sigma_{l}$, the representation $\overline{\rho}_{v}$ is unramified.
\end{enumerate}
\end{definition}

Suppose that $\overline{\rho}$ is rigid for $(\Sigma_{\mathrm{min}}, \Sigma_{\mathrm{lr}})$. We consider a global deformation problem 
\begin{equation*}
\calS=(\overline{\rho}, \epsilon^{-3}_{l}, \Sigma_{\mathrm{min}}\cup \Sigma_{\mathrm{lr}}\cup \Sigma_{l}, \{\calD_{v}\}_{v\in\Sigma_{\mathrm{min}}\cup \Sigma_{\mathrm{lr}} \cup \Sigma_{l}})
\end{equation*}
where 
\begin{itemize}
\item for $v\in\Sigma_{\mathrm{min}}$, $\calD_{v}$ is the local deformation problem classifying all liftings of $\overline{\rho}_{v}$;
\item for $v\in\Sigma_{\mathrm{lr}}$, $\calD_{v}$ is the local deformation problem $\calD^{\mathrm{lr}}_{v}$;
\item for $v\in \Sigma_{l}$, $\calD_{v}$ is the local deformation problem $\calD^{\mathrm{FL}}_{v}$.
\end{itemize}
Then we have the global universal deformation ring $\rmR^{\univ}_{\calS}$ classifying all deformations of $\overline{\rho}$ of type $\calS$. 

We choose $\rmB$ such its ramified place $\Sigma_{\rmB}\subset\Sigma_{\lr}$. We also choose a neat open compact subgroup $\rmK$ of $\mathbf{G}(\rmB)(\mathbb{A}^{\infty}_{\rmF})$ of the form
\begin{equation*}
\rmK=\prod\limits_{v\in\Sigma_{\min}\cup\Sigma_{\lr}}\rmK_{v}\times\prod\limits_{v\not\in\Sigma_{\min}\cup\Sigma_{\lr}}\GSp_{4}(\calO_{\rmF_{v}})
\end{equation*}
where $\rmK_{v}$ is the paramodular subgroup of $\mathrm{GU}_{2}(\rmB_{v})$ for those $v$ ramified in $\rmB$ and $\rmK_{v}$ is the paramodular subgroup of $\mathrm{GSp}_{4}(\rmF_{v})$ for the other $v$ in $\Sigma_{\lr}$, and $\rmK_{v}$ is the {pro-$v$ Iwahori subgroup} of $\GSp_{4}(\rmF_{v})$ for $v\in\Sigma_{\mathrm{min}}$. We have the system of Shimura varieties 
\begin{equation*}
\{\mathrm{Sh}(\rmB, \rmK^{\prime})\}_{\rmK^{\prime}\subset \rmK}
\end{equation*}
associated to $\mathbf{G}(\rmB)$ which are are quasi-projective smooth complex schemes of dimension $d(\rmB)$. When $\rmB$ is definite, then $d(\rmB)=0$ and when $\rmB$ is indefinite, then $d(\rmB)=3$. There is a homorphism
\begin{equation*}
\prod_{v\in\Sigma_{l}}\GSp_{4}(\calO_{\rmF_{v}})\rightarrow \GL(L_{\xi})
\end{equation*}
for a finite free $\calO$-module $L_{\xi}$ associated to $\xi$. This defines an $\calO$-local system $\calL_{\xi}$ on the system of Shimura varieties $\{\mathrm{Sh}(\rmB, \rmK^{\prime})\}_{\rmK^{\prime}\subset \rmK}$. 
Consider the spherical Hecke algebra at $v$
\begin{equation*}
\mathbb{T}_{v}:=\calO[\GSp_{4}(\calO_{\rmF_{v}})\backslash\GSp_{4}(\rmF_{v})/\GSp_{4}(\calO_{\rmF_{v}})].
\end{equation*}
Let $\{\rmT_{v, i}\}_{i=0,1,2}$ be elements given by $[\GSp_{4}(\calO_{\rmF_{v}})\beta_{v, i}\GSp_{4}(\calO_{\rmF_{v}})]$ where
\begin{equation*}
\begin{aligned}
&\beta_{v, 0}=\mathrm{diag}(\varpi_{v},\varpi_{v}, \varpi_{v}, \varpi_{v});\\
&\beta_{v, 1}=\mathrm{diag}(\varpi^{2}_{v},\varpi_{v}, \varpi_{v}, 1);\\
&\beta_{v, 2}=\mathrm{diag}(\varpi_{v},\varpi_{v}, 1, 1).\\
\end{aligned}
\end{equation*}
Then it is known that $\TT_{v}$ is generated by $\rmT_{v, i}$. Let $\Sigma$ be a finite set of places of $\rmF$ containing $\Sigma_{\min}\cup\Sigma_{\lr}$. We define the abstract Hecke algebra to be $\mathbb{T}^{\Sigma}:=\bigotimes\limits_{v\not\in\Sigma}\mathbb{T}_{v}$. Let $\fracm$ be the maximal ideal of $\mathbb{T}^{\Sigma}$ corresponding to $\overline{\rho}$. Thus $\fracm$ contains $\lambda$ and is such that the characteristic polynomial $\mathrm{det}(\rmX-\overline{\rho}(\phi^{-1}_{v}))$ is congruent to the polynomial $\mathrm{Q}_{v}(\rmX)$ modulo $\fracm$ defined by
\begin{equation*}
\rmX^{4}-\rmT_{v,2}\rmX^{3}+(q_{v}\rmT_{v,1}+(q^{3}_{v}+q_{v})\rmT_{v, 0})\rmX^{2}-q^{3}_{v}\rmT_{v, 0}\rmT_{v, 2}\rmX+q^{6}_{v}\rmT^{2}_{v, 0}
\end{equation*}
where $q_{v}=\Vert v\Vert$ is the norm of $v$.

\begin{theorem}\label{main}
Under the above setup, suppose the following assumptions hold.
\begin{itemize}
\item[(D0)] $l\geq 3$ is a prime unramified in $\rmF$. 
\item[(D1)] The weight $(k_{v}, l_{v})_{v\mid\infty}$ of $\pi$ satisfies $l_{v}+k_{v}\leq l+1$;
\item[(D2)] The image $\overline{\rho}_{\pi,\lambda}(\rmG_{\rmF})$ contains $\GSp_{4}(\FF_{l})$;
\item[(D3)] $\overline{\rho}_{\pi, \lambda}$ is rigid for $(\Sigma_{\mathrm{min}}, \Sigma_{\mathrm{lr}})$;
\item[(D4)] Suppose that $\rmB$ is indefinite. For every finite set $\Sigma^{\prime}$ of nonarchimedean places of $\rmF$ containing $\Sigma$, and every open compact subgroup $\rmK^{\prime}\subset \rmK$ satisfying $\rmK^{\prime}_{v}=\rmK_{v}$ for $v\not\in \Sigma^{\prime}$, we have 
\begin{equation*}
\rmH^{d}(\mathrm{Sh}(\rmB, \rmK^{\prime}), \calL_{\xi}\otimes_{\calO}k)_{\fracm^{\prime}}=0
\end{equation*}
for $d\neq 3$ where $\fracm^{\prime}=\fracm\cap \TT^{\Sigma^{\prime}}$.
\end{itemize}
Let $\mathbf{T}$ be the image of $\TT^{\Sigma}$ in $\mathrm{End}_{\calO}(\rmH^{d(\rmB)}(\mathrm{Sh}(\rmB, \rmK), \calL_{\xi}))$ and suppose that $\mathbf{T}_{\fracm}$ is non-zero. Then the following results hold.
\begin{enumerate}
\item There is an isomorphism of complete intersection rings: $\rmR^{\univ}_{\calS}\cong\mathbf{T}_{\fracm}$.
\item The localized cohomology $\rmH^{d(\rmB)}(\mathrm{Sh}(\rmB, \rmK), \calL_{\xi})_{\fracm}$ is finite free module over $\mathbf{T}_{\fracm}$ and hence $\rmH^{d(\rmB)}(\mathrm{Sh}(\rmB, \rmK), \calL_{\xi})_{\fracm}$ is also finite free over $\rmR^{\univ}_{\calS}$.
\end{enumerate}
\end{theorem}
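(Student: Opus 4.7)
The plan is to prove both conclusions simultaneously by the Taylor--Wiles--Kisin patching method, with the localized cohomology $\rmH^{d(\rmB)}(\mathrm{Sh}(\rmB,\rmK),\calL_{\xi})_{\fracm}$ playing the role of the patched module and $\rmR^{\univ}_{\calS}$ playing the role of the patched ring. The Galois-theoretic inputs are the local dimension computations of Proposition~\ref{min-dim}, Proposition~\ref{mix-dim}, and Proposition~\ref{FL-dim}, combined with the presentation of $\rmR^{\rmT}_{\calS}$ from the earlier lemma; the automorphic input is concentration of the auxiliary-level cohomology in degree $d(\rmB)$, supplied by hypothesis (D4) in the indefinite case and automatic in the definite case where $d(\rmB)=0$.

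I would first produce Taylor--Wiles primes. Using the big-image hypothesis (D2) and Chebotarev, for each positive integer $n$ there exists a finite set $\rmQ_{n}$ of auxiliary places $v\notin\Sigma$ with $q_{v}\equiv 1\pmod{l^{n}}$, with $\overline{\rho}_{\pi,\lambda}(\phi_{v})$ having pairwise distinct eigenvalues whose ratios avoid $\{q_{v},q_{v}^{-1}\}$, and such that the augmented dual Selmer group vanishes; moreover $|\rmQ_{n}|$ can be taken equal to $h^{1}_{\calS^{\perp},\rmT}(\rmG_{\rmF,\Sigma},\mathrm{ad}^{0}\overline{\rho}(1))$. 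At each $v\in\rmQ_{n}$ I would impose the local deformation condition requiring tame inertia to land in a fixed maximal torus; this endows the augmented universal ring $\rmR^{\univ}_{\calS_{\rmQ_{n}}}$ with an action of $\calO[\Delta_{n}]$, where $\Delta_{n}=\prod_{v\in\rmQ_{n}}(k(v)^{\times}\otimes\ZZ_{l})$, whose $\Delta_{n}$-coinvariants recover $\rmR^{\univ}_{\calS}$. On the automorphic side, the cohomology at the auxiliary level $\rmK_{1}(\rmQ_{n})$ acquires a compatible $\calO[\Delta_{n}]$-structure whose coinvariants recover the original localized cohomology, via the spherical Hecke factorization encoded in the polynomial $\rmQ_{v}(\rmX)$.

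Combining the three local dimension formulas with the presentation lemma, the integer $g$ needed to present $\rmR^{\rmT}_{\calS_{\rmQ_{n}}}$ over $\rmR^{\rmT,\mathrm{loc}}_{\calS_{\rmQ_{n}}}$ stabilizes at a value that, together with the contribution of $\Delta_{n}$, produces matching Krull dimensions. Following the Kisin--Thorne ultrafilter construction, I would take a compatible lift $S_{\infty}=\calO[[y_{1},\dots,y_{s}]]\twoheadrightarrow\calO[\Delta_{n}]$ and a compatible surjection $\rmR_{\infty}=\rmR^{\rmT,\mathrm{loc}}_{\calS}[[x_{1},\dots,x_{g}]]\twoheadrightarrow\rmR^{\rmT}_{\calS_{\rmQ_{n}}}$, and patch the framed localized cohomology at level $\rmK_{1}(\rmQ_{n})$ into a finitely generated $\rmR_{\infty}$-module $M_{\infty}$ that is finite over $S_{\infty}$. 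Concentration of cohomology in degree $d(\rmB)$ at every auxiliary level, provided by (D4), ensures that $M_{\infty}$ is finite free over $S_{\infty}$. Since $\dim \rmR_{\infty}=\dim S_{\infty}$ by the numerical balance, $M_{\infty}$ is maximal Cohen--Macaulay over $\rmR_{\infty}$; combined with the local complete intersection structure of $\rmR_{\infty}$, Auslander--Buchsbaum forces $M_{\infty}$ to be free over $\rmR_{\infty}$ and the action of $\rmR_{\infty}$ to be faithful. Setting $y_{1}=\cdots=y_{s}=0$ yields the complete intersection isomorphism $\rmR^{\univ}_{\calS}\xrightarrow{\sim}\mathbf{T}_{\fracm}$ and descends freeness of $M_{\infty}$ to the freeness of $\rmH^{d(\rmB)}(\mathrm{Sh}(\rmB,\rmK),\calL_{\xi})_{\fracm}$ over $\mathbf{T}_{\fracm}$.

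The principal obstacle is the propagation of middle-degree concentration along the Taylor--Wiles tower in the indefinite case: (D4) delivers vanishing of $\rmH^{d}(\mathrm{Sh}(\rmB,\rmK'),\calL_{\xi}\otimes_{\calO}k)_{\fracm'}$ for $d\neq 3$ only for $\rmK'\subset\rmK$ with $\rmK'_{v}=\rmK_{v}$ outside a given finite set, so one must verify that each auxiliary Taylor--Wiles level $\rmK_{1}(\rmQ_{n})$ fits this template; granting this, mod-$l$ vanishing bootstraps by a universal coefficient argument to integral vanishing and then to finite freeness of $M_{\infty}$ over $S_{\infty}$, which is the hinge that converts numerical balance into both $R=T$ and the freeness assertion.
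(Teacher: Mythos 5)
Your proposal follows essentially the same route as the paper's proof: Taylor--Wiles patching using the local dimension computations of Propositions \ref{min-dim}, \ref{mix-dim} and \ref{FL-dim}, concentration of cohomology in degree $d(\rmB)$ supplied by (D4) along the auxiliary tower (and indeed each $\rmK_{1}(\rmQ_{n})$ fits the template of (D4) since $\rmK_{v,1}\subset\GSp_{4}(\calO_{\rmF_{v}})$), the numerical coincidence $\dim\rmR_{\infty}=\dim S_{\infty}=2q+11\vert\rmS\vert$, and the Auslander--Buchsbaum endgame yielding freeness and $\rmR=\rmT$ simultaneously. The only imprecisions are minor: for $\GSp_{4}$ each Taylor--Wiles prime contributes \emph{two} copies of $k^{\times}_{v}(l)$ (the paper's $\Delta_{v}=k^{\times}_{v}(l)^{2}$, whence the $2q$ variables), and the final step needs $\rmR_{\infty}$ to be \emph{regular} (which holds because each local deformation ring entering $\rmR^{\rmS,\mathrm{loc}}_{\calS}$ is formally smooth), not merely a local complete intersection.
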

\begin{remark}
We remark that the big image assumption $(\mathrm{D1})$ is for simplicity. In fact it suffices to assume that the representation $\overline{\rho}$ is vast and tidy in the sense of \cite[Definition 7.5.6, 7.5.11]{BCGP} which is weaker assumption.
\end{remark}

\subsection{Taylor-Wiles System} We fix a global deformation problem 
\begin{equation*}
\calS=(\overline{\rho}, \epsilon^{-3}_{l}, S=\Sigma_{\mathrm{min}}\cup \Sigma_{\mathrm{lr}}\cup \Sigma_{\mathrm{l}}, \{\calD_{v}\}_{v\in\Sigma_{\mathrm{min}}\cup \Sigma_{\mathrm{lr}} \cup \Sigma_{l}}).
\end{equation*} 
Then we define a \emph{Taylor-Wiles datum} $(\rmQ, (\overline{\alpha}_{v}, \overline{\beta}_{v}, \overline{\gamma_{v}}, \overline{\delta_{v}}))$  for $\calS$ consisting of the following:
\begin{itemize}
\item $\rmQ$ is finite set of primes of $\rmF$ away from $S=\Sigma_{\min}\cup\Sigma_{\lr}\cup\Sigma_{l}$, such that $q_{v}\equiv 1\mod l$ for all $v\in \rmQ$;
\item For each $v\in\rmQ$, the Frobenius eigenvalues $(\overline{\alpha}_{v}, \overline{\beta}_{v}, \overline{\gamma}_{v}=\overline{\beta}^{-1}_{v}, \overline{\delta}_{v}=\overline{\alpha}^{-1}_{v})$ of $\overline{\rho}(\phi^{-1}_{v})$ are pairwise distinct and $k$-rational.
\end{itemize}
Recall that $\Iw(v)$ is the Iwahori subgroup of $\GSp_{4}(\calO_{\rmF_{v}})$ and $\Iw_{1}(v)$ is pro-$v$ Iwahori subgroup. Let $\rmT\subset  \GSp_{4}$ be the diagonal torus of $\GSp_{4}$ and $\rmW$ be the absolute Weyl group of $\GSp_{4}$. Then we have $\Iw(v)/\Iw_{1}(v)\cong \rmT(k_{v})$ where $k_{v}$ is the residue field of $v$.
\begin{itemize}
\item Let $\Delta_{v}=k^{\times}_{v}(l)^{2}$ where $k^{\times}_{v}(l)$ is the maximal $l$-power quotient of $k^{\times}_{v}$;
\item Let $\Delta_{\rmQ}=\prod_{v}\Delta_{v}$ and $\fraca_{\rmQ}$ be the augmentation ideal of $\ZZ[\Delta_{\rmQ}]$;
\item Let $\rmK_{v, 0}=\Iw(v)$ and $\rmK_{v, 1}$ be the kernel of the composite map
\begin{equation*}
\rmK_{v, 0}\rightarrow \rmT(k_{v})\cong (k^{\times}_{v})^{2}\rightarrow k^{\times}_{v}(l)^{2}
\end{equation*}
and the kernel of this map is denoted by $\rmK_{v, 1}$. Thus we have $\rmK_{v, 0}/\rmK_{v, 1}\cong \Delta_{v}$;
\item Let $\rmK_{i}(\rmQ)=\prod\limits_{v\not\in\rmQ}\rmK_{v}\times\prod\limits_{v\in\rmQ}\rmK_{v, i}$ for $i=0,1$. Then we have
\begin{equation*}
\rmK_{0}(\rmQ)/\rmK_{1}(\rmQ)\cong \Delta_{\rmQ}.
\end{equation*}
Notice that when $\rmQ=\emptyset$, then $\rmK_{0}(\rmQ)=\rmK_{1}(\rmQ)=\rmK$;
\item Let $\fracm_{\rmQ}$ be the maximal ideal of $\TT^{\Sigma\cup\rmQ}$ defined by $\fracm=\TT^{\Sigma\cup\rmQ}\cap \fracm$;
\item Finally, we let
\begin{equation*}
\rmH_{i}(\rmQ):=\Hom_{\calO}(\rmH^{d(\rmB)}(\mathrm{Sh}(\rmB, \rmK_{i}(\rmQ)), \calL_{\xi}), \calO)
\end{equation*}
for $i\in\{0,1\}$. It follows then $\rmH_{1}(\rmQ)$ is a module over $\calO[\Delta_{\rmQ}]$
\end{itemize}
\begin{lemma}\label{H-tw}
Under the assumptions of Theorem \ref{main}. The module $\rmH_{1}(\rmQ)_{\fracm_{\rmQ}}$ is finite free over $\calO[\Delta_{\rmQ}]$. Moreover $\rmH_{1}(\rmQ)_{\fracm_{\rmQ}}/\fraca_{\rmQ}=\rmH_{0}(\rmQ)_{\fracm_{\rmQ}}$.
\end{lemma}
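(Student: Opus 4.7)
The plan is to exploit that $Y := \mathrm{Sh}(\rmB, \rmK_{1}(\rmQ)) \to X := \mathrm{Sh}(\rmB, \rmK_{0}(\rmQ))$ is a finite étale Galois cover with deck group $\Delta_{\rmQ}$, the freeness of the action on geometric points being guaranteed by the neatness of $\rmK$. In the definite case $d(\rmB) = 0$ the Shimura varieties are finite sets and $\rmH^{0}(Y, \calL_{\xi})_{\fracm_{\rmQ}}$ is tautologically free over $\calO[\Delta_{\rmQ}]$ with coinvariants equal to $\rmH^{0}(X, \calL_{\xi})_{\fracm_{\rmQ}}$; $\calO$-dualizing yields the claims immediately. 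I henceforth focus on the indefinite case $d(\rmB) = 3$.

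The next step is to upgrade (D4) to an integral vanishing. Applying (D4) with $\Sigma^{\prime} = \Sigma \cup \rmQ$ and $\rmK^{\prime} \in \{\rmK_{0}(\rmQ), \rmK_{1}(\rmQ)\}$, the long exact sequence obtained by tensoring with $0 \to \calO \xrightarrow{\lambda} \calO \to k \to 0$, together with Nakayama's lemma, forces $\rmH^{d}(\mathrm{Sh}(\rmB, \rmK^{\prime}), \calL_{\xi})_{\fracm_{\rmQ}} = 0$ for $d \neq 3$ and $\rmH^{3}(\mathrm{Sh}(\rmB, \rmK^{\prime}), \calL_{\xi})_{\fracm_{\rmQ}}$ to be $\calO$-flat. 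I would then feed this into the Hochschild--Serre spectral sequence for $Y \to X$
\begin{equation*}
E_{2}^{p,q} = \rmH^{p}\bigl(\Delta_{\rmQ}, \rmH^{q}(Y, \calL_{\xi})_{\fracm_{\rmQ}}\bigr) \Rightarrow \rmH^{p+q}(X, \calL_{\xi})_{\fracm_{\rmQ}},
\end{equation*}
where localization at $\fracm_{\rmQ}$ commutes with group cohomology because $\TT^{\Sigma \cup \rmQ}$ and $\Delta_{\rmQ}$ act through disjoint sets of places of $\rmF$. Only the row $q = 3$ survives and the abutment is concentrated in total degree $3$, so $\rmH^{p}(\Delta_{\rmQ}, \rmH^{3}(Y, \calL_{\xi})_{\fracm_{\rmQ}}) = 0$ for all $p > 0$ and the edge map furnishes an isomorphism $\rmH^{3}(Y, \calL_{\xi})_{\fracm_{\rmQ}}^{\Delta_{\rmQ}} \cong \rmH^{3}(X, \calL_{\xi})_{\fracm_{\rmQ}}$.

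Since $\Delta_{\rmQ}$ is a finite abelian $l$-group, $\calO[\Delta_{\rmQ}]$ is a complete Noetherian local ring, and a finitely generated $\calO$-flat $\calO[\Delta_{\rmQ}]$-module $M$ is free precisely when $\rmH^{1}(\Delta_{\rmQ}, M) = 0$ (equivalently $\mathrm{Tor}_{1}^{\calO[\Delta_{\rmQ}]}(k, M) = 0$, via the local criterion of flatness). Applied to $M = \rmH^{3}(Y, \calL_{\xi})_{\fracm_{\rmQ}}$ this gives $\calO[\Delta_{\rmQ}]$-freeness, preserved by $\calO$-linear duality since $\calO[\Delta_{\rmQ}]$ is a Frobenius algebra, so $\rmH_{1}(\rmQ)_{\fracm_{\rmQ}}$ is $\calO[\Delta_{\rmQ}]$-free. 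Applying the natural identification $(N^{\vee})_{\Delta_{\rmQ}} \cong (N^{\Delta_{\rmQ}})^{\vee}$, valid for finitely generated $\calO$-flat $\calO[\Delta_{\rmQ}]$-modules $N$, to $N = \rmH^{3}(Y, \calL_{\xi})_{\fracm_{\rmQ}}$ and combining with the edge isomorphism yields $\rmH_{1}(\rmQ)_{\fracm_{\rmQ}}/\fraca_{\rmQ} \cong \rmH_{0}(\rmQ)_{\fracm_{\rmQ}}$. The main obstacle I anticipate is the bookkeeping of Hecke-, $\calO$-, and $\Delta_{\rmQ}$-equivariance throughout these reductions and dualizations, ensuring the final identification is canonical rather than merely an abstract isomorphism; the other ingredients are formal.
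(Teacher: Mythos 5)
Your overall strategy is the right one, and it is essentially the argument behind the reference the paper invokes ([LTXZZa, Lemma 3.6.5]): use (D4) with $\Sigma'=\Sigma\cup\rmQ$ plus the $\lambda$-multiplication long exact sequence and Nakayama to concentrate the integral cohomology of both $\mathrm{Sh}(\rmB,\rmK_{0}(\rmQ))$ and $\mathrm{Sh}(\rmB,\rmK_{1}(\rmQ))$ in degree $d(\rmB)$ and make it $\calO$-flat, then exploit that the transition is a free $\Delta_{\rmQ}$-cover. The definite case and the dualization bookkeeping are fine.

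There is, however, one step whose stated justification is wrong: the claim that a finitely generated $\calO$-flat $\calO[\Delta_{\rmQ}]$-module $M$ is free ``precisely when $\rmH^{1}(\Delta_{\rmQ},M)=0$,'' and the asserted equivalence of this with $\mathrm{Tor}_{1}^{\calO[\Delta_{\rmQ}]}(k,M)=0$. This is false: take $\Delta_{\rmQ}=\ZZ/l$ acting trivially on $M=\calO$; then $\rmH^{1}(\Delta_{\rmQ},M)=\Hom(\ZZ/l,\calO)=0$, but $M$ is not free (indeed $\rmH^{2}(\Delta_{\rmQ},M)\cong\calO/l\neq 0$ and $\mathrm{Tor}_{1}^{\calO[\Delta_{\rmQ}]}(k,M)\neq 0$). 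Vanishing of $\rmH^{1}$ alone controls only half of the (periodic) Tate cohomology and does not detect flatness. The correct criterion is Nakayama's: for an $l$-group $\Delta_{\rmQ}$ and $M$ finite free over $\calO$, vanishing of $\hat{\rmH}^{i}(\Delta_{\rmQ},M)$ in two consecutive degrees implies $M$ is cohomologically trivial, hence projective, hence free over the local ring $\calO[\Delta_{\rmQ}]$. Your argument is rescued by the fact that you actually proved more than you used: the degenerate Hochschild--Serre spectral sequence gives $\rmH^{p}(\Delta_{\rmQ},\rmH^{3}(Y,\calL_{\xi})_{\fracm_{\rmQ}})=0$ for \emph{all} $p>0$, in particular for $p=1,2$, so the corrected criterion applies and the rest of the proof (the edge isomorphism onto $\rmH^{3}(X,\calL_{\xi})_{\fracm_{\rmQ}}$ and the duality identifying invariants with coinvariants) goes through unchanged. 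Equivalently, you could bypass group cohomology entirely by noting that $R\Gamma(\mathrm{Sh}(\rmB,\rmK_{1}(\rmQ)),\calL_{\xi})_{\fracm_{\rmQ}}$ is a perfect complex of $\calO[\Delta_{\rmQ}]$-modules whose derived reduction modulo the maximal ideal is concentrated in one degree, which forces it to be a finite free module placed in that degree. One further point you should not dismiss as pure neatness: for the deck group of $\mathrm{Sh}(\rmB,\rmK_{1}(\rmQ))\to\mathrm{Sh}(\rmB,\rmK_{0}(\rmQ))$ to be all of $\Delta_{\rmQ}$ acting freely, one must also check that the image of $Z(\QQ)\cap\rmK_{0}(\rmQ)$ in $\Delta_{\rmQ}$ is trivial; this is a standard but nonvacuous verification in the quaternionic unitary setting.
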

\begin{proof}
This follows from the same proof of \cite[Lemma 3.6.5]{LTXZZa}.
\end{proof}
Given a Taylor-Wiles datum Taylor-Wiles datum $(\rmQ, (\overline{\alpha}_{v}, \overline{\beta}_{v}, \overline{\gamma_{v}}, \overline{\delta_{v}}))$, we define an augmented global deformation problem
\begin{equation*}
\calS_{\rmQ}=(\overline{\rho}, \epsilon^{-3}, \rmS\cup\rmQ,\{\calD_{v}\}_{v\in\rmS}\cup\{\calD^{\square}_{v}\}_{v\in\rmQ}).
\end{equation*}
For $v\in\rmQ$, let $\overline{\chi}_{\ast}$ for $\ast=\overline{\alpha}_{v}, \overline{\beta}_{v}, \overline{\gamma}_{v}=\overline{\beta}^{-1}_{v}, \overline{\delta}_{v}=\overline{\alpha}^{-1}_{v}$ be the unramified character sending $\phi^{-1}_{v}$ to $\ast=\overline{\alpha}_{v}, \overline{\beta}_{v}, \overline{\gamma}_{v}=\overline{\beta}^{-1}_{v}, \overline{\delta}_{v}=\overline{\alpha}^{-1}_{v}$. We have the following Lemma concerning the local deformation problem $\calD^{\square}_{v}$. 
\begin{lemma}\label{tw-lift}
Let $\rho_{v}:\rmG_{\rmF_{v}}\rightarrow \GSp_{4}(A)$ be any lift of $\overline{\rho}_{v}$. There are unique continuous characters $\chi_{\ast}: \rmG_{\rmF_{v}}\rightarrow A^{\times}$ for $\ast=\overline{\alpha}_{v}, \overline{\beta}_{v}, \overline{\gamma}_{v}, \overline{\delta}_{v}$ lifting the characters such that $\rho_{v}$ is conjugate to the lift of the form $\chi_{\overline{\alpha}_{v}}\oplus\chi_{\overline{\beta}_{v}}\oplus \epsilon^{-3}_{l}\chi_{\overline{\beta}^{-1}_{v}}\oplus \epsilon^{-3}_{l}\chi_{\overline{\alpha}^{-1}_{v}}$. It follows from this that $\Spf\phantom{.}\rmR^{\square}_{v}$ is formally smooth of relative dimension $10$ over $\calO[\Delta_{v}]$. 
\end{lemma}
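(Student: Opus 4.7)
The plan is to reduce any lift $\rho_v$ of $\overline{\rho}_v$ to an explicit diagonal form using the pairwise distinct $k$-rational Frobenius eigenvalues, and then read off the formal smoothness from local class field theory.

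First I would show that $\rho_v$ factors through the tame quotient $\rmT_{\rmF_v}$. Since $v\in\rmQ$ is disjoint from $\rmS$, $\overline{\rho}_v$ is unramified and $\overline{\rho}_v(\rmP_{\rmF_v})=1$. For any lift $\rho_v$ to $A\in\CNL_{\calO}$, the image $\rho_v(\rmP_{\rmF_v})$ lies in the pro-$l$ group $\ker(\GSp_4(A)\to\GSp_4(k))$; but $\rmP_{\rmF_v}$ is pro-$p$ with $p\neq l$, which forces $\rho_v|_{\rmP_{\rmF_v}}=1$.

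Next I would diagonalize using Lemma~\ref{decomp}(1): the pairwise distinct $k$-rational residual eigenvalues yield a unique decomposition $A^4=\rmM_1\oplus\rmM_2\oplus\rmM_3\oplus\rmM_4$ into rank-one $A$-submodules stable under $\rho_v(\phi_v)$, on which $\rho_v(\phi_v^{-1})$ acts as scalars $\alpha_i$ lifting $\overline{\alpha}_v, \overline{\beta}_v, \overline{\beta}_v^{-1}, \overline{\alpha}_v^{-1}$. To propagate this to tame inertia, I would write $\rho_v(t)=\exp(Y)$ (valid since $\rho_v(t)\equiv 1\pmod{\fracm_A}$) and use $\rho_v(\phi_v)\rho_v(t)\rho_v(\phi_v)^{-1}=\rho_v(t)^{q_v}$, which gives $\mathrm{Ad}(\rho_v(\phi_v))Y = q_v Y$. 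The $(i,j)$-block then satisfies $(\alpha_i\alpha_j^{-1}-q_v)Y_{ij}=0$; for $i\neq j$, this scalar is a unit in $A$ (reducing to $\overline{\alpha}_i\overline{\alpha}_j^{-1}-1\neq 0$ in $k$, using $q_v\equiv 1\pmod l$ and the distinctness hypothesis). Hence $Y$ is diagonal, and $\rho_v$ splits as a direct sum of four characters $\chi_i$; the similitude relation $\nu\rho_v=\epsilon_l^{-3}$ pairs them up to give the claimed form $\chi_{\overline{\alpha}_v}\oplus\chi_{\overline{\beta}_v}\oplus\epsilon_l^{-3}\chi_{\overline{\beta}_v^{-1}}\oplus\epsilon_l^{-3}\chi_{\overline{\alpha}_v^{-1}}$, with uniqueness inherited from the uniqueness of the eigenspace decomposition.

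For the formal smoothness claim, I would observe that $\calD_v^\square$ is a torsor under the formal quotient $\widehat{\GSp_4}/\widehat{\rmT}$ (with $\rmT$ the diagonal maximal torus, of dimension $3$) over the subfunctor of diagonal lifts, contributing a formally smooth factor of relative dimension $\dim\GSp_4-\dim\rmT=8$ over $\calO$. The diagonal subfunctor is classified by pairs of characters $(\chi_{\overline{\alpha}_v},\chi_{\overline{\beta}_v})\colon\rmG_{\rmF_v}\to A^\times$ lifting the fixed residual characters. By local class field theory, the lifts of a single unramified residual character of $\rmG_{\rmF_v}$ to pro-$l$ coefficients are parametrized by $\calO[k_v^\times(l)][[X]]$; with two characters one obtains $\calO[\Delta_v][[X_1, X_2]]$, the $\Delta_v$-structure precisely matching the tame-inertia part via the local Artin map. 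Combining with the torsor piece yields $\rmR_v^\square\cong\calO[\Delta_v][[X_1, X_2, Y_1, \ldots, Y_8]]$, which is formally smooth of relative dimension $10$ over $\calO[\Delta_v]$. The main obstacle I anticipate is in this last step: justifying carefully that the splitting of $\calD_v^\square$ into the torsor piece and the diagonal-character piece is a genuine factorization of $\rmR_v^\square$ (rather than only a compatible tangent-space decomposition), and that the resulting $\Delta_v$-algebra structure aligns with the $\rmK_{v,0}/\rmK_{v,1}$-structure used on the automorphic side of the patching. The Galois-theoretic content in the first two steps is essentially a component-by-component application of Lemma~\ref{decomp}.
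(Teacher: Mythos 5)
Your argument is correct, and it is essentially the standard analysis of a Taylor--Wiles prime; the paper itself does not write this argument out but simply cites \cite[Lemma 5.1.1]{GT} for the first assertion and declares the second ``straightforward,'' so your proposal amounts to supplying the proof behind that citation. Three small points. First, you invoke Lemma \ref{decomp}(1), but its blanket hypothesis $q^{2}\not\equiv 1 \bmod l$ \emph{fails} for Taylor--Wiles primes, where $q_{v}\equiv 1 \bmod l$; this is harmless because part (1) is pure idempotent-lifting for the single operator $\rho_{v}(\phi_{v})$ and never uses that hypothesis, but you should either say so or argue the eigenspace lifting directly rather than cite a lemma whose stated hypotheses are violated. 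Second, writing $\rho_{v}(t)=\exp(Y)$ introduces unnecessary denominators; it is cleaner to set $\rho_{v}(t)=1+U$ with $U\in\fracm_{A}M_{4}(A)$, expand $\mathrm{Ad}(\rho_{v}(\phi_{v}))(1+U)=(1+U)^{q_{v}}$, and kill the off-diagonal blocks of $U$ by induction on powers of $\fracm_{A}$ using that $a_{i}a_{j}^{-1}-q_{v}$ is a unit (its reduction is $\overline{a}_{i}\overline{a}_{j}^{-1}-1\neq 0$ since $q_{v}\equiv 1$ and the residual eigenvalues are pairwise distinct). Third, the factorization you worry about at the end is not a real obstacle: because $\overline{\rho}_{v}(\phi_{v})$ is regular semisimple with $k$-rational eigenvalues, its centralizer in $\GSp_{4}$ is exactly the diagonal torus $\rmT$, so the map $(\text{diagonal lift},\, g)\mapsto g\rho g^{-1}$ from $\calD^{\mathrm{diag}}_{v}\times\widehat{\GSp_{4}}/\widehat{\rmT}$ to $\calD^{\square}_{v}$ is an isomorphism of functors, not merely of tangent spaces; this is the same splitting used throughout the Taylor--Wiles literature, and it gives $\rmR^{\square}_{v}\cong\calO[\Delta_{v}][[X_{1},X_{2},Y_{1},\dots,Y_{8}]]$ with the $\calO[\Delta_{v}]$-structure induced by $\chi_{\overline{\alpha}_{v}}\circ\mathrm{Art}_{\rmF_{v}}$ and $\chi_{\overline{\beta}_{v}}\circ\mathrm{Art}_{\rmF_{v}}$ on $\calO_{\rmF_{v}}^{\times}$, exactly matching the $\Delta_{v}=k_{v}^{\times}(l)^{2}$ used on the automorphic side.
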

\begin{proof}
The first part of the lemma follows from \cite[Lemma 5.1.1]{GT}. Let $\rho^{\univ}_{v}:\rmG_{\rmF_{v}}\rightarrow\GSp_{4}(\rmR^{\square}_{v})$ be the universal lift. Then $\rho^{\univ}_{v}$ is conjugate to a lift of the form $\chi_{\overline{\alpha}_{v}}\oplus\chi_{\overline{\beta}_{v}}\oplus \epsilon^{-3}_{l}\chi_{\overline{\beta}^{-1}_{v}}\oplus \epsilon^{-3}_{l}\chi_{\overline{\alpha}^{-1}_{v}}$. Note that the characters $\chi_{\overline{\alpha}_{v}}\circ\mathrm{Art}_{\rmF_{v}}\vert_{\calO^{\times}_{\rmF_{v}}} :\calO^{\times}_{\rmF_{v}}\rightarrow \rmR^{\square}_{v}$ and $\chi_{\overline{\beta}_{v}}\circ\mathrm{Art}_{\rmF_{v}}\vert_{\calO^{\times}_{\rmF_{v}}}: \calO^{\times}_{\rmF_{v}}\rightarrow \rmR^{\square}_{v}$ which  factor through $k^{\times}_{v}(l)$. Therefore there is a local $\calO$-algebra morphism $\calO[\Delta_{v}]\rightarrow \rmR^{\square}_{v}$. It is straightforward to check that this morphism is formally smooth of relative dimension $10$.  
\end{proof}

\begin{lemma}\label{tw-primes}
Suppose all the assumptions in Theorem \ref{main}.  For every integer 
\begin{equation*}
q\geq \dim_{k}\rmH^{1}(\rmG_{\rmF, \rmS}, \mathrm{ad}\overline{\rho}(1))
\end{equation*}
and every integer $n\geq 1$, there is Taylor-Wiles datum  $(\rmQ_{n}, (\overline{\alpha}_{v}, \overline{\beta}_{v}, \overline{\gamma_{v}}, \overline{\delta_{v}})_{v\in\rmQ_{n}})$ satisfying
\begin{enumerate}
\item $\vert\rmQ_{n}\vert=q$;
\item $ q_{v} \equiv 1\mod l^{n}$;
\item $\rmR^{\rmS}_{\calS_{\rmQ_{n}}}$ can be topologically generated over $\rmR^{\mathrm{loc}, \rmS}_{\calS}$ by
\begin{equation*}
g=2q-4[\rmF: \QQ]+\vert \rmS\vert-1.
\end{equation*}
elements.
\end{enumerate}
\end{lemma}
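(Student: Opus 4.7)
The plan is to apply the Galois-cohomological presentation lemma above to the augmented deformation problem $\calS_{\rmQ_{n}}$, taking the framing set to be the original $\rmS$; since the imposed condition at each $v\in\rmQ_{n}$ is $\calD^{\square}_{v}$, the hypothesis of that lemma is met and one obtains
\begin{equation*}
\begin{aligned}
g&=h^{1}_{\calS^{\perp}_{\rmQ_{n}}, \rmS}(\rmG_{\rmF, \rmS\cup\rmQ_{n}}, \mathrm{ad}^{0}\overline{\rho}(1))-h^{0}(\rmG_{\rmF, \rmS\cup\rmQ_{n}}, \mathrm{ad}^{0}\overline{\rho}(1))\\
&\quad -\sum_{v\mid\infty}h^{0}(\rmG_{\rmF_{v}}, \mathrm{ad}^{0}\overline{\rho})+\sum_{v\in\rmQ_{n}}h^{0}(\rmG_{\rmF_{v}}, \mathrm{ad}^{0}\overline{\rho}(1))+\vert\rmS\vert-1.
\end{aligned}
\end{equation*}

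I would then evaluate each $h^{0}$ term. Assumption (D2), namely that $\overline{\rho}(\rmG_{\rmF})\supset\GSp_{4}(\FF_{l})$, combined with the irreducibility of $\mathfrak{sp}_{4}(\FF_{l})$ as a representation of $\Sp_{4}(\FF_{l})$ for $l\geq 3$, forces the first $h^{0}$ term to vanish. The oddness relation $\nu\circ\overline{\rho}(c)=\epsilon^{-3}_{l}(c)=-1$ at a complex conjugation $c$ places $\overline{\rho}(c)$ in the non-neutral component of $\GSp_{4}$; a direct block computation identifies the $c$-fixed subspace of $\mathfrak{sp}_{4}$ with the Lie algebra of a maximal compact $\mathrm{U}(2)\subset \Sp_{4}(\RR)$, of dimension $4$, so $\sum_{v\mid\infty}h^{0}(\rmG_{\rmF_{v}}, \mathrm{ad}^{0}\overline{\rho})=4[\rmF:\QQ]$. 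For each $v\in\rmQ_{n}$ the congruence $q_{v}\equiv 1\mod l$ trivializes the Tate twist, and the assumed distinctness of the eigenvalues $(\overline{\alpha}_{v}, \overline{\beta}_{v}, \overline{\beta}^{-1}_{v}, \overline{\alpha}^{-1}_{v})$ of $\overline{\rho}(\phi^{-1}_{v})$ identifies $(\mathrm{ad}^{0}\overline{\rho})^{\phi_{v}}$ with the Cartan subalgebra of $\mathfrak{sp}_{4}$, of dimension $2$; this sum therefore contributes $2q$. Substituting, the formula collapses to $g=h^{1}_{\calS^{\perp}_{\rmQ_{n}}, \rmS}+2q-4[\rmF:\QQ]+\vert\rmS\vert-1$, so the lemma reduces to choosing $\rmQ_{n}$ with $h^{1}_{\calS^{\perp}_{\rmQ_{n}}, \rmS}(\rmG_{\rmF, \rmS\cup\rmQ_{n}}, \mathrm{ad}^{0}\overline{\rho}(1))=0$.

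By inflation-restriction this kernel injects into the subspace of $\rmH^{1}(\rmG_{\rmF, \rmS}, \mathrm{ad}^{0}\overline{\rho}(1))$ consisting of classes restricting to zero at every $v\in\rmQ_{n}$, so it suffices to find $q$ primes $v$ satisfying (1)--(2) whose local restrictions separate a basis of this finite-dimensional space. This is a Chebotarev density argument inside $\Gal(\rmF(\overline{\rho},\zeta_{l^{n}})/\rmF)$: the condition $q_{v}\equiv 1\mod l^{n}$ is exactly the requirement that $\phi_{v}$ act trivially on $\zeta_{l^{n}}$, while hypothesis (D2) supplies regular semisimple elements $\overline{\sigma}\in\GSp_{4}(\FF_{l})$ of the prescribed split symplectic form whose conjugates detect any nonzero cocycle class on $\mathrm{ad}^{0}\overline{\rho}(1)$. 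The main obstacle is this final detection step, where one must simultaneously enforce the cyclotomic condition and nontriviality of every prescribed class at the chosen prime. It is the $\GSp_{4}$-analogue of the classical Taylor-Wiles prime construction and proceeds exactly as in \cite[Lemma 3.6.6]{LTXZZa}, the big image hypothesis (D2) ruling out cohomological obstructions coming from $\rmH^{1}(\GSp_{4}(\FF_{l}), \mathrm{ad}^{0}\overline{\rho}(1))$ and guaranteeing that the required Chebotarev classes exist.
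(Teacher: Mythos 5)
Your argument is correct and is essentially the proof of the result the paper simply cites here (\cite[Corollary 7.6.3]{BCGP}): the presentation formula with framing set $\rmT=\rmS$, the vanishing of $h^{0}(\rmG_{\rmF},\mathrm{ad}^{0}\overline{\rho}(1))$ from (D2), the count $4[\rmF:\QQ]$ at infinity from $\GSp_{4}$-oddness, the $2q$ from regular semisimple Frobenii at Taylor--Wiles primes, and the Chebotarev annihilation of the dual Selmer group are exactly the steps of that proof. No discrepancy with the paper's (one-line) proof.
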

\begin{proof}
This is exactly \cite[Corollary 7.6.3]{BCGP}
\end{proof}

We now recall a few facts about the pro-$v$ Iwahori Hecke algebra and the Iwahori Hecke algbebra.
\begin{itemize}
\item Let $\TT^{\Iw_{1}}_{v}=\calO[\Iw_{1}(v)\backslash\GSp_{4}(\rmF_{v})/\Iw_{1}(v)]$ be the pro-$v$ Iwahori Hecke algebra. There is an injection $\calO[\rmT(\rmF_{v})/\rmT(\calO_{\rmF_{v}})_{1}]\hookrightarrow \TT^{\Iw_{1}}_{v}$ with $\rmT(\calO_{\rmF_{v}})_{1}=\ker(\rmT(\calO_{\rmF_{v}})\rightarrow \rmT(k_{v}))$ defined as in \cite[2.4.29]{BCGP}.

\item Let $\TT^{\Iw}_{v}=\calO[\Iw(v)\backslash\GSp_{4}(\rmF_{v})/\Iw(v)]$ be the Iwahori Hecke algebra. There is an injection $\calO[\rmT(\rmF_{v})/\rmT(\calO_{\rmF_{v}})]\hookrightarrow \TT^{\Iw}_{v}$ coming from the Bernstein presentation of $\TT^{\Iw}_{v}$. We will write $e_{\mathrm{Sph}}(v)$ for the Hecke operator $[\GSp_{4}(\calO_{\rmF_{v}})1\GSp_{4}(\calO_{\rmF_{v}})]$.

\item Let $\overline{\alpha}_{v}, \overline{\beta}_{v}$ be two elements of $k^{\times}$. Let $\fracm_{\overline{\alpha}_{v}, \overline{\beta}_{v}}$ be the kernel of the homomorphism $\calO[\rmT(\rmF_{v})/\rmT(\calO_{\rmF_{v}})_{1}]\rightarrow k$ induced by the character $\rmT(\rmF_{v})/\rmT(\calO_{\rmF_{v}})_{1}\rightarrow k^{\times}$ sending $\rmT(\calO_{\rmF_{v}})\mapsto 1$, $\beta_{v, 0}\mapsto 1$, $\beta_{v, 1}\mapsto \overline{\alpha}_{v}$ and $\beta_{v, 2}\mapsto \overline{\beta}_{v}$. Note $\fracm_{\overline{\alpha}_{v}, \overline{\beta}_{v}}$ can also be viewed as a maximal ideal of $\calO[\rmT(\rmF_{v})/\rmT(\calO_{\rmF_{v}})]\cong\calO[\rmX_{\ast}(\rmT)]$.
\end{itemize}

\begin{lemma}\label{Iwahori-Hecke}
Let $\rmM$ be a $\TT^{\Iw}_{v}\otimes_{\calO}k$-module which is finite dimensional over $k$.
\begin{enumerate}
\item Suppose that $e_{\mathrm{Sph}(v)}\rmM\neq 0$, and that there is a triple $\gamma_{0}, \gamma_{1}, \gamma_{2}$ with $\gamma^{2}_{0}\gamma_{1}\gamma_{2}=1$ such that $(\gamma_{1}-1)(\gamma_{2}-1)(\gamma_{1}-\gamma_{2})(\gamma_{1}\gamma_{2}-1)\neq 0$. Equivalently, writing $\overline{\alpha}_{v}=\gamma_{0}$ and $\overline{\beta}_{v}=\gamma_{0}\gamma_{1}$, suppose that $\overline{\alpha}^{\pm1}_{v}$ and $\overline{\beta}^{\pm1}_{v}$ are pairwise distinct. Suppose that the following operators are zero on the module $e_{\mathrm{Sph}(v)}\rmM$:
\begin{equation*}
\rmT_{v,0}-1, \rmT_{v, 2}-e_{1}(\gamma_{0}, \gamma_{1}, \gamma_{2}), \rmT_{v, 1}+2\rmT_{v,0}-e_{2}(\gamma_{0}, \gamma_{1}, \gamma_{2}).
\end{equation*}
Then for each $w\in\rmW$, the maximal ideal $\fracm_{w}=(x_{i}-(w\gamma)_{i})_{i=0,1,2}\in k[\rmX_{\ast}(\rmT)]$ is in the support of $\rmM$.
\item Conversely, suppose for each maximal ideal $\mathfrak{n}\in k[\rmX_{\ast}(\rmT)]^{\rmW}$ in the support of $\rmM$, the degree $4$ polynomial
\begin{equation*}
\sum e_{i}(x_{0}, x_{1}, x_{2})\rmX^{i}=(\rmX-x_{0})(\rmX-x_{0}x_{1})(\rmX-x_{0}x_{2})(\rmX-x_{0}x_{1}x_{2})
\end{equation*} 
in $k[\rmX_{\ast}(\rmT)]^{\rmW}[\rmX]$ has roots $\gamma_{0}, \gamma_{0}\gamma_{1}, \gamma_{0}\gamma_{1}\gamma_{2}$ modulo $\fracn$ satisfying $(\gamma_{1}-1)(\gamma_{2}-1)(\gamma_{1}-\gamma_{2})(\gamma_{1}\gamma_{2}-1)\neq 0$ and that $\gamma^{2}_{0}\gamma_{1}\gamma_{2}=1$. Equivalently, writing $\gamma_{0}=\overline{\alpha}_{v}$, $\gamma_{0}\gamma_{1}=\overline{\beta}_{v}$, assume that $\gamma^{2}_{0}\gamma_{1}\gamma_{2}=1$ and that
$\overline{\alpha}_{v}, \overline{\beta}_{v}, \overline{\alpha}^{-1}_{v}, \overline{\beta}^{-1}_{v}$ are pairwise distinct. Then $e_{\mathrm{Sph}(v)}\rmM\neq 0$. If furthermore, there is unique maximal ideal $\fracn\subset k[\rmX_{\ast}(\rmT)]^{\rmW}$ in the support of $\rmM$, then for each maximal ideal $\fracm\subset k[\rmX_{\ast}(\rmT)]$ in the support of $\rmM$, the maps
\begin{equation*}
\begin{aligned}
&k[\rmW]\otimes_{k}\rmM_{\fracm}\rightarrow \rmM,\phantom{p} w\otimes m \mapsto w\cdot m\\
&\rmM_{\fracm}\rightarrow e_{\mathrm{Sph}(v)}\rmM,\phantom{p} m\mapsto e_{\mathrm{Sph}}(v)m\\
\end{aligned}
\end{equation*}
are both isomorphisms.
\end{enumerate}
\end{lemma}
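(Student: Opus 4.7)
The plan is to exploit the Bernstein presentation of the Iwahori--Hecke algebra together with the Satake identification. Since $l\geq 3$ and the Weyl group $\rmW$ of $\GSp_{4}$ has order $8$, the idempotent $e_{\mathrm{Sph}}(v)=|\rmW|^{-1}\sum_{w\in\rmW}T_{w}$ (summed over the finite Hecke subalgebra indexed by $\rmW$) lies in $\TT^{\Iw}_{v}$, and conjugation by $e_{\mathrm{Sph}}(v)$ identifies the spherical Hecke algebra with $k[\rmX_{\ast}(\rmT)]^{\rmW}$. The key general fact to deploy is that the $k[\rmX_{\ast}(\rmT)]$-support of any $\TT^{\Iw}_{v}\otimes_{\calO}k$-module is stable under the $\rmW$-action; this follows from the Bernstein relations, since conjugation by $T_{w}$ in $\TT^{\Iw}_{v}$ moves the Bernstein characters by $w$ up to correction terms that are inverted by the regularity hypotheses.

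For part $(1)$, I would first translate the vanishing of the three listed operators on $e_{\mathrm{Sph}}(v)\rmM$ into an equality inside $k[\rmX_{\ast}(\rmT)]^{\rmW}$. Using the explicit cocharacters $\beta_{v,0},\beta_{v,1},\beta_{v,2}$ and the Satake isomorphism for $\GSp_{4}$, the operators $\rmT_{v,0},\rmT_{v,1},\rmT_{v,2}$ are sent, after the standard twist by the modular character, to specific $\rmW$-invariant polynomials in Satake parameters whose four roots $x_{0},x_{0}x_{1},x_{0}x_{2},x_{0}x_{1}x_{2}$ satisfy the similitude constraint $x_{0}^{2}x_{1}x_{2}=1$. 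The combinations $\rmT_{v,0}-1$, $\rmT_{v,2}-e_{1}(\gamma_{0},\gamma_{1},\gamma_{2})$, and $\rmT_{v,1}+2\rmT_{v,0}-e_{2}(\gamma_{0},\gamma_{1},\gamma_{2})$ are normalized precisely so that their simultaneous vanishing cuts out the maximal ideal $\fracn\subset k[\rmX_{\ast}(\rmT)]^{\rmW}$ corresponding to the Weyl orbit of $\gamma$. Hence $\fracn$ lies in the support of $e_{\mathrm{Sph}}(v)\rmM$ as a $k[\rmX_{\ast}(\rmT)]^{\rmW}$-module, and by $\rmW$-stability of the $k[\rmX_{\ast}(\rmT)]$-support of $\rmM$, every preimage $\fracm_{w}=(x_{i}-(w\gamma)_{i})$ for $w\in\rmW$ sits in the support of $\rmM$.

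For part $(2)$, I would argue in the reverse direction. The conditions $\gamma_{0}^{2}\gamma_{1}\gamma_{2}=1$ and $(\gamma_{1}-1)(\gamma_{2}-1)(\gamma_{1}-\gamma_{2})(\gamma_{1}\gamma_{2}-1)\neq 0$ guarantee that $\rmW$ acts freely on the set $\rmW\cdot\fracm$ of maximal ideals in the support, so $\rmM$ decomposes as a $k[\rmX_{\ast}(\rmT)]$-module into $\bigoplus_{\fracm'\in\rmW\cdot\fracm}\rmM_{\fracm'}$, with $\rmW$ permuting the summands simply transitively via its action inside $\TT^{\Iw}_{v}\otimes_{\calO}k$. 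The map $k[\rmW]\otimes_{k}\rmM_{\fracm}\rightarrow \rmM$, $w\otimes m\mapsto w\cdot m$, is therefore an isomorphism. To see $e_{\mathrm{Sph}}(v)\rmM\neq 0$ and that $\rmM_{\fracm}\rightarrow e_{\mathrm{Sph}}(v)\rmM$ is an isomorphism, observe that for $m\in\rmM_{\fracm}$ the element
\[
e_{\mathrm{Sph}}(v)\cdot m=|\rmW|^{-1}\sum_{w\in\rmW} w\cdot m
\]
has projection onto each $\rmM_{w\fracm}$ equal to $|\rmW|^{-1}(w\cdot m)$; this forces $e_{\mathrm{Sph}}(v)$ to be injective on $\rmM_{\fracm}$, and surjectivity onto $e_{\mathrm{Sph}}(v)\rmM$ follows from the first isomorphism, since any $\rmW$-invariant element in $\bigoplus_{\fracm'}\rmM_{\fracm'}$ is determined by its $\rmM_{\fracm}$-component.

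The main obstacle I foresee is the explicit matching between the unramified Hecke operators $\rmT_{v,0},\rmT_{v,1},\rmT_{v,2}$ and the elementary symmetric polynomials in Satake parameters, with the correct powers of $q_{v}$ and the constant shift appearing in $\rmT_{v,1}+2\rmT_{v,0}$; this reflects the non-minuscule nature of the Satake transform for $\GSp_{4}$ and needs to be tracked carefully against the Bruhat decomposition of $\GSp_{4}(\calO_{\rmF_{v}})\beta_{v,i}\GSp_{4}(\calO_{\rmF_{v}})$. Once this dictionary is set up, the rest of the argument is essentially formal.
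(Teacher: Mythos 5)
The paper's own ``proof'' of this lemma is a bare citation of \cite[Lemmas 2.4.36 and 2.4.37]{BCGP}, and your outline is a reconstruction of exactly the argument those lemmas use (Bernstein presentation, $\rmW$-stability of the $k[\rmX_{\ast}(\rmT)]$-support, identification of $e_{\mathrm{Sph}}(v)\rmM$ with a module over $k[\rmX_{\ast}(\rmT)]^{\rmW}$ via the Satake transform), so the strategy is the right one. Two places in your sketch are asserted rather than established, and the second is the real content of the lemma. First, the idempotent cutting out $\GSp_{4}(\calO_{\rmF_{v}})$-invariants inside $\TT^{\Iw}_{v}\otimes_{\calO}k$ is $\bigl(\sum_{w\in\rmW}q_{v}^{\ell(w)}\bigr)^{-1}\sum_{w}\rmT_{w}$, i.e.\ the normalizing constant is the index $[\GSp_{4}(\calO_{\rmF_{v}}):\Iw(v)]$ and not $|\rmW|$; these agree in $k$ only when $q_{v}\equiv 1\bmod l$ (true at the Taylor--Wiles primes where the lemma is applied, but not a hypothesis of the lemma as stated), and one must in any case check this index is invertible in $k$ before speaking of an idempotent. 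Second, your claim that ``$\rmW$ permutes the summands $\rmM_{\fracm'}$ simply transitively via its action inside $\TT^{\Iw}_{v}\otimes_{\calO}k$'' presupposes a $\rmW$-action on $\rmM$ that does not exist a priori: the elements $\rmT_{w}$ satisfy $\rmT_{w}\rmT_{w'}=\rmT_{ww'}$ only when lengths add, and conjugation by $\rmT_{s}$ moves a Bernstein character $\theta_{\lambda}$ to $\theta_{s\lambda}$ only up to correction terms with denominators of the form $1-\theta_{-\alpha^{\vee}}$. The construction of normalized intertwiners in the localization of $\TT^{\Iw}_{v}\otimes_{\calO}k$ at the central maximal ideal $\fracn$ --- which is precisely where the regularity hypothesis $(\gamma_{1}-1)(\gamma_{2}-1)(\gamma_{1}-\gamma_{2})(\gamma_{1}\gamma_{2}-1)\neq 0$ is needed to invert those denominators --- together with the verification that $\sum_{w}\rmT_{w}$ coincides with the averaging idempotent for the resulting action after localization, is the technical heart of both parts; you name the mechanism but do not carry it out, and it belongs on the same footing as the explicit $\GSp_{4}$ Satake computation you already flag as the ``main obstacle.'' With those two computations supplied, your argument closes up and is the proof given in the cited reference.
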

\begin{proof}
Part $(1)$ is \cite[Lemma 2.4.36]{BCGP} and part $(2)$ is \cite[Lemma 2.4.37]{BCGP}.
\end{proof}
Recall that an $\calO[\GSp_{4}(\rmF_{v})]$-module $\rmM$ is \emph{smooth} if every element of $\rmM$ is fixed by some open compact subgroup of $\GSp_{4}(\rmF_{v})$ and it is \emph{admissible} if it is smooth and for each open compact $\rmK$, $\rmM^{\rmK}$ is a finite $\calO$-modules.
The following modules are the admissible $\calO[\GSp_{4}(\rmF_{v})]$-modules we will consider.
\begin{equation*}
\begin{aligned}
&\rmM_{\emptyset}:=\Hom_{\calO}(\rmH^{d(\rmB)}(\mathrm{Sh}(\rmB, \rmK), \calL_{\xi}), \calO)_{\fracm}\\
&\rmM_{0}(\rmQ_{n}):=\rmH_{0}(\rmQ_{n})_{\fracm_{\rmQ_{n}}, \fracm^{\rmQ_{n}}_{\overline{\alpha}, \overline{\beta}}}\\ 
&\rmM_{1}(\rmQ_{n}):=\rmH_{1}(\rmQ_{n})_{\fracm_{\rmQ_{n}}, \fracm^{\rmQ_{n}}_{\overline{\alpha}, \overline{\beta}}}\\
\end{aligned}
\end{equation*}
where localizing at $\fracm^{\rmQ_{n}}_{\overline{\alpha}, \overline{\beta}}$ means localizing at each maxiamal ideal  $\fracm_{\overline{\alpha}_{v}, \overline{\beta}_{v}}$ for all $v\in\rmQ_{n}$. These modules are defined similarly as in \cite[page 410]{BCGP}.

\begin{lemma}\label{tw-module}
There is natural isomorphism 
\begin{equation*}
\rmM_{0}(\rmQ_{n})\xrightarrow{\sim}\rmM_{\emptyset}
\end{equation*}
which induces an isomorphism
\begin{equation*}
\rmM_{1}(\rmQ_{n})/\fraca_{\rmQ_{n}}\xrightarrow{\sim}\rmM_{\emptyset}.
\end{equation*}
\end{lemma}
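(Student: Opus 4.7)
The plan is to establish the two isomorphisms in sequence: the substantive first isomorphism $\rmM_0(\rmQ_n) \xrightarrow{\sim} \rmM_\emptyset$ compares Iwahori and spherical level at the Taylor--Wiles primes, after which the induced second isomorphism $\rmM_1(\rmQ_n)/\fraca_{\rmQ_n} \xrightarrow{\sim} \rmM_\emptyset$ follows formally by combining the first with Lemma \ref{H-tw}.

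For the first isomorphism, I would start from the observation that $\rmK_0(\rmQ_n)$ and $\rmK$ agree away from $\rmQ_n$, and at each $v \in \rmQ_n$ the spherical group $\GSp_4(\calO_{\rmF_v}) \subset \rmK$ is replaced by the Iwahori subgroup $\rmK_{v, 0} = \Iw(v)$. The composite spherical idempotent $\prod_{v \in \rmQ_n} e_{\mathrm{Sph}(v)}$ therefore projects $\rmH^{d(\rmB)}(\mathrm{Sh}(\rmB, \rmK_0(\rmQ_n)), \calL_\xi)$ onto $\rmH^{d(\rmB)}(\mathrm{Sh}(\rmB, \rmK), \calL_\xi)$ one prime at a time, by passing to $\GSp_4(\calO_{\rmF_v})$-invariants. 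Dualizing exhibits $\rmH_\emptyset$ as a direct summand of $\rmH_0(\rmQ_n)$ cut out by the idempotent. Now Lemma \ref{Iwahori-Hecke}(2) applies at each $v \in \rmQ_n$: the Taylor--Wiles datum guarantees that $\overline{\alpha}_v^{\pm 1}, \overline{\beta}_v^{\pm 1}$ are pairwise distinct, and the relation $\gamma_0^2 \gamma_1 \gamma_2 = 1$ of that lemma holds since $q_v \equiv 1 \pmod l$ forces the similitude $\epsilon_l^{-3}(\phi_v^{-1}) = q_v^3 \equiv 1 \pmod l$. Hence, after localizing at $\fracm_{\overline{\alpha}_v, \overline{\beta}_v}$, the operator $e_{\mathrm{Sph}(v)}$ induces an isomorphism from the localized module onto its spherical summand. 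Iterating over all $v \in \rmQ_n$, and using that the spherical maximal ideal $\fracm$ on $\TT^\Sigma$ is compatible with each $\fracm_{\overline{\alpha}_v, \overline{\beta}_v}$ (the roots of $\rmQ_v(\rmX)$ modulo $\fracm$ are precisely $\overline{\alpha}_v, \overline{\beta}_v, \overline{\beta}_v^{-1}, \overline{\alpha}_v^{-1}$), produces the desired isomorphism $\rmM_0(\rmQ_n) \xrightarrow{\sim} \rmM_\emptyset$.

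For the second isomorphism, I would invoke Lemma \ref{H-tw}, which directly supplies $\rmH_1(\rmQ_n)_{\fracm_{\rmQ_n}}/\fraca_{\rmQ_n} \cong \rmH_0(\rmQ_n)_{\fracm_{\rmQ_n}}$. The additional localization at $\fracm^{\rmQ_n}_{\overline{\alpha}, \overline{\beta}}$ commutes with the quotient by $\fraca_{\rmQ_n}$, since the former is an ideal coming from the Bernstein image of $\calO[\rmT(\rmF_v)/\rmT(\calO_{\rmF_v})_1]$ while the latter is the augmentation ideal of $\calO[\Delta_{\rmQ_n}]$ with $\Delta_{\rmQ_n} = \rmK_0(\rmQ_n)/\rmK_1(\rmQ_n)$. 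Thus $\rmM_1(\rmQ_n)/\fraca_{\rmQ_n} \cong \rmM_0(\rmQ_n)$, and composition with the first isomorphism yields the second.

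The main obstacle is the compatibility bookkeeping that underlies the application of Lemma \ref{Iwahori-Hecke}(2): one has to verify that localizing $\rmH_\emptyset$ at the spherical ideal $\fracm \subset \TT^\Sigma$ agrees, after applying $\prod_{v \in \rmQ_n} e_{\mathrm{Sph}(v)}$, with localizing $\rmH_0(\rmQ_n)$ at $\fracm_{\rmQ_n}$ together with the Iwahori-side ideals $\fracm_{\overline{\alpha}_v, \overline{\beta}_v}$. This reduces to matching the Satake parameters encoded in $\rmQ_v(\rmX)$ with the Bernstein-presentation image of $\rmT(\rmF_v)/\rmT(\calO_{\rmF_v})$ inside $\TT^{\Iw}_v$, which is standard but must be tracked carefully.
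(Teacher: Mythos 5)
Your proposal is correct and follows essentially the same route as the paper: the first isomorphism comes from Lemma~\ref{Iwahori-Hecke}(2) applied at each $v\in\rmQ_{n}$ (the paper phrases this, after reducing mod $\lambda$ via Nakayama, through the pair of degeneracy maps $[\rmK_{v}1\rmK'_{v}]$ and $[\rmK'_{v}1\rmK_{v}]$, whose composites are $e_{\mathrm{Sph}(v)}$ and the index $[\rmK_{v}:\rmK'_{v}]\neq 0$ in $k$), and the second isomorphism follows from Lemma~\ref{H-tw}. The one step you gloss over is the paper's map $\alpha_{3}$: since $\rmM_{0}(\rmQ_{n})$ is localized at $\fracm_{\rmQ_{n}}\subset\TT^{\Sigma\cup\rmQ_{n}}$ while $\rmM_{\emptyset}$ is localized at $\fracm\subset\TT^{\Sigma}$, one must still check that no other maximal ideal of $\TT^{\Sigma}$ lying over $\fracm_{\rmQ_{n}}$ occurs in the support, which the paper obtains from local--global compatibility and Chebotarev.
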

\begin{proof}
For the first isomorphism, we use Nakyama's lemma and we are reduced to prove the composite of the following maps are isomorphisms
\begin{equation*}
\begin{aligned}
\rmH^{d(\rmB)}(\mathrm{Sh}(\rmB, \rmK_{0}(\rmQ)), \calL_{\xi}\otimes k)_{\fracm_{\rmQ}, \fracm^{\rmQ_{n}}_{\overline{\alpha}, \overline{\beta}}}&\xrightarrow{\alpha_{1}} \rmH^{d(\rmB)}(\mathrm{Sh}(\rmB, \rmK_{0}(\rmQ)), \calL_{\xi}\otimes k)_{\fracm_{\rmQ}};\\
\rmH^{d(\rmB)}(\mathrm{Sh}(\rmB, \rmK_{0}(\rmQ)), \calL_{\xi}\otimes k)_{\fracm_{\rmQ}}&\xrightarrow{\alpha_{2}}\rmH^{d(\rmB)}(\mathrm{Sh}(\rmB, \rmK), \calL_{\xi}\otimes k)_{\fracm_{\rmQ}}.\\
\rmH^{d(\rmB)}(\mathrm{Sh}(\rmB, \rmK_{0}(\rmQ)), \calL_{\xi}\otimes k)_{\fracm_{\rmQ}}&\xrightarrow{\alpha_{3}}\rmH^{d(\rmB)}(\mathrm{Sh}(\rmB, \rmK_{0}(\rmQ)), \calL_{\xi}\otimes k)_{\fracm}\\
\end{aligned}
\end{equation*}
is an isomorphism. Showing the last isomorphism $\alpha_{3}$ is easy and this results from local-global compatibility and Chebotarev density theorem. 

For each $v\in\rmQ$, let $\rmK_{v}=\GSp_{4}(\calO_{\rmF_{v}})$ and $\rmK^{\prime}_{v}=\Iw(v)$, the map $\alpha_{2}$ is induced from the Hecke operators $[\rmK_{v} 1\rmK^{\prime}_{v}]$. Let $\alpha^{\prime}_{2}$ be the map 
\begin{equation*}
\rmH^{d(\rmB)}(\mathrm{Sh}(\rmB, \rmK), \calL_{\xi}\otimes k)_{\fracm_{\rmQ}}\xrightarrow{\alpha^{\prime}_{2}}\rmH^{d(\rmB)}(\mathrm{Sh}(\rmB, \rmK_{0}(\rmQ)), \calL_{\xi}\otimes k)_{\fracm_{\rmQ}}
\end{equation*}
induced by $[\rmK^{\prime}_{v}1\rmK_{v}]$. Then since $[\rmK^{\prime}_{v}1\rmK_{v}][\rmK_{v}1\rmK^{\prime}_{v}]$ is by definition $e_{\mathrm{Sph}(v)}$. We see that the composite map $\alpha^{\prime}_{2}\circ\alpha_{2}\circ\alpha_{1}$ is an isomorphism by $(2)$ of Lemma \ref{Iwahori-Hecke}  and the definition of the Taylor-Wiles datum.  On the other hand, $\alpha_{2}\circ\alpha^{\prime}_{2}=[\rmK_{v}1\rmK^{\prime}_{v}][\rmK^{\prime}_{v}1\rmK_{v}]$ is $[\rmK_{v}:\rmK^{\prime}_{v}]$ which is non-zero in $k$. It follows that $\alpha^{\prime}_{2}$ is injective and hence $\alpha_{2}\circ\alpha_{1}$ is an isomorphism. The first part is proved. The second part follows from Lemma \ref{H-tw}. 
\end{proof}

It is well-known that $\bfT_{\fracm}[1/l]$ is reduced finite ring over $E_{\lambda}$. Since $\rmM_{\emptyset}$ is finite free $\calO$-module, $\bfT_{\fracm}$ is a reduced ring finite flat over $\calO$. Every closed point $x$ of $\Spec\phantom{.}\bfT_{\fracm}[1/l]$ gives rise to a representation $\pi_{x}$ of $\GSp_{4}(\mathbb{A}_{\rmF})$ such that
\begin{itemize}
\item[(a)] the associated representation $\rho_{\pi_{x}, \iota_{l}}$ has residual representation $\overline{\rho}$;
\item[(b)] $\pi^{\Sigma_{\rmB}}_{x}$ appears in $\rmH^{d(\rmB)}(\mathrm{Sh}(\rmB, \rmK), \calL_{\xi}\otimes \CC)$;
\item[(c)] the archimedean weights of $\pi_{x}$ equals to $\xi$.
\end{itemize}
The representation $\rho_{\pi_{x}, \iota_{l}}$ induces a continuous homomorphism
\begin{equation*}
\rho_{x}: \rmG_{\rmF}\rightarrow \GSp_{4}(\bfT_{x})
\end{equation*}
where $\bfT_{x}$ is the completion of $\bfT_{\fracm}[1/l]$ at $x$. By a theorem of Carayol, the representation
\begin{equation*}
\prod\limits_{x\in\Spec\phantom{.}\bfT_{\fracm}[1/l]}\rho_{x}: \rmG_{\rmF}\rightarrow \GSp_{4}(\prod\limits_{x}\bfT_{x})
\end{equation*}
can be conjugated to a representation
\begin{equation*}
\rho_{\fracm}: \rmG_{\rmF}\rightarrow \GSp_{4}(\bfT_{\fracm})
\end{equation*}
which lifts $\overline{\rho}$. 
\begin{lemma}
The Galois representation
\begin{equation*}
\rho_{\fracm}: \rmG_{\rmF}\rightarrow \GSp_{4}(\bfT_{\fracm})
\end{equation*}
satisfies the global deformation problem $\calS$. In particular there is a morphism 
\begin{equation*}
\varphi: \rmR^{\mathrm{univ}}_{\calS}\rightarrow \bfT_{\fracm}
\end{equation*}
which is in fact surjective.
\end{lemma}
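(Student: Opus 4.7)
The plan is to check that $\rho_{\fracm}$ satisfies the two global requirements---that it lifts $\overline{\rho}$ and has $\nu\circ\rho_{\fracm}=\epsilon^{-3}_{l}$---along with the local condition $\rho_{\fracm}|_{\rmG_{\rmF_{v}}}\in\calD_{v}(\bfT_{\fracm})$ at each $v\in\rmS=\Sigma_{\min}\cup\Sigma_{\lr}\cup\Sigma_{l}$. The lift property is built into the application of Carayol's theorem that produced $\rho_{\fracm}$. For the similitude identity, each $\pi_{x}$ has trivial central character so $\nu\circ\rho_{x}=\epsilon^{-3}_{l}$ at every closed point $x\in\Spec\bfT_{\fracm}[1/l]$; because $\bfT_{\fracm}$ is reduced and finite flat over $\calO$, the canonical injection $\bfT_{\fracm}\hookrightarrow\prod_{x}\bfT_{x}$ upgrades this pointwise identity to an identity over $\bfT_{\fracm}$. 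Once the local conditions are verified, the universal property of $\rmR^{\univ}_{\calS}$ produces $\varphi$, and surjectivity follows by a standard Chebotarev argument: for each $v\notin\Sigma$ the coefficients of $\det(\rmX-\rho_{\fracm}(\phi^{-1}_{v}))$ lie in the image of $\varphi$ and reduce modulo $\fracm$ to the polynomial $\rmQ_{v}(\rmX)$, whose coefficients are the unramified Hecke operators $\rmT_{v,0},\rmT_{v,1},\rmT_{v,2}$ that generate $\bfT_{\fracm}$ over $\calO$.

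The bulk of the work is the local verification at each $v\in\rmS$. My strategy is to check $\rho_{x}|_{\rmG_{\rmF_{v}}}\in\calD_{v}(\bfT_{x})$ at every closed point using local-global compatibility for the $\GSp_{4}$ Langlands correspondence, and then to transfer the conclusion to $\bfT_{\fracm}$ using the representability of $\calD_{v}$ by an $\calO$-flat quotient of $\rmR^{\square}_{v}$ together with reducedness of $\bfT_{\fracm}$. For $v\in\Sigma_{\min}$, condition (D3) stipulates that every lift of $\overline{\rho}_{v}$ is already minimally ramified in the sense of Definition \ref{min-ram}, so the condition is automatic. For $v\in\Sigma_{\lr}$, rigidity supplies the eigenvalue hypothesis of Lemma \ref{decomp}, which yields a canonical $\rho_{\fracm}(\phi_{v})$-stable splitting $\bfT^{\,4}_{\fracm}=\rmM_{0}\oplus\rmM_{1}$ with the generalized eigenvalues $\mu q^{-1}_{v},\mu q^{-2}_{v}$ collected on $\rmM_{0}$. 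At each closed point $x$, the type $\rmI\rmI\rma$ structure of $\pi_{x,v}$ combined with the paramodular choice of $\rmK_{v}$ forces $\rmM_{0}\otimes\bfT_{x}$ to be inertia-stable and $\rmM_{1}\otimes\bfT_{x}$ to be unramified, so $\rho_{x}|_{\rmG_{\rmF_{v}}}\in\calD^{\mathrm{lr}}_{v}(\bfT_{x})$.

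For $v\in\Sigma_{l}$, the level at $v$ is hyperspecial so $\pi_{x,v}$ is spherical, and by (c) the archimedean weight equals $\xi$ subject to (D1); hence $\rho_{x}|_{\rmG_{\rmF_{v}}}$ is crystalline with regular Hodge--Tate weights in the Fontaine--Laffaille range $[0,l-2]$ and with symplectic similitude character $\epsilon^{-3}_{l}$. This places it in $\calD^{\mathrm{FL}}_{v}(\bfT_{x})$ by Definition \ref{FL}, and the formal smoothness established in Proposition \ref{FL-dim} combined with $\calO$-flatness of $\bfT_{\fracm}$ patches these pointwise Fontaine--Laffaille structures into a Fontaine--Laffaille lift over $\bfT_{\fracm}$. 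The main obstacle I anticipate is the integral propagation at $\Sigma_{\lr}$: by Proposition \ref{mix-dim} the local lifting ring $\rmR^{\mathrm{mix}}_{v}$ has two irreducible components, $\rmR^{\mathrm{un}}_{v}$ and $\rmR^{\mathrm{ram}}_{v}$, and to show that $\rho_{\fracm}|_{\rmG_{\rmF_{v}}}$ factors through $\Spf\rmR^{\mathrm{mix}}_{v}$ rather than merely through $\Spf\rmR^{\square}_{v}$ one must track, at every closed point, which component is hit and ensure coherence of these choices across the family---a closed condition that descends to $\bfT_{\fracm}$ by Zariski density of closed points in $\Spec\bfT_{\fracm}[1/l]$ once it is controlled via the type $\rmI\rmI\rma$ analysis and the decomposition of Lemma \ref{decomp}.
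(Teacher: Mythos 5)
Your overall architecture is the same as the paper's: verify the local conditions at each closed point $x$ of $\Spec\,\bfT_{\fracm}[1/l]$ via local--global compatibility, descend to $\bfT_{\fracm}$ using reducedness and the representability of each $\calD_{v}$ by a flat quotient of $\rmR^{\square}_{v}$, and get surjectivity from the fact that the coefficients of $\det(\rmX-\rho_{\fracm}(\phi_{v}^{-1}))$ for $v\notin\Sigma$ are the unramified Hecke operators generating $\bfT_{\fracm}$ (the paper just cites \cite[Proposition 3.4.4 (2,3)]{CHT} for this). Your treatments of $\Sigma_{\min}$ (vacuous by rigidity, and in fact $\calD_{v}$ there classifies all liftings), of $\Sigma_{l}$, and of the similitude character are all consistent with the paper.

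The one genuine gap is at $v\in\Sigma_{\lr}$: you take "the type $\rmI\rmI\rma$ structure of $\pi_{x,v}$" as an input, but for an arbitrary closed point $x$ this is exactly what has to be established --- the hypothesis that $\pi_{v}$ is of type $\rmI\rmI\rma$ is only given for the original $\pi$, not for every $\pi_{x}$ contributing to $\bfT_{\fracm}$. The paper closes this by first noting that the assumptions of Theorem \ref{main} force $\pi_{x}$ to be tempered, then invoking the local Jacquet--Langlands correspondence \cite{CG} (for $v\in\Sigma_{\rmB}$, $\pi_{x,v}$ is a representation of $\GU_{2}(\rmB_{v})$ with a paramodular fixed vector, and its transfer to $\GSp_{4}(\rmF_{v})$ is of type $\rmI\rmI\rma$), and finally the local Langlands correspondence of Gan--Takeda \cite{GT11} together with Theorem \ref{Galois}(2) to read off that $\rho_{x}|_{\rmG_{\rmF_{v}}}$ lands in $\calD^{\mathrm{ram}}_{v}\subset\calD^{\mathrm{mix}}_{v}$. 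Without the temperedness-plus-Jacquet--Langlands step your argument at $\Sigma_{\lr}$ is circular. On the other hand, your worry about "coherence of component choices across the family" is not an obstacle here: since the target condition $\calD^{\mathrm{lr}}_{v}$ contains both components of $\rmR^{\mathrm{mix}}_{v}$, it suffices that each closed point factors through $\rmR^{\mathrm{mix}}_{v}$, and this descends to the reduced, $\calO$-flat ring $\bfT_{\fracm}$ exactly as you say.
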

\begin{proof}
Let $x\in \Spec\phantom{.}\bfT_{\fracm}[1/l]$. By $(\rmb)$ in the above discussion, we have $\pi_{x, v}$ is unramified for $v\not\in \rmS$ and hence $\rho_{\fracm}\vert_{\rmG_{\rmF_{v}}}$ is unramified. By $(\rmc)$, and $v\in \Sigma_{l}$, we have $\rho_{\fracm}\vert_{\rmG_{\rmF_{v}}}$ is crystalline of Fontiane-Laffaille weights of the desired range. By the assumptions in Theorem \ref{main}, we know $\pi_{x}$ is necessarily tempered and hence by local Jacquet-Langlands correspondence \cite{CG}, $\pi_{x ,v}$ is of type $\mathrm{II}_{\rma}$ for $v\in\Sigma_{\mathrm{lr}}$. Hence $\rho_{\fracm}\vert_{\rmG_{\rmF_{v}}}$ belongs to $\calD^{\mathrm{ram}}_{v}$ by local Langlands correspondence \cite{GT11}. Finally $\varphi$ clearly satisfies \cite[Proposition 3.4.4 (2,3)]{CHT} which implies $\varphi$ is surjective. 
\end{proof}

Let $(\rmQ_{n}, (\overline{\alpha}_{v}, \overline{\beta}_{v}, \overline{\gamma_{v}}, \overline{\delta_{v}}))$ be a Taylor-Wiles datum as in Theorem \ref{tw-primes}. We define $\bfT_{i, \fracm_{\rmQ_{n}}}$ by the image of $\TT^{\Sigma\cup\rmQ_{n}}$ in $\End_{\calO}(\rmM_{i}(\rmQ_{n}))$. By the same construction as above, we have a Galois representation
\begin{equation*}
\rho_{1, \fracm_{\rmQ_{n}}}: \rmG_{\rmF}\rightarrow \GSp_{4}(\bfT_{1, \fracm_{\rmQ_{n}}})
\end{equation*}
which satisfies the augmented deformation problem $\calS_{\rmQ_{n}}$. Therefore we have a morphism
\begin{equation*}
\varphi_{\rmQ_{n}}: \rmR^{\mathrm{univ}}_{\calS_{\rmQ_{n}}}\rightarrow \bfT_{1, \fracm_{\rmQ_{n}}}
\end{equation*}
which is surjective by the same reasoning as in the previous lemma.

Let $v$ be an element in $\rmQ_{n}$. Then $\rho_{1, \fracm_{\rmQ_{n}}}\vert_{\rmG_{\rmF_{v}}} $has the form in Lemma \ref{tw-lift}. Therefore there are characters 
$\chi_{\overline{\alpha}_{v}}\circ\mathrm{Art}_{\rmF_{v}}\vert_{\calO^{\times}_{\rmF_{v}}}:\calO^{\times}_{\rmF_{v}}\rightarrow \bfT^{\times}_{1, \fracm_{\rmQ_{n}}}$ and $\chi_{\overline{\beta}_{v}}\circ\mathrm{Art}_{\rmF_{v}}\vert_{\calO^{\times}_{\rmF_{v}}} :\calO^{\times}_{\rmF_{v}}\rightarrow \bfT^{\times}_{1, \fracm_{\rmQ_{n}}}$ which factor through $k^{\times}_{v}(l)$. This gives a morphism $\calO[\Delta_{v}]\rightarrow \bfT_{1, \fracm_{\rmQ_{n}}}$ and hence a morphism $\calO[\Delta_{\rmQ_{n}}]\rightarrow \bfT_{1, \fracm_{\rmQ_{n}}}$ which is compatible with the morphism $\calO[\Delta_{\rmQ_{n}}]\rightarrow \rmR^{\mathrm{univ}}_{\calS_{\rmQ_{n}}}$ constructed in the same way.
\begin{lemma}
There is a commutative diagram
\begin{equation*}
\begin{tikzcd}
 \rmR^{\mathrm{univ}}_{\calS_{\rmQ_{n}}}/\fraca_{\rmQ_{n}} \arrow[r, "\sim"]  \arrow[d, "\varphi_{\rmQ_{n}}/\fraca_{\rmQ_{n}}"]  &\rmR^{\mathrm{univ}}_{\calS} \arrow[d, "\varphi"] \\
 \bfT_{1, \fracm_{\rmQ_{n}}}/\fraca_{\rmQ_{n}}\arrow[r, "\sim"]        &    \bfT_{\fracm} 
\end{tikzcd}
\end{equation*}
\end{lemma}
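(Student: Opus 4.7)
The plan is to construct the two horizontal isomorphisms independently and then read off commutativity from the compatibility of the constructions on both sides.

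For the top arrow, I would show that $\rmR^{\mathrm{univ}}_{\calS_{\rmQ_{n}}}/\fraca_{\rmQ_{n}}$ pro-represents exactly the deformation functor $\calD_{\calS}$. Fix $v\in\rmQ_{n}$; by Lemma \ref{tw-lift}, every lift $\rho_{v}$ of $\overline{\rho}_{v}$ is conjugate to $\chi_{\overline{\alpha}_{v}}\oplus\chi_{\overline{\beta}_{v}}\oplus\epsilon^{-3}_{l}\chi^{-1}_{\overline{\beta}_{v}}\oplus\epsilon^{-3}_{l}\chi^{-1}_{\overline{\alpha}_{v}}$, and the characters $\chi_{\overline{\alpha}_{v}}\circ\mathrm{Art}_{\rmF_{v}}\vert_{\calO^{\times}_{\rmF_{v}}}$, $\chi_{\overline{\beta}_{v}}\circ\mathrm{Art}_{\rmF_{v}}\vert_{\calO^{\times}_{\rmF_{v}}}$ factor through $k^{\times}_{v}(l)$, yielding the structure map $\calO[\Delta_{v}]\rightarrow \rmR^{\square}_{v}$. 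Killing the augmentation ideal $\fraca_{v}$ forces these characters to be trivial on $\calO^{\times}_{\rmF_{v}}$, hence $\rho_{v}$ becomes unramified. Thus modding out by $\fraca_{\rmQ_{n}}$ in the universal deformation ring $\rmR^{\mathrm{univ}}_{\calS_{\rmQ_{n}}}$ recovers exactly the functor of deformations of $\overline{\rho}$ of type $\calS$ (recall that for $v\in\rmQ_{n}$, the conditions of $\calS$ are simply that $\rho_{v}$ is unramified, which is subsumed by $\calS_{\rmQ_{n}}/\fraca_{\rmQ_{n}}$), giving the top isomorphism by universality.

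For the bottom arrow, I would invoke Lemma \ref{tw-module}, which already supplies an isomorphism $\rmM_{1}(\rmQ_{n})/\fraca_{\rmQ_{n}}\xrightarrow{\sim}\rmM_{\emptyset}$ of $\calO$-modules. The issue is to identify the Hecke algebras: $\bfT_{1,\fracm_{\rmQ_{n}}}$ is the image of $\TT^{\Sigma\cup\rmQ_{n}}$ in $\End_{\calO}(\rmM_{1}(\rmQ_{n}))$, and I claim the natural surjection $\bfT_{1,\fracm_{\rmQ_{n}}}\twoheadrightarrow\End_{\calO}(\rmM_{1}(\rmQ_{n})/\fraca_{\rmQ_{n}})$ factors through $\bfT_{1,\fracm_{\rmQ_{n}}}/\fraca_{\rmQ_{n}}$ and has image $\bfT_{\fracm}$. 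The operators from $\TT^{\Sigma}$ are Hecke-equivariantly compatible via $\alpha_{1}, \alpha_{2}, \alpha_{3}$ in the proof of Lemma \ref{tw-module}, so they act on $\rmM_{\emptyset}$ through $\bfT_{\fracm}$. For $v\in\rmQ_{n}$, the spherical Hecke operators $\rmT_{v,i}$ act on $e_{\mathrm{Sph}(v)}\rmM_{1}(\rmQ_{n})_{\fracm^{\rmQ_{n}}_{\overline{\alpha},\overline{\beta}}}$ through the $\overline{\alpha}_{v},\overline{\beta}_{v}$-eigenvalues by Lemma \ref{Iwahori-Hecke}(2); these are exactly the values of the Satake parameters attached to the Frobenius eigenvalues of the unramified lift $\rho_{\fracm}\vert_{\rmG_{\rmF_{v}}}$, so the image of $\TT^{\rmQ_{n}}$ in $\bfT_{1,\fracm_{\rmQ_{n}}}/\fraca_{\rmQ_{n}}$ is already contained in the image of $\TT^{\Sigma}$, giving the bottom isomorphism.

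Commutativity is then essentially forced: both composites send the universal deformation to the Galois representation attached to the relevant Hecke module via the Carayol construction. Concretely, $\rho_{1,\fracm_{\rmQ_{n}}}\otimes_{\bfT_{1,\fracm_{\rmQ_{n}}}}\bfT_{1,\fracm_{\rmQ_{n}}}/\fraca_{\rmQ_{n}}$ is a lift of $\overline{\rho}$ valued in $\bfT_{\fracm}$ of type $\calS$ (its restriction at each $v\in\rmQ_{n}$ is unramified by the analysis above), so by uniqueness of $\rho_{\fracm}$ up to strict equivalence this quotient representation must coincide with $\rho_{\fracm}$; the square then commutes by the universal property of $\rmR^{\univ}_{\calS_{\rmQ_{n}}}$.

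I expect the main obstacle to be the verification that the augmentation ideal $\fraca_{\rmQ_{n}}$ on the Hecke side coincides with the augmentation ideal on the Galois side under $\varphi_{\rmQ_{n}}$; this requires checking that the two different $\calO[\Delta_{\rmQ_{n}}]$-algebra structures on $\bfT_{1,\fracm_{\rmQ_{n}}}$—one coming from the diamond operators in $\rmK_{0}(\rmQ)/\rmK_{1}(\rmQ)\cong \Delta_{\rmQ}$, the other from the characters $\chi_{\overline{\alpha}_{v}},\chi_{\overline{\beta}_{v}}$ via $\varphi_{\rmQ_{n}}$—agree. This is a local--global compatibility statement that in turn reduces to matching Satake parameters of $\pi_{x,v}$ at Taylor-Wiles primes with Frobenius eigenvalues of $\rho_{\pi_{x},\iota_{l}}$, which is provided by the properties (a)--(c) of the closed points $x\in\Spec\phantom{.}\bfT_{\fracm}[1/l]$ together with the known local-global compatibility at unramified places.
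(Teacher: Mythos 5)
Your proposal is correct and follows the same route as the paper: the top isomorphism via Lemma \ref{tw-lift} (killing $\fraca_{\rmQ_{n}}$ forces unramifiedness at the Taylor--Wiles primes), the bottom via Lemma \ref{tw-module} (the paper factors it as $\bfT_{1,\fracm_{\rmQ_{n}}}/\fraca_{\rmQ_{n}}\xrightarrow{\sim}\bfT_{0,\fracm_{\rmQ_{n}}}\xrightarrow{\sim}\bfT_{\fracm}$), and commutativity from the universal property. Your write-up supplies considerably more detail than the paper's terse proof, including the correct observation that surjectivity of the bottom map onto $\bfT_{\fracm}$ and the matching of the two $\calO[\Delta_{\rmQ_{n}}]$-structures rest on local--global compatibility at the auxiliary primes.
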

\begin{proof}
This bottom isomorphism is the composite of the maps
\begin{equation*}
\bfT_{1, \fracm_{\rmQ_{n}}}/\fraca_{\rmQ_{n}}\xrightarrow{\sim} \bfT_{0, \fracm_{\rmQ_{n}}}\xrightarrow{\sim}\bfT_{\fracm}
\end{equation*}
which results from Lemma \ref{tw-module}. The upper isomorphism follows from Lemma \ref{tw-lift} and the above discussions. The commutativity of the diagram is clear.
\end{proof}

Now we collect some results in the previous discussion and get ready towards the patching construction.
\begin{itemize}
\item We have isomorphisms
\begin{equation*}
\rmR^{\univ}_{\calS}\widehat{\otimes}\calT\xrightarrow{\sim} \rmR^{\rmS}_{\calS},\phantom{aa} \rmR^{\univ}_{\calS_{\rmQ_{n}}}\widehat{\otimes}\calT\xrightarrow{\sim} \rmR^{\rmS}_{\calS_{\rmQ_{n}}}.
\end{equation*}
Thus $\rmR^{\univ}_{\calS}$ is an algebra over $\rmR^{\rmS, \mathrm{loc}}_{\calS}$.
\item We define rings
\begin{equation*}
S_{\infty}:=\calO[\calT][[\rmY_{1}, \cdots, \rmY_{2q}]],\phantom{aa}\rmR_{\infty}=\rmR^{\rmS, \mathrm{loc}}_{\calS}[[\rmZ_{1},\cdots, \rmZ_{g}]]
\end{equation*}
and $\fraca_{\infty}\subset S_{\infty}$ be the augmentation ideal where the numbers $g$ and $q$ are given in Lemma \ref{tw-primes}. 
\item We have the commutative diagram 
\begin{equation*}
\begin{tikzcd}
 \rmR^{\mathrm{univ}}_{\calS_{\rmQ_{n}}}/\fraca_{\rmQ_{n}} \arrow[r, "\sim"]  \arrow[d, "\varphi_{\rmQ_{n}}/\fraca_{\rmQ_{n}}"]  &\rmR^{\mathrm{univ}}_{\calS} \arrow[d, "\varphi"] \\
 \bfT_{1, \fracm_{\rmQ_{n}}}/\fraca_{\rmQ_{n}}\arrow[r, "\sim"]        &    \bfT_{\fracm} 
\end{tikzcd}
\end{equation*}
We have a $\bfT_{1, \fracm_{\rmQ_{n}}}$-module $\rmM_{1}(\rmQ_{n})$ and a $\bfT_{\fracm}$-module $\rmM_{\emptyset}$ which satisfy
\begin{equation*}
\rmM_{1}(\rmQ_{n})/\fraca_{\rmQ_{n}}\xrightarrow{\sim}\rmM_{\emptyset}.
\end{equation*}
\end{itemize}
Resulting from the patching lemma given by \cite[Lemma 6.10]{Tho12}, we have the following.
\begin{itemize}
\item There is a homomorphism $S_{\infty}\rightarrow \rmR_{\infty}$ of rings over $\calO$ such that we have an isomorphism 
\begin{equation*}
\rmR_{\infty}/\fraca_{\infty}\cong \rmR^{\mathrm{univ}}_{\calS}
\end{equation*}
over $\rmR^{\rmS, \mathrm{loc}}_{\calS}$. 
\item There is an $\rmR_{\infty}$-module $\rmM_{\infty}$ and an isomorphism $\rmM_{\infty}/\fraca_{\infty}=\rmM_{\emptyset}$ of $\rmR^{\mathrm{univ}}_{\calS}$-modules. Moreover $\rmM_{\infty}$ is a finite free $S_{\infty}$-module.
\end{itemize}
Finally, we are ready to prove our first main result.

\begin{theorem}
Under the above setup, suppose the following assumptions hold.
\begin{itemize}
\item[(D0)] $l\geq 5$ and is unramified in $\rmF$; 
\item[(D1)] The weight $(k_{v}, l_{v})_{v\mid\infty}$ of $\pi$ satisfies $l_{v}+k_{v}\leq l+1$;
\item[(D2)] The image $\overline{\rho}(\rmG_{\rmF})$ contains $\GSp_{4}(\FF_{l})$;
\item[(D3)] $\overline{\rho}$ is rigid for $(\Sigma_{\mathrm{min}}, \Sigma_{\mathrm{lr}})$;
\item[(D4)] Suppose that $\rmB$ is indefinite. For every finite set $\Sigma^{\prime}$ of nonarchimedean places of $\rmF$ containing $\Sigma$, and every open compact subgroup $\rmK^{\prime}\subset \rmK$ satisfying $\rmK^{\prime}_{v}=\rmK_{v}$ for $v\not\in \Sigma^{\prime}$, we have 
\begin{equation*}
\rmH^{d}(\mathrm{Sh}(\rmB, \rmK^{\prime}), \calL_{\xi}\otimes_{\calO}k)_{\fracm^{\prime}}=0
\end{equation*}
for $d\neq 3$ and $\fracm^{\prime}=\fracm\cap \TT^{\Sigma^{\prime}}$.
\end{itemize}
Let $\mathbf{T}$ be the image of $\TT^{\Sigma}$ in $\mathrm{End}_{\calO}(\rmH^{d(\rmB)}(\mathrm{Sh}(\rmB, \rmK), \calL_{\xi}))$ and suppose that $\mathbf{T}_{\fracm}$ is non-zero. Then the following results hold.
\begin{enumerate}
\item There is an isomorphism of complete intersection rings: $\rmR^{\univ}_{\calS}\cong\mathbf{T}_{\fracm}$.
\item The cohomology group $\rmH^{d(\rmB)}(\mathrm{Sh}(\rmB, \rmK), \calL_{\xi})_{\fracm}$ is finite free module over $\mathbf{T}_{\fracm}$ and hence $\rmH^{d(\rmB)}(\mathrm{Sh}(\rmB, \rmK), \calL_{\xi})_{\fracm}$ is also finite free over $\rmR^{\univ}_{\calS}$.
\end{enumerate}
\end{theorem}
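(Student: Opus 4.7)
The plan is to run the standard Taylor--Wiles--Kisin patching argument using the machinery already assembled: there is a ring $\rmR_\infty$, a power series ring $S_\infty$, and an $\rmR_\infty$-module $\rmM_\infty$ finite free over $S_\infty$, together with identifications $\rmR_\infty/\fraca_\infty \rmR_\infty\cong \rmR^{\univ}_\calS$ and $\rmM_\infty/\fraca_\infty \rmM_\infty\cong \rmM_\emptyset$. The entire theorem reduces to a dimension count matched with an Auslander--Buchsbaum argument.

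First comes the dimension count. By Propositions \ref{min-dim}, \ref{mix-dim}, and \ref{FL-dim}, every local deformation ring $\rmR_v$ appearing in $\calS$ is formally smooth over $\calO$: of relative dimension $10$ for $v\in \Sigma_{\min}\cup \Sigma_{\lr}$ (selecting the $\calD^{\mathrm{ram}}_v$-component at level-raising places) and of relative dimension $10+4[\rmF_v:\QQ_l]$ for $v\in \Sigma_l$. Therefore $\rmR^{\rmS,\mathrm{loc}}_\calS$ is a power series ring over $\calO$ of Krull dimension $10|\rmS|+4[\rmF:\QQ]+1$, and after adjoining the $g = 2q-4[\rmF:\QQ]+|\rmS|-1$ variables supplied by Lemma \ref{tw-primes} we conclude that $\rmR_\infty$ is a power series ring over $\calO$ of dimension $11|\rmS|+2q$, matching $\dim S_\infty = 11|\rmS|+2q$. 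In particular $\rmR_\infty$ is regular.

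Next comes the Auslander--Buchsbaum step. The images in $\fracm_{\rmR_\infty}$ of a regular system of parameters of $S_\infty$ form an $\rmM_\infty$-regular sequence of length $\dim \rmR_\infty$, thanks to the finite freeness of $\rmM_\infty$ over $S_\infty$; so $\mathrm{depth}_{\rmR_\infty}(\rmM_\infty)=\dim \rmR_\infty$, and Auslander--Buchsbaum over the regular ring $\rmR_\infty$ yields that $\rmM_\infty$ is finite free over $\rmR_\infty$. Faithfulness of the resulting action then embeds $\rmR_\infty$ into $\End_{S_\infty}(\rmM_\infty)$, so $\rmR_\infty$ is finite and, by Auslander--Buchsbaum applied once more, free over $S_\infty$. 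Tensoring over $S_\infty$ with $\calO = S_\infty/\fraca_\infty$ produces $\rmR^{\univ}_\calS$ as a Krull-dimension-one complete intersection $\calO$-algebra, and $\rmM_\emptyset$ as a finite free $\rmR^{\univ}_\calS$-module.

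For conclusion (1), the surjection $\varphi:\rmR^{\univ}_\calS\to \bfT_\fracm$ factors the faithful action of $\rmR^{\univ}_\calS$ on the nonzero free module $\rmM_\emptyset$, so $\ker\varphi$ annihilates $\rmM_\emptyset$ and must vanish; hence $\varphi$ is an isomorphism of complete intersection rings. For (2), $\rmM_\emptyset$ is finite free over $\bfT_\fracm$, and since the complete intersection $\bfT_\fracm$ is Gorenstein, the $\calO$-linear dual $\rmH^{d(\rmB)}(\mathrm{Sh}(\rmB,\rmK),\calL_\xi)_\fracm = \Hom_\calO(\rmM_\emptyset,\calO)$ is again finite free over $\bfT_\fracm$. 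The principal technical hurdle is the dimension identity $\dim \rmR_\infty = \dim S_\infty$: it presupposes that every local deformation ring is formally smooth, which is exactly what the rigidity hypothesis (D3) buys in conjunction with the local computations of Section 3, while the Taylor--Wiles prime count from Lemma \ref{tw-primes} is engineered precisely to balance the books.
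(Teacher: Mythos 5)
Your proposal is correct and follows essentially the same route as the paper: patching to produce $\rmM_{\infty}$ over $\rmR_{\infty}$, the dimension count $\dim \rmR_{\infty}=g+1+10\vert\rmS\vert+4[\rmF:\QQ]=2q+11\vert\rmS\vert=\dim S_{\infty}$, the depth inequality from finite freeness over $S_{\infty}$, and Auslander--Buchsbaum over the regular ring $\rmR_{\infty}$ to get freeness, whence $\varphi$ is injective. Your additional remarks --- selecting the smooth component $\calD^{\mathrm{ram}}_{v}$ at $\Sigma_{\lr}$ so that $\rmR_{\infty}$ is genuinely regular, deducing the complete intersection property from freeness of $\rmR_{\infty}$ over $S_{\infty}$, and using Gorenstein duality to pass from the dual module $\rmM_{\emptyset}$ back to the cohomology itself --- are details the paper leaves implicit, and they are filled in correctly.
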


\begin{proof}
From the above discussions, we have 
\begin{equation*}
\mathrm{depth}_{\rmR_{\infty}}\phantom{.}\rmM_{\infty}\geq \dim S_{\infty}= 2q+11\vert\rmS\vert.
\end{equation*}
By Proposition \ref{min-dim}, we have $\dim_{\calO} \rmR^{\min}_{v}=10$ for each $v\in\Sigma_{\mathrm{min}}$; By Proposition \ref{mix-dim}, $\dim_{\calO} \rmR^{\mathrm{mix}}_{v}=10$ for each $v\in\Sigma_{\mathrm{mix}}$; By Proposition \ref{FL-dim}, we have $\dim_{\calO} \rmR^{\mathrm{FL}}_{v}=10+4[\rmF_{v}: \QQ_{l}]$ for each $v\in\Sigma_{l}$. Therefore we have
\begin{equation*}
\dim \rmR^{\mathrm{S}, \mathrm{loc}}_{\calS}=10\vert \rmS\vert + 4[\rmF: \QQ]
\end{equation*}
and hence $\rmR_{\infty}$ is a regular local ring of dimension
\begin{equation*}
\begin{aligned}
\dim \rmR_{\infty} &= g+1+10\vert \rmS\vert + 4[\rmF: \QQ]\\
&= 2q-4[\rmF: \QQ]+\vert \rmS\vert+10\vert\rmS\vert+4[\rmF: \QQ]\\
&=2q+11\vert\rmS\vert.
\end{aligned}
\end{equation*}
Note we now have 
\begin{equation*}
\dim S_{\infty}=2q+11\vert\rmS\vert\leq\mathrm{depth}_{\rmR_{\infty}}\phantom{.}\rmM_{\infty}\leq \dim \rmR_{\infty}=2q+11\vert\rmS\vert.
\end{equation*}
Thus by the Auslander-Buchsbaum theorem $\rmM_{\infty}$ is finite free over $\rmR_{\infty}$-module. Hence $\rmM_{\emptyset}$ is a finite free $\rmR^{\mathrm{univ}}_{\calS}$-module. It also follows that the morphism $\varphi: \rmR^{\univ}_{\calS}\rightarrow \bfT_{\fracm}$ is both surjective and injective. We are done.
\end{proof}

\section{Rigidity of automorphic Galois representations}

Let $\pi$ be a cuspidal automorphic representation of $\GSp_{4}(\mathbb{A}_{\rmF})$ of general type with trivial central character. We recall a result of Mok and Sorensen on the existence of Galois representations attached to $\pi$. We assume that $\pi$ has weight $(k_{v}, l_{v})_{v\mid \infty}$ and trivial central character.
\begin{theorem}\label{Galois}
Suppose $\pi$ is a cuspidal automorphic representation of $\GSp_{4}(\mathbb{A}_{\rmF})$ of weight $(k_{v}, l_{v})_{v\mid\infty}$ where $k_{v} \geq l_{2}\geq 3$ and $k_{v}\equiv l_{v}\mod 2$ for all $v\mid \infty$. Suppose that $\pi$ has trivial central character. Then there is a continuous semisimple representation $\rho_{\pi, \iota_{l}}: \rmG_{\rmF}\rightarrow \GSp_{4}(\overline{\QQ}_{l})$ satisfying the following.
\begin{enumerate}
\item $\nu\circ\rho_{\pi, \iota_{l}}=\epsilon^{-3}_{l}$;
\item For each finite place $v$, we have
\begin{equation*}
\mathrm{WD}(\rho_{\pi,\iota_{l}}\vert_{\rmG_{\rmF_{v}}})^{\mathrm{F}-\mathrm{ss}}\cong \mathrm{rec}_{\mathrm{GT}}(\pi_{v}\otimes\vert\nu\vert^{-3/2})^{\mathrm{ss}};
\end{equation*}
\item If $v\mid l$, then $\rho_{\pi,\iota_{l}}\vert_{\rmG_{\rmF_{v}}}$ is de Rham with Hodge-Tate weights 
\begin{equation*}
(2-\frac{k_{v}+l_{v}}{2}, -\frac{k_{v}-l_{v}}{2}, 1+\frac{k_{v}-l_{v}}{2}, -1+\frac{k_{v}+l_{v}}{2}).
\end{equation*}
Moreover, if $\pi$ is unramfied at $v$, then $\rho_{\pi,\iota_{l}}\vert_{\rmG_{\rmF_{v}}}$ is crystalline.
\item If $\rho_{\pi, \iota_{l}}$ is irreducible, then for all finite place $v$, $\rho_{\pi, \iota_{l}}\vert_{G_{\rmF_{v}}}$ is pure.
\end{enumerate}
\end{theorem}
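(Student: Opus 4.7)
The plan is to deduce this from the transfer of $\pi$ to $\GL_4$ together with the known construction of Galois representations attached to regular algebraic essentially self-dual cuspidal (RAESDC) automorphic representations of $\GL_4$, following the strategy of Mok and S{\o}rensen.

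First, since $\pi$ is of general type with trivial central character, the definition of general type recalled in the introduction provides a transfer $\Pi$: a cuspidal automorphic representation of $\GL_4(\mathbb{A}_\rmF)$ of symplectic type with similitude character $\psi$, compatible with $\pi$ at every place via $\mathrm{rec}_{\GL_4}(\Pi_v) = \mathrm{rec}_{\mathrm{GT}}(\pi_v) \circ (\GSp_4 \hookrightarrow \GL_4)$. From the weight hypothesis $k_v \geq l_v \geq 3$ and $k_v \equiv l_v \pmod 2$, the archimedean components of $\Pi$ are cohomological, so $\Pi$ is regular algebraic; symplectic type implies $\Pi \cong \Pi^{\vee} \otimes \psi^{-1}$, so $\Pi$ is essentially self-dual. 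Thus $\Pi$ is RAESDC, and the existence of a continuous semisimple $R_{l}(\Pi)\colon \rmG_{\rmF}\to\GL_4(\overline{\QQ}_l)$ satisfying local-global compatibility up to Frobenius semisimplification at every finite place follows from the work of Chenevier--Harris, Shin, and Caraiani (with the appropriate patching arguments from \cite{BLGGT} for the totally real case).

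Next, I would show $R_l(\Pi)$ takes values in a conjugate of $\GSp_4$. The symplectic type hypothesis $L^{\rmS}(s, \Pi, \wedge^2 \otimes \psi^{-1})$ has a pole at $s=1$, combined with the congruence $\Pi \cong \Pi^{\vee} \otimes \psi^{-1}$, produces a non-degenerate $\GL_4$-equivariant alternating pairing on $R_l(\Pi)$ with values in the Galois character attached to $\psi$. Using the computation $\nu \circ \mathrm{rec}_{\mathrm{GT}}(\pi_v) = $ central character of $\pi_v$ times the appropriate Tate twist, and the hypothesis of trivial central character, the similitude character works out to $\epsilon_l^{-3}$, giving (1). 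After a conjugation we obtain $\rho_{\pi, \iota_l}\colon \rmG_{\rmF} \to \GSp_4(\overline{\QQ}_l)$, and (2) is then tautological from the construction of $R_l(\Pi)$.

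For (3), at places $v \mid l$ where $\Pi_v$ is unramified, crystallinity of $R_l(\Pi)\vert_{\rmG_{\rmF_v}}$ is part of the attached Galois representation package; in general de Rham-ness and the Hodge--Tate weight computation follow from the local--global compatibility at $l$ proved by Caraiani and the explicit relation between archimedean weights $(k_v, l_v)$ and the Harish-Chandra parameter $(k_v - 1, l_v - 2, 0)$ of $\pi_v$, which translates into the Hodge--Tate cocharacter $(2 - \tfrac{k_v+l_v}{2}, -\tfrac{k_v - l_v}{2}, 1 + \tfrac{k_v - l_v}{2}, -1 + \tfrac{k_v+l_v}{2})$ after the normalization chosen for $\psi = \epsilon_l^{-3}$. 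Finally (4) follows from the purity half of local--global compatibility established by Caraiani in the irreducible case.

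The main obstacle is the passage from $\GL_4$-valued to $\GSp_4$-valued, more precisely ruling out the orthogonal alternative. The pole of $L^{\rmS}(s, \Pi, \wedge^2 \otimes \psi^{-1})$ forces the pairing to be alternating rather than symmetric, but to deduce this cleanly one needs to exclude the possibility that $R_l(\Pi)$ preserves a symmetric pairing (which would arise from a pole of the symmetric square $L$-function); this requires comparing the $\wedge^2$ and $\mathrm{Sym}^2$ $L$-functions via the factorization $L(s, \Pi \otimes \Pi^{\vee}) = L(s, \Pi, \wedge^2 \otimes \psi^{-1}) L(s, \Pi, \mathrm{Sym}^2 \otimes \psi^{-1})$ and using the simple pole of the left-hand side at $s=1$ together with cuspidality of $\Pi$. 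The secondary obstacle is the verification that all factors behave correctly in the presence of Hecke characters and endoscopy; this is precisely the content of the Mok--S{\o}rensen theorems being cited.
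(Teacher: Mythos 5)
The paper offers no proof of this theorem, only a remark attributing it to the literature: the Langlands--Kottwitz construction over $\QQ$, the strong transfer from $\GSp_{4}$ to $\GL_{4}$ combined with the RAESDC Galois-representation machinery over general totally real fields, local--global compatibility due to Sorensen and Mok, and purity due to Caraiani. Your sketch reconstructs exactly that second route (transfer to $\GL_{4}$, descent of the pairing to a symplectic one via the pole of the exterior-square $L$-function, and Caraiani's results at $v\mid l$ and for purity), so it is correct and follows essentially the same approach the paper points to.
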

\begin{remark}
The construction of the Galois representation in the case $\rmF=\QQ$ is constructed in \cite{La-Siegel, La-SiegelII, Wei-Galois, Tay-Siegel} using the Langlands-Kottwitz method. The statements about local-global compatibilities are mostly proved in \cite{Sor-local-global} and completed in \cite{Mok-GSp}. These works also construct the Galois representations in the totally real field case by using the strong transfer of cuspidal automorphic representation from $\GSp_{4}$ to $\GL_{4}$. The purity statement in the last point follows from the work of Carianni \cite{Carai}. 
\end{remark}

\begin{definition}\label{strong-field}
Let $\pi$ be a cuspidal automorphic representation of $\GSp_{4}(\mathbb{A}_{\rmF})$. We say a number field $E\subset \CC$ is a strong coefficient field of $\pi$ if for every prime $\lambda$ of $E$ there exits a continuous homomorphism 
\begin{equation*}
\rho_{\pi,\lambda}: G_{\rmF}\rightarrow \GSp_{4}(E_{\lambda})
\end{equation*}
up to conjugation, such that for every isomorphism $\iota_{l}:\CC\xrightarrow{\sim}\overline{\QQ}_{l}$ inducing the prime $\lambda$, the representations $\rho_{\pi,\lambda}\otimes_{E_{\lambda}}\overline{\QQ}_{l}$ and $\rho_{\pi,\iota}$ are conjugate.

\end{definition}

We fix such a strong coefficient field $E$ for $\pi$.  We assume that $\rho_{\pi, \iota_{l}}$ is absolutely irreducible and so is $\rho_{\pi,\lambda}$. Therefore we can consider the residual Galois representation
\begin{equation*}
\overline{\rho}_{\pi, \lambda}: G_{\rmF}\rightarrow \GSp_{4}(k).
\end{equation*}
Let $\Sigma$ be the smallest set containing $\Sigma_{\mathrm{bad}}$ such that $\pi$ is unramified for every non-archimedean place $v$ of $\rmF$
not in it. We will establish that $\overline{\rho}_{\pi, \lambda}$  is rigid under suitable hypothesis. Before that we need the following lemma.
\begin{lemma}\label{bernstein}
Let $v$ be a non-archimedean place of $\rmF$ and suppose that $l\nmid\prod^{4}_{i=1} q^{i}_{v}-1$. Let $A\in \CNL_{\calO}$ and $\rho_{1}$ and $\rho_{2}$ are two liftings of $\overline{\rho}_{\pi,\lambda}$ valued in $A$. If $\pi_{1, v}$ and $\pi_{2, v}$ are two irreducible admissible representation of $\GSp_{4}(\rmF_{v})$ giving the the representations $\rho_{1, v}\otimes\overline{\QQ}_{l}$ and $\rho_{2, v}\otimes\overline{\QQ}_{l}$ via the local Langlands correspondence. Then $\pi_{1, v}$ and $\pi_{2,v}$ are in the same Bernstein component. 
\end{lemma}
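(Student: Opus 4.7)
Proof plan. The approach is to identify the Bernstein component with the inertial type of the local $L$-parameter, and then to use rigidity of the inertial type among lifts of $\overline{\rho}_{\pi,\lambda}|_{\rmG_{\rmF_{v}}}$ under the numerical hypothesis on $q_{v}$. Via the local Langlands correspondence for $\GSp_{4}$ of Gan--Takeda, two irreducible admissible representations of $\GSp_{4}(\rmF_{v})$ lie in the same Bernstein component if and only if their Frobenius-semisimplified Weil--Deligne parameters have isomorphic restrictions to $\rmI_{\rmF_{v}}$. It therefore suffices to show that the semisimple $\rmI_{\rmF_{v}}$-actions on $\rho_{1,v}\otimes_{A}\overline{\QQ}_{l}$ and $\rho_{2,v}\otimes_{A}\overline{\QQ}_{l}$ are isomorphic.

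I would split the analysis into a wild and a tame part. For the wild inertia $\rmP_{\rmF_{v}}$, Lemma \ref{wild-rep} asserts that each irreducible $k$-representation of $\rmP_{\rmF_{v}}$ admits a unique lift to $\calO$, so applying the isotypic decomposition of $\overline{\rho}_{\pi,\lambda}|_{\rmP_{\rmF_{v}}}$ recalled in the discussion preceding Definition \ref{min-ram} produces a canonical $\rmG_{\rmF_{v}}$-stable decomposition of each $\rho_{i,v}$; in particular $\rho_{1,v}|_{\rmP_{\rmF_{v}}}\cong\rho_{2,v}|_{\rmP_{\rmF_{v}}}$. For the tame part, Grothendieck's monodromy theorem implies that the action of $\rmI_{\rmF_{v}}$ on the semisimplification factors through a finite quotient. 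Decomposing the tame quotient into its pro-$l$ part $\ZZ_{l}(1)$ and its prime-to-$l$ part, a topological generator $t_{l}$ of $\ZZ_{l}(1)$ acts on the semisimplification with eigenvalues that are $l$-power roots of unity $\zeta$, which Frobenius permutes in orbits of size $s\leq 4$ satisfying $\zeta^{q_{v}^{s}-1}=1$. The hypothesis $l\nmid\prod_{i=1}^{4}(q_{v}^{i}-1)$ then forces $\zeta=1$, so the pro-$l$ tame inertia acts trivially on both semisimplifications. The complementary prime-to-$l$ tame part factors through a finite group of order coprime to $l$, and by a Schur--Zassenhaus rigidity argument (parallel to the uniqueness in Lemma \ref{wild-rep}) its action is determined by its mod-$\lambda$ reduction and hence coincides for $\rho_{1,v}$ and $\rho_{2,v}$.

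Combining these three inputs identifies the two semisimple inertial representations, which proves the lemma. The principal obstacle I expect is the reduction in the first paragraph: rigorously invoking that Bernstein components of $\GSp_{4}(\rmF_{v})$ are classified by semisimplified inertial $L$-parameters requires compatibility of Gan--Takeda's local Langlands correspondence with the Bernstein decomposition, which must be extracted from the Bernstein-center literature (or verified by parabolic-induction analysis, since the $\GSp_{4}$ case has no elliptic $L$-packet ambiguity affecting cuspidal support). Once that reduction is in place, the remaining numerical step on pro-$l$ tame inertia is then elementary.
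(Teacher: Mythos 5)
Your reduction in the first paragraph contains a genuine gap, and it is not one that can be patched by ``extracting compatibility from the Bernstein-center literature'': the biconditional you assert is \emph{false} for $\GSp_{4}$. Two irreducible admissible representations of $\GSp_{4}(\rmF_{v})$ with the same $L$-parameter (hence a fortiori the same semisimplified inertial restriction) need not lie in the same Bernstein component, because Bernstein components are indexed by inertial classes of supercuspidal supports, not by inertial classes of parameters, and the Gan--Takeda $L$-packets for $\GSp_{4}$ can mix supercuspidal and non-supercuspidal members. Concretely, in the Sally--Tadi\'{c}/Schmidt classification the packets of type $\mathrm{Va}/\mathrm{Va}^{*}$ and $\mathrm{XIa}/\mathrm{XIa}^{*}$ each contain a non-supercuspidal essentially discrete series representation and a supercuspidal representation with the \emph{same} parameter; these lie in different Bernstein components. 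So the direction you need --- isomorphic semisimplified inertial parameters implies same component --- fails, and your entire strategy of working with inertial types of $\GSp_{4}$-parameters cannot by itself yield the conclusion. (Your subsequent computation that the two semisimplified inertial representations agree --- wild part via the unique lifting of Lemma \ref{wild-rep}, pro-$l$ tame part via the eigenvalue/orbit argument using $l\nmid\prod_{i=1}^{4}(q_{v}^{i}-1)$ --- is essentially correct, though note that $\rmP_{\rmF_{v}}$ in this paper is already the full maximal prime-to-$l$ subgroup of inertia, so Lemma \ref{wild-rep} covers your ``prime-to-$l$ tame'' case as well and the vague Schur--Zassenhaus step is unnecessary.)

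The paper sidesteps the packet issue by a different route: it transfers $\pi_{1,v}$ and $\pi_{2,v}$ to representations $\Pi_{1,v},\Pi_{2,v}$ of $\GL_{4}(\rmF_{v})$, where the dictionary ``same Bernstein component $\Leftrightarrow$ same inertial class of supercuspidal support $\Leftrightarrow$ same semisimplified inertial parameter'' is actually valid, invokes \cite[Corollary 3.4.10]{LTXZZa} (whose proof is in substance the computation you carried out) to conclude $\Pi_{1,v}$ and $\Pi_{2,v}$ lie in the same component, and then descends the conclusion to $\GSp_{4}$. If you want to salvage your write-up, the correct fix is to insert this transfer to $\GL_{4}$ before comparing inertial parameters, and then supply (or cite) an argument that the transfer reflects Bernstein components in the situation at hand --- a point the paper itself treats briefly but which in the intended application is controlled because the relevant representations are of general type.
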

\begin{proof}
Let $\Pi_{1, v}$ and $\Pi_{2, v}$ be the transfer of $\pi_{1, v}$ and $\pi_{2, v}$ as representations of $\GL_{4}(\rmF_{v})$. Then $\Pi_{1, v}$ and $\Pi_{2, v}$ lie on the same Bernstein component by \cite[Corollary 3.4.10]{LTXZZa}. It then follows that $\pi_{1, v}$ and $\pi_{2, v}$ are in the same Bernstein component as well.
\end{proof}

\begin{theorem}
Suppose $\pi$ is a cuspidal automorphic representation of $\GSp_{4}(\mathbb{A}_{\rmF})$ with trivial central character of weight $(k_{v}, l_{v})_{v\mid\infty}$ where $k_{v}, l_{v}\geq 3$ and $k_{v}\equiv l_{v}\mod 2$ for all $v\mid \infty$. We assume that the image $\overline{\rho}_{\pi, \lambda}(G_{\rmF})$ contains $\GSp_{4}(\FF_{l})$ for sufficiently large $l$. Then the representation $\overline{\rho}_{\pi, \lambda}$ is rigid for the pair $(\Sigma,\emptyset)$ for infinitely many choices of $l$.
\end{theorem}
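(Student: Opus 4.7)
The plan is to verify one by one the four conditions of Definition \ref{rigid} for the pair $(\Sigma,\emptyset)$: minimal ramification at $v\in\Sigma$, the vacuous level-raising condition, regular Fontaine--Laffaille crystallinity at $v\in\Sigma_l$, and unramifiedness at all remaining places. I arrange this after imposing finitely many conditions on $l$, which leaves infinitely many primes.

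First I would choose $l$ satisfying: $l\geq 5$ is unramified in $\rmF$; $l\geq k_v+l_v-1$ for all $v\mid\infty$; $l\nmid\prod_{i=1}^{4}(q_v^i-1)$ for every $v\in\Sigma$; $l$ exceeds the norms of all $v\in\Sigma$, so that $\Sigma_l\cap\Sigma=\emptyset$; and $\overline{\rho}_{\pi,\lambda}(G_{\rmF})\supseteq\GSp_4(\FF_l)$ (which holds for sufficiently large $l$ by hypothesis). Each of these excludes only finitely many primes, so infinitely many $l$ satisfy all of them simultaneously. For any such $l$, condition $(4)$ of Definition \ref{rigid} is immediate from the minimality of $\Sigma$, and condition $(2)$ is vacuous.

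For condition $(3)$, fix $v\in\Sigma_l$; since $v\notin\Sigma$, $\pi$ is unramified at $v$ and Theorem \ref{Galois}(3) asserts that $\rho_{\pi,\lambda}\vert_{G_{\rmF_v}}$ is crystalline with the four Hodge--Tate weights listed there. Since $k_v,l_v\geq 3$ and $k_v\equiv l_v\pmod 2$, these weights are distinct and their spread equals $k_v+l_v-3\leq l-2$. After the twist making the similitude character $\epsilon_l^{-3}$, the resulting Fontaine--Laffaille weights lie in an interval $[a,b]$ with $b-a\leq l-2$; regularity is automatic and the symplectic pairing descends from $\rho_{\pi,\lambda}$. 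This is exactly Definition \ref{FL}.

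The main step is condition $(1)$. Fix $v\in\Sigma$ and any lift $\rho_v:G_{\rmF_v}\to\GSp_4(A)$ of $\overline{\rho}_{\pi,\lambda}\vert_{G_{\rmF_v}}$, and compare it to the reference lift $\rho_{\pi,\lambda}\vert_{G_{\rmF_v}}$ itself. Lemma \ref{bernstein} applies because of our choice $l\nmid\prod_{i=1}^{4}(q_v^i-1)$, so each representation $\pi_v'$ attached to a characteristic-zero specialization of $\rho_v$ lies in the same Bernstein component as $\pi_v$. By Theorem \ref{Galois}(4) and the general-type assumption, $\pi$ is tempered, hence $\pi_v$ is tempered, so its $L$-parameter is pinned down up to a maximally generic nilpotent monodromy inside its inertial type. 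Decomposing the underlying $A$-module of $\rho_v$ according to the wild-inertia isotypic pieces $\rmM_\theta$ and $\rmM_{\theta\oplus\theta^\ast}$ introduced before Definition \ref{min-ram} and applying Lemma \ref{decomp} on each piece, one reduces to a tame $\GSp_2$- or $\GL_2$-type local problem in which the component of $\Spf R_v^\square$ cut out by that Bernstein data is precisely the minimally ramified component of Proposition \ref{min-dim}. Hence $\rho_v$ satisfies Definition \ref{min-ram}, proving condition $(1)$ and therefore rigidity.

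The hard part is the structural claim at the end of the previous paragraph: that the Bernstein component of the tempered representation $\pi_v$ corresponds exactly to the minimally ramified irreducible component of $\Spf R_v^\square$, and not to some other component produced by Proposition \ref{min-dim}. I would prove this in parallel with \cite[Proposition 3.4.10]{LTXZZa}, using that a tempered representation sits at the generic point of its Bernstein component; on the Galois side this forces the image of a topological generator of tame inertia to fill out the expected nilpotent orbit, which is precisely Definition \ref{min-tame}. The argument must be carried out separately in each of the three wild-type cases $\calT_1,\calT_2,\calT_3$ and for all finitely many $v\in\Sigma$; once done, combining with the verifications above gives rigidity of $\overline{\rho}_{\pi,\lambda}$ for $(\Sigma,\emptyset)$ for all $l$ in our cofinite set, hence for infinitely many $l$.
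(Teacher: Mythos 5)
Your verification of conditions (2), (3) and (4) of Definition \ref{rigid}, and your choice of the finitely many congruence/size conditions on $l$, match the paper and are fine. The problem is your treatment of condition (1), which is the entire content of the theorem, and there your argument has a genuine gap: it is purely local, and condition (1) cannot be established purely locally. Rigidity at $v\in\Sigma$ demands that \emph{every} lift $\rho_v$ of $\overline{\rho}_{\pi,\lambda}\vert_{G_{\rmF_v}}$ to every $A\in\CNL_{\calO}$ be minimally ramified, i.e.\ that $\rmR^{\square}_v=\rmR^{\min}_v$, so that the local lifting ring has a single irreducible component. Knowing (via Lemma \ref{bernstein}) that all characteristic-zero specializations of lifts land in the same Bernstein component does not force this: a single Bernstein component can, and in general does, meet several irreducible components of $\Spf\,\rmR^{\square}_v$ (the unramified versus Steinberg-type dichotomy is the standard example). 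Moreover, the temperedness/purity/genericity input you invoke applies only to $\pi_v$ itself; an arbitrary lift $\rho_v$ is not attached to any tempered automorphic representation, so "a tempered representation sits at the generic point of its Bernstein component" tells you which component $\pi_v$ lies on, but says nothing about the other components of $\rmR^{\square}_v$ that you need to rule out. In effect you assume the conclusion — that the minimal component is the only one.

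The paper closes this gap with a global argument. For \emph{each} choice $\calD_\Sigma$ of irreducible components of the local lifting rings at $v\in\Sigma$ one forms the global deformation problem $\calS(\calD_\Sigma)$ and invokes \cite[Corollary 7.5.1]{GG-companion} to see that $\rmR^{\univ}_{\calS(\calD_\Sigma)}$ is finite over $\calO$ of positive rank; a $\overline{\QQ}_l$-point then produces an automorphic representation $\pi(\calD_\Sigma)$ of general type, unramified outside $\Sigma$, of bounded level (this is where Lemma \ref{bernstein} is actually used: same Bernstein component implies same level) and cohomological weight, and residually isomorphic to $\overline{\rho}_{\pi,\lambda}$. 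There are only finitely many such representations, so by strong multiplicity one, for $l$ large one must have $\pi(\calD_\Sigma)\cong\pi$ for every choice of $\calD_\Sigma$. Since $\pi_v$ is a pure, hence smooth, point of $\Spec\,\rmR^{\square}_v[1/l]$ by \cite[Lemma 7.1.3]{BCGP}, it lies on a unique component, which forces all the components $\calD_v$ to coincide with $\calD^{\min}_v$. This global "populate every component with an automorphic point, then count" step is the missing idea in your proposal; without it, or some substitute for it, condition (1) does not follow.
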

\begin{proof}
To establish that $\overline{\rho}_{\pi, \lambda}$ is rigid, we need to establish the four conditions in Definition \ref{rigid}. We begin by choose $l$ such that 
\begin{enumerate}
\item $\Sigma_{l}\cap\Sigma=\emptyset$;
\item $l\geq k_{v}+l_{v}-1$ for all $v\mid \infty$;
\item $l\geq q^{4}_{v}$ for all $v\in\Sigma$.
\end{enumerate}
The all four conditions but $(1)$ is clear for $\overline{\rho}_{\pi, \lambda}$ by Theorem \ref{Galois}. We need to verify $(1)$ for it.  Let $\calD_{\Sigma}$ be a collection $\{\calD_{v}, v\in\Sigma\}$ be a collection of local deformation problems, each of which is an irreducible component of $\rmR^{\square}_{v}$ for $\overline{\rho}_{\pi, \lambda}\vert_{G_{\rmF_{v}}}$ for $v\in\Sigma$. Consider the following global deformation problem
\begin{equation*}
\calS(\calD_{\Sigma})=(\overline{\rho}_{{\pi},\lambda}, \epsilon^{-3}_{l}, \Sigma\cup\Sigma_{l}, \{\calD_{v}\}_{v\in\Sigma\cup\Sigma_{l}})
\end{equation*}
where $\calD_{v}\in\calD_{\Sigma}$ for $v\in\Sigma$ and $\calD_{v}=\calD^{\mathrm{FL}}_{v}$ for $v\in\Sigma_{l}$. By \cite[Corollary 7.5.1]{GG-companion}, the universal deformation ring $\rmR^{\mathrm{univ}}_{\calS(\calD_{\Sigma})}$ is finite over $\calO$ of rank at least $1$. In particular, we know $\rmR^{\mathrm{univ}}_{\calS(\calD_{\Sigma})}[1/l]$ is non-zero.  A $\overline{\QQ}_{l}$ point of $\Spec\phantom{.}\rmR^{\mathrm{univ}}_{\calS(\calD_{\Sigma})}[1/l]$ gives rise to cuspidal representation $\pi(\calD_{\Sigma})$ of $\GSp_{4}(\mathbb{A}_{\rmF})$ such that
\begin{itemize}
\item $\pi(\calD_{\Sigma})$ is unramified away from $\Sigma$;
\item There is an open compact subgroup $\rmK_{v}=\rmK_{v}(\pi_{v})$ depending only on $\pi_{v}$ such that $\pi(\calD_{\Sigma})^{\rmK_{v}}_{v}\neq 0$;
\item The weights $(k_{v}, l_{v})_{v\mid\infty}$ are cohomological in the Fontaine-Laffaille range: $k_{v}\geq l_{v}\geq 3$ and $k_{v}+l_{v}\leq l+1$.
\item The representations $\rho_{\pi(\calD_{\Sigma}),\iota_{l}}$ and $\rho_{\pi, \lambda}$ are residually isomorphic. 
\end{itemize}
Note the second property is a direct consequence of Lemma \ref{bernstein} and the fact that representations on the same Bernstein component has the same level. It is clear that there are only finitely many cuspidal representations of $\GSp_{4}(\mathbb{A}_{\rmF})$ of general type up to isomorphism satisfying the first three properties. Finally by strong multiplicity one for representations of general type, we can choose $l$ large enough so that $\pi$ itself is the unique cuspidal representations of $\GSp_{4}(\mathbb{A}_{\rmF})$ of general type satisfying all $4$ properties.

Suppose we have two different collections $\calD_{\Sigma}$ and $\calD^{\bullet}_{\Sigma}$ of local deformation problems. We need to show that $\pi(\calD_{\Sigma})$ and $\pi(\calD^{\bullet}_{\Sigma})$ are not isomorphic. For this, we will show that $\pi_{v}$ for each $v\in\Sigma$ gives rise to a smooth point on $\Spec\phantom{.}\rmR^{\square}_{v}[1/l]$ and hence can not lie on two different irreducible components. This follows from \cite[Lemma 7.1.3]{BCGP} in the usual manner as $\pi_{v}$ gives rise to a pure local Galois representation. It follows that if $l$ is large enough as above, then $\calD_{v}=\calD^{\mathrm{min}}_{v}$ for all $v\in\Sigma$ and hence $(1)$ of Definition \ref{rigid} is verified. Thus $\overline{\rho}_{\pi, \lambda}$ is rigid. 
\end{proof}

\end{document}